\numberwithin{equation}{section}
\def\Ext{\mbox{\rm Ext}\,} \def\Hom{\mbox{\rm Hom}} \def\dim{\mbox{\rm dim}} \def\Iso{\mbox{\rm Iso}}
\def\lr#1{\langle #1\rangle}    
   \def\im{\mbox{\rm Im}\,} 
\def\zsum{\sum\limits_{i\in\mathbb{Z}_m}}\def\osum{\bigoplus\limits_{i\in\mathbb{Z}_m}}\def\zprod{\prod\limits_{i\in\mathbb{Z}_m}}
\def\Aut{\mbox{\rm Aut}}\def\A{\mathcal{A}\,} \def\E{\mathcal{E}\,}
\theoremstyle{plain}
\newtheorem{theorem}{\bf Theorem}[section]
\newtheorem{lemma}[theorem]{\bf Lemma}
\newtheorem{corollary}[theorem]{\bf Corollary}
\newtheorem{proposition}[theorem]{\bf Proposition}
\theoremstyle{definition}
\newtheorem{definition}[theorem]{\bf Definition}
\newtheorem{remark}[theorem]{\bf Remark}
\newtheorem{example}[theorem]{\bf Example}
\newcommand{\bt}{\begin{theorem}}
\newcommand{\et}{\end{theorem}}
\newcommand{\bl}{\begin{lemma}}
\newcommand{\el}{\end{lemma}}
\newcommand{\bd}{\begin{definition}}
\newcommand{\ed}{\end{definition}}
\newcommand{\bc}{\begin{corollary}}
\newcommand{\ec}{\end{corollary}}
\newcommand{\bp}{\begin{proof}}
\newcommand{\ep}{\end{proof}}
\newcommand{\bx}{\begin{example}}
\newcommand{\ex}{\end{example}}
\newcommand{\br}{\begin{remark}}
\newcommand{\er}{\end{remark}}
\newcommand{\be}{\begin{equation}}
\newcommand{\ee}{\end{equation}}
\newcommand{\ba}{\begin{align}}
\newcommand{\ea}{\end{align}}
\newcommand{\bn}{\begin{enumerate}}
\newcommand{\en}{\end{enumerate}}
\newcommand{\bcs}{\begin{cases}}
\newcommand{\ecs}{\end{cases}}
\renewcommand{\section}{\@startsection{section}{1}{0mm}
  {-\baselineskip}{0.5\baselineskip}{\bf\leftline}}
\begin{document}

\title[Periodic derived Hall algebras of hereditary abelian categories]{Periodic derived Hall algebras of\\ hereditary abelian categories}
\author{Haicheng Zhang}
\address{Institute of Mathematics, School of Mathematical Sciences, Nanjing Normal University,
 Nanjing 210023, P. R. China.\endgraf}
\email{zhanghc@njnu.edu.cn}

\subjclass[2010]{17B37, 18E10, 16E60.}
\keywords{Derived Hall algebras; Periodic complexes; Bridgeland's Hall algebras.}

\begin{abstract}
Let $m$ be a positive integer and $D_m(\mathcal {A})$ be the $m$-periodic derived category of a finitary hereditary abelian category $\A$. Applying the derived Hall numbers of the bounded derived category $D^b(\mathcal {A})$, we define an $m$-periodic extended derived Hall algebra for $D_m(\mathcal {A})$, and use it to give a global, unified and explicit characterization for the algebra structure of Bridgeland's Hall algebra of periodic complexes. Moreover, we also provide an explicit characterization for the odd periodic derived Hall algebra of $\A$ defined by Xu-Chen \cite{XuChen}.
\end{abstract}

\maketitle

\section{Introduction}
Hall algebras provide a good framework for additive categorification of Lie algebras, quantum groups and (quantum) cluster algebras. Ringel \cite{R90} used the Hall algebra of a representation-finite hereditary algebra to give a realization of the positive part of the corresponding quantum group. Green \cite{Gr95} generalized this result to Kac--Moody type, and gave a bialgebra structure on the Ringel--Hall algebra of hereditary algebras, whose antipode was provided by Xiao \cite{Xiao}. Hence, the extended Ringel--Hall algebra of hereditary algebras is a Hopf algebra and
Xiao \cite{Xiao} obtained a realization of the whole quantum group by constructing the Drinfeld double of the extended Ringel--Hall algebras.

In order to give an intrinsic realization of the entire quantum group via Hall algebra approach, one tried to define a Hall algebra for triangulated categories, in particular, for root categories.
Kapranov \cite{Kap} defined an associative algebra called lattice
algebra for the bounded derived category of a hereditary
abelian category. This algebra provides a Heisenberg double of the Borel part of the quantum group, which is closely related to the whole quantum group but does
not coincide with it.
In 2006, To\"en \cite{Toen} defined a Hall algebra, called derived Hall algebra, for a differential graded category satisfying some finiteness conditions.
Later on, Xiao and Xu \cite{XiaoXu} generalised To\"en's construction to any triangulated category satisfying certain homological finiteness conditions.
Unfortunately, the finiteness conditions one imposes on a triangulated category in order to define its derived
Hall algebra are quite restrictive: they do hold for bounded derived categories of finitary abelian categories, but they are not satisfied for periodic triangulated
categories. Therefore, none of these techniques can give a satisfactory construction of the whole quantum group as a Hall algebra of some kind.

In 2011, Xu and Chen \cite{XuChen} revised the construction in \cite{XiaoXu} and defined the Hall algebra of odd periodic triangulated categories. It is a pity that their proof approach can not be applied to define the Hall algebra of even periodic triangulated categories. One still expects to give a realization of the whole quantum group via a certain Hall algebra of root categories.

Instead of the Hall algebra of root categories, Bridgeland \cite{Br} considered the Ringel--Hall algebra of the category of $2$-periodic complexes of projective modules over a hereditary algebra $A$. By taking some localizations and reductions with respect to contractible complexes, he obtained a realization of the entire quantum group via such localized Hall algebra, called Bridgeland's Hall algebra of $A$, which was proved by Yanagida \cite{Yan} to be isomorphic to the Drinfeld double Hall algebra of $A$.

Let $\A$ be a finitary hereditary abelian category. Recently, by analysing the relations between the extensions in the bounded derived category $D^b(\A)$ and the root category $\mathcal {R}(\A)$, the author \cite{Zhang} defined a Hall algebra for the root category $\mathcal {R}(\A)$
by applying the (dual) derived Hall numbers of $D^b(\A)$, and proved that it is isomorphic to the Drinfeld double Hall algebra of $\A$, and thus it can be used to provide a global realization of the corresponding quantum group.

In this paper, let $m$ be a positive integer and $\A$ be a finitary hereditary abelian category. We generalise the constructions in \cite{Zhang} to the $m$-periodic derived category $D_m(\A)$. Explicitly, we define an $m$-periodic extended derived Hall algebra for $D_m(\mathcal {A})$ by applying the (dual) derived Hall numbers of the bounded derived category $D^b(\mathcal {A})$, and prove that it is isomorphic to Bridgeland's Hall algebra of $m$-periodic complexes associated to $\A$.
Moreover, we also introduce an $m$-periodic derived Hall algebra for odd periodic derived categories, which is proved to be isomorphic to the odd periodic derived Hall algebra of $\A$ defined by Xu-Chen \cite{XuChen}. Compared with $m$-periodic extended derived Hall algebras, the odd periodic derived Hall algebras can be defined without appending the $K$-elements in the basis elements.

The paper is organized as follows: we recall the definitions and properties of Ringel--Hall algebra and derived Hall algebra of a hereditary abelian category $\A$ in Section 2. In Section 3, we define an extended derived Hall algebra for the $m$-periodic derived category $D_m(\A)$ and prove its associativity. Section 4 is devoted to defining a Hall algebra without appending the $K$-elements in the basis elements for the odd periodic derived categories, and comparing it with the odd periodic derived Hall algebra given by Xu-Chen \cite{XuChen}. In Section 5, we prove that the $m$-periodic extended derived Hall algebra is isomorphic to  Bridgeland's Hall algebra of $m$-periodic complexes.

Throughout the paper, $m$ is a positive integer, $\mathbb{F}_q$ is a finite field with $q$ elements and $v=\sqrt{q}\in\mathbb{C}$, $\A$ is an essentially small hereditary abelian $\mathbb{F}_q$-linear category, and $D^b(\mathcal {A})$ is the bounded derived category of $\A$ with the shift functor $[1]$. We always assume that $\A$ is finitary, i.e., for any objects $M,N\in\A$, the spaces $\Hom_{\A}(M,N)$ and $\Ext^1_{\A}(M,N)$ are both finite dimensional. Let $K(\mathcal{A})$ be the Grothendieck group of $\A$, we denote by $\hat{M}$ the image of $M$ in $K(\mathcal{A})$ for any $M\in\A$. For a finite set $S$, we denote by $|S|$ its cardinality. For an essentially small finitary category $\mathcal {E}$, we denote by ${\rm Iso}(\mathcal {E})$ the set of isomorphism classes $[X]$ of objects $X$ in $\mathcal {E}$; for each object $X\in\mathcal {E}$, denote by $\Aut_\mathcal {E} (X)$ the automorphism group of $X$. We denote the quotient ring $\mathbb{Z}/m\mathbb{Z}$ by $\mathbb{Z}_m=\{0,1,\ldots,m-1\}$, and set $\prod\limits_{i\in\mathbb{Z}_m}a_i:=
a_0 a_1\cdots a_{m-1}$. Let $G$ be a group acting on a set $X$, the set consisting of $G$-orbits is denoted by $\frac{X}{G}$.

\section{Preliminaries}
In this section, we recall the definitions and properties of the Ringel--Hall algebra, derived Hall algebra of $\A$.
\subsection{Ringel--Hall algebras}
For any objects $L,M,N \in \mathcal{A}$, we denote by $\Ext_\mathcal{A}^1(M,N)_L$ the subset of $\Ext_\mathcal{A}^1(M,N)$, which is consisting of the equivalence classes of short exact sequences with middle term $L$.
\begin{definition}
The \emph{Hall algebra} $\mathcal {H}(\mathcal{A})$ of $\mathcal{A}$ is the $\mathbb{C}$-vector space with the basis $\{u_{[M]}~|~[M]\in \Iso(\mathcal{A}$)\}, and with the multiplication defined by
\begin{equation}\label{jhallm}
u_{[M]} \diamond u_{[N]} = \sum\limits_{[L] \in {\rm Iso}(\mathcal{A})} {\frac{{|\Ext_\mathcal{A}^1{{(M,N)}_L}|}}{{|\Hom_\mathcal{A}(M,N)|}}}u_{[L]}.\end{equation}
\end{definition}
By \cite{R90a, Br}, the above operation $\diamond$ defines on $\mathcal {H}(\mathcal{A})$ the structure of a unital associative algebra, and the basis element $u_{[0]}$ is the unit.
\begin{remark}
For any objects $L,M,N\in \A$, let
$g_{M,N}^L$ be the number of subobjects $X$ of $L$ such that $X\cong N$ and $L/X\cong M$.
The Riedtmann--Peng formula (cf. \cite{Riedtmann,Peng}) states that
$$g_{M,N}^{L}=\frac{|\Ext^1_{\A}(M,N)_{L}|}{|\Hom_{\A}(M,N)|}\cdot \frac{|{\rm Aut}_{\A}(L)|}{|{\rm Aut}_{\A}(M)||{\rm Aut}_{\A}(N)|}.$$ Thus in terms of alternative generators $\mu_{[M]}=\frac{1}{|{\rm Aut}_{\A}(M)|}u_{[M]}$, the product (\ref{jhallm}) takes the form
$$\mu_{[M]}\diamond \mu_{[N]}= \sum\limits_{[L] \in {\rm Iso}(\mathcal{A})}g_{M,N}^L\mu_{[L]},$$
which is the definition used, for example, in \cite{R90a,Sc}.
\end{remark}

For any objects $M,N \in \mathcal{A}$, set $$\lr{M,N}:=\dim_k\Hom_{\A}(M,N)-\dim_k\Ext^1_{\A}(M,N),$$
and it descends to give a bilinear form
$$\lr{\cdot ,\cdot }: K(\mathcal{A})\times K(\mathcal{A})\longrightarrow \mathbb{Z},$$ known as the \emph{Euler form}. We also consider the \emph{symmetric Euler form}
$$(\cdot ,\cdot ): K(\mathcal{A})\times K(\mathcal{A})\longrightarrow \mathbb{Z},$$ defined by $(\alpha,\beta)=\lr{\alpha,\beta}+\lr{\beta,\alpha}$ for all $\alpha,\beta \in K(\mathcal{A})$.
The \emph{Ringel--Hall algebra} ${\mathcal {H}}_{\rm{tw}}(\mathcal{A})$ of $\mathcal{A}$ is the same vector space as $\mathcal {H}(\mathcal{A})$, but with the multiplication defined by $$u_{[M]}u_{[N]}=v^{\lr{\hat{M},\hat{N}}}\cdot u_{[M]}\diamond u_{[N]}.$$
The \emph{extended Ringel--Hall algebra} ${\mathcal {H}}_{\rm{tw}}^{\rm e}(\mathcal{A})$ of $\A$ is defined as an extension of ${\mathcal {H}}_{\rm{tw}}(\mathcal{A})$ by appending elements $K_{\alpha}$ for all $\alpha\in K(\A)$, and imposing relations $$K_{\alpha}K_{\beta}=K_{\alpha+\beta},\quad K_{\alpha}u_{[M]}=v^{(\alpha,\hat{M})}u_{[M]}K_{\alpha},$$ for $\alpha,\beta\in K(\A)$ and $[M]\in \Iso(\mathcal{A})$.

\subsection{Derived Hall algebras}
For any objects $M,N,X\in D^b(\A)$, set $$\{M,N\}:=\prod\limits_{i>0}|\Hom_{D^b(\A)}(M[i],N)|^{(-1)^i}.$$ In a triangulated category $\mathcal {T}$, we denote by $\Hom_{\mathcal {T}}(M,N)_X$ the subset of $\Hom_{\mathcal {T}}(M,N)$ consisting of the morphisms $M\to N$ whose cone is isomorphic to $X$. According to \cite{Toen,XiaoXu}, for any objects $X,Y,L\in D^b(\A)$, we have that
$$\frac{|\Hom_{D^b(\A)}(L,X)_{Y[1]}|}{|{\rm Aut}_{D^b(\A)}(X)|}\cdot\frac{\{L,X\}}{\{X,X\}}=
\frac{|\Hom_{D^b(\A)}(Y,L)_{X}|}{|{\rm Aut}_{D^b(\A)}(Y)|}\cdot\frac{\{Y,L\}}{\{Y,Y\}}=:F_{X,Y}^L,$$ which is called \emph{To\"en's formula}. It is easy to see that for any $X,Y,L\in\A$ we have that $F_{X,Y}^L=g_{X,Y}^L$.

\begin{definition}
The \emph{derived Hall algebra} $\mathcal {D}\mathcal {H}(\A)$ of $\A$ is the $\mathbb{C}$-vector space with the basis $\{\mu_{[X]}~|~[X]\in {\rm Iso}(D^b(\A))\}$, and with the multiplication defined by
\begin{equation}\label{dhallm}
\mu_{[X]} \mu_{[Y]}=\sum\limits_{[L]\in {\rm Iso}(D^b(\A))}F_{X,Y}^L\mu_{[L]}.\end{equation}
\end{definition}

By \cite{Toen,XiaoXu}, we know that $\mathcal {D}\mathcal {H}(\A)$ is an associative and unital algebra. The derived Riedtmann--Peng formula (cf. \cite{XiaoXu2,WWZ}) states that
\begin{equation}\label{drpgs1}
F_{X,Y}^L=\frac{|\Ext^1_{D^b(\A)}(X,Y)_{L}|}{|\Hom_{D^b(\A)}(X,Y)|}\cdot\frac{1}{\{X,Y\}}\cdot\frac{|{\rm Aut}_{D^b(\A)}(L)|}{|{\rm Aut}_{D^b(\A)}(X)||{\rm Aut}_{D^b(\A)}(Y)|}\cdot\frac{\{L,L\}}{\{X,X\}\{Y,Y\}},
\end{equation}
where $\Ext^1_{D^b(\A)}(X,Y)_{L}:=\Hom_{D^b(\A)}(X,Y[1])_{L[1]}$. Thus in terms of alternative generators $u_{[X]}=\{X,X\}\cdot |{\rm Aut}_{D^b(\A)}(X)|\cdot\mu_{[X]}$, the product (\ref{dhallm}) takes the form
$$u_{[X]} u_{[Y]}=\sum\limits_{[L]\in {\rm Iso}(D^b(\A))}H_{X,Y}^Lu_{[L]},$$ where
$$H_{X,Y}^L=\frac{|\Ext^1_{D^b(\A)}(X,Y)_{L}|}{|\Hom_{D^b(\A)}(X,Y)|}\cdot\frac{1}{\{X,Y\}}.$$

For any objects $X_1,X_2,\cdots,X_t,L$ in $D^b(\A)$, define $F_{X_1,X_2,\cdots,X_t}^L$ to be the number such that
$$\mu_{[X_1]}\mu_{[X_2]}\cdots \mu_{[X_t]}=\sum\limits_{[L]\in {\rm Iso}(D^b(\A))}F_{X_1,X_2,\cdots,X_t}^L\mu_{[L]}.$$
By the associativity of the {derived Hall algebra} $\mathcal {D}\mathcal {H}(\A)$, we have that
\begin{equation}\label{associativity}
\begin{split}
F_{X_1,X_2,\cdots,X_t}^L&=\sum\limits_{[X]\in {\rm Iso}(D^b(\A))}F_{X_1,\cdots,X_{i-1},X}^LF_{X_i,\cdots,X_{t}}^X\\
&=\sum\limits_{[X']\in {\rm Iso}(D^b(\A))}F_{X_1,\cdots,X_{i}}^{X'}F_{X',X_{i+1},\cdots,X_{t}}^L\end{split}\end{equation} for each $1< i< t$.

By abuse of notation, in what follows, for each object $X$, we may also write $u_X$ for $u_{[X]}$.
For any $X\in\A$, we set $a_X=|{\rm Aut}_{\A}(X)|$. The following formulas on (derived) Hall numbers are needed in the sequel.


\begin{theorem}{\rm(\textbf{Green's formula},\cite{Gr95})}
Given objects $M,N,M',N'\in\A$, we have the following formula
\begin{equation*}
\begin{split}
&a_Ma_Na_{M'}a_{N'}\sum\limits_{[L]\in{\rm Iso}(\A)}F_{M,N}^LF_{M',N'}^L\frac{1}{a_L}\\&=
\sum\limits_{[A],[A'],[B],[B']\in{\rm Iso}(\A)}q^{-\lr{\hat{A},\hat{B}'}}
F_{A,A'}^{M}F_{B,B'}^{N}F_{A,B}^{M'}F_{A',B'}^{N'}a_{A}a_{A'}a_{B}a_{B'}.
\end{split}
\end{equation*}
\end{theorem}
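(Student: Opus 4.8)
The plan is to prove the stated identity by counting, in two ways, the groupoid of configurations given by an object equipped with a pair of subobjects, organised through a $3\times3$ commutative diagram with short exact rows and columns. Throughout I use that on $\A$ one has $F_{X,Y}^L=g_{X,Y}^L$, the number of subobjects $Y_0\subseteq L$ with $Y_0\cong Y$ and $L/Y_0\cong X$, together with the elementary count
\be
a_Xa_Y\,g_{X,Y}^L=\#\{\,0\to Y\to L\to X\to 0\,\}
\ee
of short exact sequences with end terms $X,Y$ and middle term $L$ (choose the subobject in $g_{X,Y}^L$ ways, then the two defining isomorphisms in $a_Y$ and $a_X$ ways).

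First I would reinterpret the left-hand side. After multiplying through, the summand $a_Ma_Na_{M'}a_{N'}\,g_{M,N}^Lg_{M',N'}^L/a_L$ equals $1/a_L$ times the number of pairs consisting of a short exact sequence $0\to N\to L\to M\to 0$ and a short exact sequence $0\to N'\to L\to M'\to 0$ with the same middle term $L$. Summing over $[L]$, the left-hand side is thus the groupoid cardinality — the count weighted by $1/a_L$ under the $\Aut_{\A}(L)$-action — of objects $L$ carrying two subobjects, one isomorphic to $N$ with quotient $M$ and one isomorphic to $N'$ with quotient $M'$.

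Next, from such a pair of subobjects $N_0\cong N$ and $N_0'\cong N'$ of $L$ I would form the four subquotients
\[
B':=N_0\cap N_0',\quad B:=N_0/(N_0\cap N_0'),\quad A':=N_0'/(N_0\cap N_0'),\quad A:=L/(N_0+N_0'),
\]
which fit into the canonical $3\times3$ commutative diagram with exact rows and columns and centre $L$, whose outer frame consists of the four short exact sequences $0\to B'\to N\to B\to 0$, $0\to B'\to N'\to A'\to 0$, $0\to B\to M'\to A\to 0$ and $0\to A'\to M\to A\to 0$. These realise exactly the four Hall numbers $g_{B,B'}^N$, $g_{A',B'}^{N'}$, $g_{A,B}^{M'}$ and $g_{A,A'}^M$ appearing on the right-hand side. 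Conversely, the nine-lemma ensures that every such outer frame bounds a central configuration, so the two descriptions match at the level of diagrams; what remains is to count, for fixed isomorphism types of the corners $A,A',B,B'$, how many central configurations $(L,N_0,N_0')$ restrict to a prescribed outer frame.

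The heart of the argument, and the step I expect to be the main obstacle, is this fibre count. Gluing $L$ together with its two subobjects from a given outer frame is governed by the extension data between the two diagonally opposite corners $A$ and $B'$ — the unique pair of corners not joined by an edge of the diagram. Careful bookkeeping of the relevant $\Hom$- and $\Ext^1$-groups, using that $\A$ is hereditary so that $\Ext^{\geq2}_{\A}$ vanishes and the Euler form collapses to $\lr{\hat{A},\hat{B}'}=\dim_k\Hom_{\A}(A,B')-\dim_k\Ext^1_{\A}(A,B')$, should show that the number of completions — once divided by the automorphisms of the frame and multiplied by the corner automorphisms $a_Aa_{A'}a_Ba_{B'}$ — contributes precisely the factor $|\Ext^1_{\A}(A,B')|/|\Hom_{\A}(A,B')|=q^{-\lr{\hat{A},\hat{B}'}}$. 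Summing over all isomorphism types of $A,A',B,B'$ then assembles the right-hand side. The delicate points are to control the interaction of the corner automorphism groups with $\Aut_{\A}(L)$ and to verify that the gluing obstruction is measured by $\Ext^1_{\A}(A,B')$ rather than by any other corner pair; the remaining steps are routine orbit counting.
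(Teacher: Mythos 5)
The paper does not actually prove this statement: it is quoted verbatim as Green's theorem with a citation to \cite{Gr95}, so there is no internal proof to compare against. Judged on its own, your proposal correctly reproduces the \emph{architecture} of the classical proof (Green's original argument, as organized in Ringel's exposition): the left-hand side is the groupoid count of ``crosses'' $(L,N_0,N_0')$, the right-hand side is the groupoid count of outer ``frames'' weighted by $q^{-\lr{\hat{A},\hat{B}'}}$, the four subquotients and the four short exact sequences you write down are the right ones, and the diagonal pair $(A,B')$ is indeed the pair that governs the gluing.

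The genuine gap is that the step you yourself flag as ``the main obstacle'' is the entire content of the theorem, and you do not carry it out. The assertion that ``careful bookkeeping \ldots should show'' that the fibre of the map from crosses to frames contributes exactly $|\Ext^1_{\A}(A,B')|/|\Hom_{\A}(A,B')|$ is not a proof: establishing it requires (i) showing which frames are actually hit (your appeal to the nine-lemma for surjectivity is backwards --- the nine-lemma completes a partial $3\times 3$ diagram with compatible middle data, it does not produce a centre from the four outer sequences alone), and (ii) the delicate orbit count in which $\Hom_{\A}(A,B')$ acts on the set of completions non-freely, $\Ext^1_{\A}(A,B')$ classifies the possible centres, and the hereditary hypothesis is used to truncate the relevant long exact sequence; several nontrivial cancellations among the automorphism factors of $L$, the corners, and the edges must then be verified. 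Without this computation the identity, including the precise exponent $-\lr{\hat{A},\hat{B}'}$ rather than, say, a count involving the other corner pairs, is not established. As it stands the proposal is a correct reduction plus an unproved key lemma.
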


\begin{lemma}\label{jichuhallshu} $($\cite[Proposition 7.1]{Toen}$)$
For any objects $M,N,X,Y\in\A$, we have that
\begin{equation*}
F_{M[1],N}^{X[1]\oplus Y}=F_{M,N[-1]}^{X\oplus Y[-1]}=q^{-\lr{\hat{Y},\hat{X}}}\frac{a_Xa_Y}{a_Ma_N}\sum\limits_{[L]\in{\rm Iso}(\A)}a_LF_{L,X}^MF_{Y,L}^N.
\end{equation*}
\end{lemma}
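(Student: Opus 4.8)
The plan is to prove the identity in three stages: first establish the equality $F_{M[1],N}^{X[1]\oplus Y}=F_{M,N[-1]}^{X\oplus Y[-1]}$, then compute either side explicitly. The first equality should follow immediately from the invariance of the derived Hall numbers under the shift functor $[-1]$, since $[-1]$ is an autoequivalence of $D^b(\A)$ and all the homological data defining $F$ (the groups $\Hom_{D^b(\A)}(\cdot[i],\cdot)$, the cone, the automorphism groups) are preserved; applying $[-1]$ to the triangle $N\to X[1]\oplus Y\to M[1]\to N[1]$ gives $N[-1]\to X\oplus Y[-1]\to M\to N$, which is exactly the configuration counted by $F_{M,N[-1]}^{X\oplus Y[-1]}$.

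For the main computation I would work with $F_{M,N[-1]}^{X\oplus Y[-1]}$ and use the first form of Toën's formula, namely
\begin{equation*}
F_{M,N[-1]}^{X\oplus Y[-1]}=\frac{|\Hom_{D^b(\A)}(X\oplus Y[-1],M)_{N}|}{|\mathrm{Aut}_{D^b(\A)}(X\oplus Y[-1])|}\cdot\frac{\{X\oplus Y[-1],M\}}{\{X\oplus Y[-1],X\oplus Y[-1]\}}.
\end{equation*}
The key structural observation is that since $\A$ is hereditary, the only nonzero $\Hom$'s between shifts of objects of $\A$ live in degrees $0$ and $1$, so the multiplicative factors $\{\cdot,\cdot\}$ collapse to a single power of $q$ governed by $\mathrm{Ext}^1$ groups, i.e. to powers of $q$ read off from the Euler form. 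Concretely, $\{X\oplus Y[-1],M\}$ and the self-extension terms can each be rewritten in terms of $\dim\Hom_\A$ and $\dim\Ext^1_\A$ among $M,N,X,Y$, producing the prefactor $q^{-\lr{\hat Y,\hat X}}\frac{a_Xa_Y}{a_Ma_N}$ after the automorphism groups $\mathrm{Aut}_{D^b(\A)}(X\oplus Y[-1])$ are expanded (they contribute $a_Xa_Y$ together with an off-diagonal $\Hom$/$\Ext$ contribution from the cross terms between $X$ and $Y[-1]$).

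The heart of the argument is identifying the counting term $|\Hom_{D^b(\A)}(X\oplus Y[-1],M)_N|$ with the sum $\sum_{[L]}a_L F_{L,X}^M F_{Y,L}^N$. A morphism $X\oplus Y[-1]\to M$ is a pair $(f,g)$ with $f\colon X\to M$ and $g\colon Y[-1]\to M$, i.e. $g\in\Ext^1_\A(Y,M)$; its cone is forced to be $N$. The plan is to analyze this cone via the octahedral axiom: the component $f$ produces an object $L:=\cone(f)$ (living in $\A$ when $f$ is suitably generic, with $L$ ranging over $\mathrm{Iso}(\A)$), after which the induced map from $Y[-1]$ detects $L$ inside $N$ through a second triangle. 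Thus $f$ with $\cone L$ is counted by $F_{L,X}^M$ and the induced extension giving $N$ is counted by $F_{Y,L}^N$, and summing over the isomorphism type $[L]$ of the intermediate cone, weighted by the $a_L$ that accounts for the automorphisms of $L$ acting on the fibration of pairs, yields the stated sum. The main obstacle I anticipate is this octahedral bookkeeping: one must verify that every pair $(f,g)$ with cone $N$ factors uniquely (up to the $\mathrm{Aut}(L)$-action) through such an intermediate $L\in\A$, and that the degree reasons from heredity guarantee $L$ is honestly an object of $\A$ rather than a genuine complex, so that the derived Hall numbers reduce to the classical Hall numbers $F_{L,X}^M=g_{L,X}^M$ and $F_{Y,L}^N=g_{Y,L}^N$ on $\A$. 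Matching the remaining powers of $q$ and automorphism orders against the derived Riedtmann--Peng formula (\ref{drpgs1}) will then close the computation.
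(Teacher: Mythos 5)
The paper does not actually prove this lemma; it imports it from \cite[Proposition 7.1]{Toen}, so I am comparing your plan with the standard argument behind that citation. Your first step (shift invariance of the derived Hall numbers under $[-1]$) and your overall skeleton for the main computation (expand $\Hom_{D^b(\A)}(X\oplus Y[-1],M)$ into pairs $(f,\epsilon)$ with $f\in\Hom_{\A}(X,M)$, $\epsilon\in\Ext^1_{\A}(Y,M)$, and analyse the cone via the octahedron on $X\to X\oplus Y[-1]\to M$) are the right route. However, your starting formula is wrong: neither of the two valid forms of To\"en's formula normalizes by the \emph{extension} object. For $F_{M,N[-1]}^{X\oplus Y[-1]}$ the correct first form is $\frac{|\Hom_{D^b(\A)}(X\oplus Y[-1],M)_{N}|}{|\Aut_{D^b(\A)}(M)|}\cdot\frac{\{X\oplus Y[-1],M\}}{\{M,M\}}$, normalized by the quotient term $M$. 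With your normalization by $|\Aut_{D^b(\A)}(X\oplus Y[-1])|=a_Xa_Y\,q^{\dim\Ext^1_{\A}(Y,X)}$, the factor $a_Xa_Y$ lands in the denominator and neither $a_M$ nor $a_N$ ever appears, so your stated mechanism for producing the prefactor $q^{-\lr{\hat Y,\hat X}}a_Xa_Y/(a_Ma_N)$ cannot work; indeed your expression differs from the correct one by the factor $a_M q^{\lr{\hat Y,\hat X}}/(a_Xa_Y)$, which is not $1$ in general. This error is load-bearing in your narrative, not a typo.

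For the record, the correct bookkeeping is: $\{M,M\}=1$ and $\{X\oplus Y[-1],M\}=|\Hom_{\A}(Y,M)|^{-1}$; the condition that the cone be $N\in\A$ \emph{forces} $f$ to be a monomorphism (this is not genericity: the homology long exact sequence of the octahedron triangle $Y[-1]\to\cone(f)\to N\to Y$ gives $\ker f\hookrightarrow H^{-1}(N)=0$), and such $f$ with cokernel in a fixed class $[L]$ number $a_XF_{L,X}^M$; the map $\Ext^1_{\A}(Y,M)\to\Ext^1_{\A}(Y,L)$ is surjective by heredity with fibres of size $|\Ext^1_{\A}(Y,M)|/|\Ext^1_{\A}(Y,L)|$; and the Riedtmann--Peng formula $|\Ext^1_{\A}(Y,L)_N|=F_{Y,L}^N\,|\Hom_{\A}(Y,L)|\,a_Ya_L/a_N$ is what supplies $a_Y$, the weight $a_L$, and $1/a_N$ (so the $a_L$ is not an ``$\Aut(L)$-orbit'' correction on a fibration of pairs, as you suggest). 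Collecting the $q$-powers and using $\hat M=\hat X+\hat L$ gives $q^{\lr{\hat Y,\hat L}-\lr{\hat Y,\hat M}}=q^{-\lr{\hat Y,\hat X}}$. Your plan is repairable along these lines, but as written the normalization error would derail the computation of the prefactor.
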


\begin{lemma}\label{hallzhuan}$($\cite{Zhang}$)$
For any objects $M, M_1, M_2, I, J$ in $\A$, we have that

$(1)$~\begin{flalign*}H_{I[1]\oplus M_1, M_2\oplus J[-1]}^M&=q^{-\lr{\hat{M}_1,\hat{I}}-\lr{\hat{J},\hat{M}_2}}\frac{a_{M_1}a_{M_2}a_{I}a_{J}}{a_M}F_{M_1\oplus I[1],J[-1]\oplus M_2}^M\\
&=q^{\lr{\hat{J},\hat{I}}-\lr{\hat{M}_1,\hat{I}}-\lr{\hat{J},\hat{M}_2}}\frac{a_{M_1}a_{M_2}a_{I}a_{J}}{a_M}F_{M_1,J[-1],I[1],M_2}^M.\end{flalign*}

$(2)$~$$F_{M_1,J[-1],I[1],M_2}^M=\sum\limits_{[N],[L]\in{\rm Iso}(\A)}\frac{a_Na_L}{a_{M_1}a_{M_2}}F_{J,N}^{M_1}F_{L,I}^{M_{2}}F_{NL}^M.$$

\end{lemma}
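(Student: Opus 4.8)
The plan is to establish Lemma~\ref{hallzhuan} in two independent pieces, treating part $(2)$ first since it is purely a statement inside the derived Hall algebra $\mathcal{D}\mathcal{H}(\A)$ and feeds into part $(1)$. For part $(2)$, I would start from the definition of the structure constant $F_{M_1,J[-1],I[1],M_2}^M$ as the coefficient in the fourfold product $\mu_{[M_1]}\mu_{[J[-1]]}\mu_{[I[1]]}\mu_{[M_2]}$, and apply the associativity formula \eqref{associativity} to split the product strategically. The natural bracketing is to pair $\mu_{[J[-1]]}$ with $\mu_{[M_1]}$ on the left and $\mu_{[I[1]]}$ with $\mu_{[M_2]}$ on the right, so that the middle objects $J[-1]$ and $I[1]$ get ``absorbed'' into $M_1$ and $M_2$ respectively. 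Concretely, I expect
\begin{equation*}
F_{M_1,J[-1],I[1],M_2}^M=\sum_{[N],[L]\in{\rm Iso}(\A)}F_{M_1,J[-1]}^{?}F_{I[1],M_2}^{?}F_{?,?}^M,
\end{equation*}
where the intermediate terms $F_{M_1,J[-1]}^{N}$ and $F_{I[1],M_2}^{L}$ must be computed. The key observation is that $F_{M_1,J[-1]}^{N}$ is nonzero only when the cone of a map $N\to M_1$ is $J[-1][1]=J$, i.e. the relevant extension data is governed by $g^{M_1}_{J,N}$-type Hall numbers in $\A$; Lemma~\ref{jichuhallshu} (applied with a shift) is exactly the tool to rewrite $F_{M_1,J[-1]}^{N}$ and $F_{I[1],M_2}^{L}$ in terms of ordinary Hall numbers $F_{J,N}^{M_1}$ and $F_{L,I}^{M_2}$, picking up the $a_N/a_{M_1}$ and $a_L/a_{M_2}$ factors. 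The final product $F_{N,L}^{M}$ (written $F_{NL}^M$ in the statement) is then just the ordinary derived Hall number for two objects of $\A$, which equals $g^M_{N,L}$.

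For part $(1)$, the strategy is a direct bookkeeping computation using the conversion formula \eqref{drpgs1} between the two sets of generators $\{u_{[X]}\}$ and $\{\mu_{[X]}\}$, together with Lemma~\ref{jichuhallshu}. Writing $X=I[1]\oplus M_1$ and $Y=M_2\oplus J[-1]$, the defining relation
\begin{equation*}
H_{X,Y}^M=\frac{|\Ext^1_{D^b(\A)}(X,Y)_M|}{|\Hom_{D^b(\A)}(X,Y)|}\cdot\frac{1}{\{X,Y\}}
\end{equation*}
must be related to $F_{X,Y}^M$ by multiplying out the $\{\cdot,\cdot\}$ and automorphism factors from \eqref{drpgs1}. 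Here I would carefully expand $\{X,X\}$, $\{Y,Y\}$, $\{X,Y\}$, and $a_X=|{\rm Aut}_{D^b(\A)}(X)|$ using the direct-sum decompositions; since $\A$ is hereditary, the higher $\Hom$ spaces between shifted objects vanish except in a narrow range of degrees, so these brackets collapse into explicit products of $q^{\lr{\cdot,\cdot}}$ factors and the $a_{M_1},a_{M_2},a_I,a_J$ terms. This is what produces the prefactor $q^{-\lr{\hat{M}_1,\hat{I}}-\lr{\hat{J},\hat{M}_2}}\frac{a_{M_1}a_{M_2}a_Ia_J}{a_M}$ in front of $F_{M_1\oplus I[1],J[-1]\oplus M_2}^M$ in the first equality. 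The second equality of $(1)$ then follows by reorganizing $F_{M_1\oplus I[1],J[-1]\oplus M_2}^M$ as a fourfold structure constant via associativity, which reduces it to $F_{M_1,J[-1],I[1],M_2}^M$ up to the extra factor $q^{\lr{\hat{J},\hat{I}}}$; this last factor arises precisely from commuting $J[-1]$ past $I[1]$ in the derived Hall algebra, where the relevant $\Hom$ in $D^b(\A)$ between $I[1]$ and $J[-1]$ contributes $\lr{\hat{J},\hat{I}}$.

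The main obstacle I anticipate is the degree bookkeeping in the $\{\cdot,\cdot\}$ brackets and the precise tracking of sign conventions in the Euler form exponents. Because $\{M,N\}=\prod_{i>0}|\Hom_{D^b(\A)}(M[i],N)|^{(-1)^i}$ alternates sign with the shift degree, and because the objects $I[1]$, $M_1$, $J[-1]$, $M_2$ sit in three different shift positions, one must be scrupulous about which $\Hom$-spaces survive in a hereditary category (where $\Hom_{D^b(\A)}(A[i],B)$ vanishes unless $i\in\{0,-1\}$ for $A,B\in\A$, equivalently $\Ext^i_\A$ vanishes for $i\geq 2$). Keeping the cross-terms between the four summands straight—and confirming that the accumulated $q$-powers collapse exactly to the stated exponents $-\lr{\hat{M}_1,\hat{I}}-\lr{\hat{J},\hat{M}_2}$ and $\lr{\hat{J},\hat{I}}-\lr{\hat{M}_1,\hat{I}}-\lr{\hat{J},\hat{M}_2}$—is the delicate part. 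Everything else reduces to routine application of Lemma~\ref{jichuhallshu} and the associativity identity \eqref{associativity}, so I would allocate most of the verification effort to this exponent-matching step.
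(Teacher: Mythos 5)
The paper itself does not prove Lemma \ref{hallzhuan}; it is quoted from \cite{Zhang}, so there is no in-paper argument to set yours against. Judged on its own terms, your plan identifies the right ingredients: for part $(1)$, the derived Riedtmann--Peng formula \eqref{drpgs1} together with the collapse, under heredity, of the brackets $\{X,X\}$, $\{Y,Y\}$ and of $|{\rm Aut}_{D^b(\A)}(I[1]\oplus M_1)|$, $|{\rm Aut}_{D^b(\A)}(M_2\oplus J[-1])|$ (note that $\{X,Y\}$ itself cancels between $H$ and $F$, so you need not expand it), plus the commutation $\mu_{[I[1]]}\mu_{[J[-1]]}=q^{\lr{\hat{J},\hat{I}}}\mu_{[J[-1]]}\mu_{[I[1]]}$ for the second equality; for part $(2)$, associativity \eqref{associativity} with your proposed bracketing and the degenerate cases of Lemma \ref{jichuhallshu}. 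The exponents you predict do come out as stated; for instance $a_X\{X,X\}=a_Ia_{M_1}\,|\Ext^1_{\A}(M_1,I)|/|\Hom_{\A}(M_1,I)|=a_Ia_{M_1}q^{-\lr{\hat{M}_1,\hat{I}}}$ for $X=I[1]\oplus M_1$.

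The one genuine gap is in part $(2)$. The associativity formula gives
\begin{equation*}
F_{M_1,J[-1],I[1],M_2}^M=\sum_{[N'],[L']\in{\rm Iso}(D^b(\A))}F_{M_1,J[-1]}^{N'}\,F_{I[1],M_2}^{L'}\,F_{N',L'}^M,
\end{equation*}
with the intermediate objects ranging over all of $D^b(\A)$, whereas you write the sum over ${\rm Iso}(\A)$ from the outset. It is not true that $F_{M_1,J[-1]}^{N'}$ vanishes unless $N'\in\A$: the triangle $J[-1]\to N'\to M_1\to J$ only forces $N'\cong N\oplus C[-1]$ with $N=\ker(M_1\to J)$ and $C={\rm coker}(M_1\to J)$, and similarly $L'\cong L\oplus D[1]$. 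The restriction to $N',L'\in\A$ comes from the third factor: $F_{N',L'}^M\neq 0$ with $M\in\A$ forces $C=D=0$, as one sees from the homology of the triangle $L'\to M\to N'\to L'[1]$ in degrees $\pm 1$. This step must be made explicit; once it is, the identities $F_{M_1,J[-1]}^{N}=\frac{a_N}{a_{M_1}}F_{J,N}^{M_1}$ and $F_{I[1],M_2}^{L}=\frac{a_L}{a_{M_2}}F_{L,I}^{M_2}$ (the $X=0$, respectively $Y=0$, specializations of Lemma \ref{jichuhallshu}) yield exactly the stated formula. The remainder of your outline is a plan rather than an executed computation, but the plan is correct.
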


\section{$m$-periodic extended derived Hall algebras}
Let $G$ be an automorphism of $D^b(\A)$. We recall that the {\em orbit category} $D^b(\A)/G$ is the category whose objects are
by definition the $G$-orbits $\widetilde{X}$ of objects $X$ in $D^b(\A)$ and morphisms are given by
\begin{equation}\label{guidaodef}
\Hom_{D^b(\A)/G}(\widetilde{X},\widetilde{Y})=\bigoplus\limits_{i\in\mathbb{Z}}\Hom_{D^b(\A)}(X,G^iY).\end{equation}
If $G$ is the $m$-th shift functor $[m]$ of $D^b(\A)$, the corresponding orbit category is called the {\em $m$-periodic derived category} of $\A$, which is also denoted by ${D}_m(\A)$.
According to \cite{PX97,Kellero}, the $m$-periodic derived category ${D}_m(\A)$ is a triangulated category, and its suspension functor is also denoted by $[1]$. In this section, we define a Hall algebra for the $m$-periodic derived category ${D}_m(\A)$ and prove its associativity.

For any triangle in $D_m(\A)$ $$\osum \widetilde{B}_i[i]\longrightarrow \osum \widetilde{M}_i[i]\longrightarrow \osum \widetilde{A}_i[i]\longrightarrow \osum \widetilde{B}_i[i+1]$$ such that $A_i,B_i,M_i\in\A$ with $i\in\mathbb{Z}_m$, by considering the homologies,
we have the following cyclic exact sequence in $\A$
\begin{equation}\label{cyclicexact}
{\cdots\xrightarrow{}B_0\xrightarrow{} M_0\xrightarrow{} A_0\xrightarrow{f_{m-1}} B_{m-1}\xrightarrow{} M_{m-1}\xrightarrow{}\cdots
\xrightarrow{f_1}B_1\xrightarrow{}M_1\xrightarrow{}A_1\xrightarrow{f_0}B_0\xrightarrow{}\cdots}.\end{equation}
Thus, for each $i\in\mathbb{Z}_m$, taking $I_i=\im f_i$, we obtain the exact sequences in $\A$
\begin{equation}\label{fiveterm}
\xymatrix{0\ar[r]&I_i\ar[r]&B_i\ar[r]&M_i\ar[r]&A_i\ar[r]&I_{i-1}\ar[r]&0.}
\end{equation}
By \cite[Lemma 3.1]{Zhang}, for each $i\in\mathbb{Z}_m$, we obtain the following triangle in $D^b(\A)$
$$\xymatrix{B_i\oplus I_{i-1}[-1]\ar[r]&M_i\ar[r]&I_i[1]\oplus A_i.}$$
Hence, we have the dual derived Hall numbers $H_{I_i[1]\oplus A_i,B_i\oplus I_{i-1}[-1]}^{M_i}$ for each $i\in\mathbb{Z}_m$.
Inspired by \cite{Zhang}, we give the following
\begin{definition}\label{maindef}
The Hall algebra $\mathcal {D}\mathcal {H}_m^{\rm e}(\A)$, called the {\em $m$-periodic extended derived Hall algebra} of $\A$, is the $\mathbb{C}$-vector space with the basis $\{u_{\bigoplus\limits_{i\in\mathbb{Z}_m}M_i[i]}\prod\limits_{i\in\mathbb{Z}_m}K_{\alpha_i,i}~|~[M_i]\in {\rm Iso}(\A),\alpha_i\in K(\A)~\text{for~all~}i\in\mathbb{Z}_m\}$, and with the multiplication defined on basis elements by
{\begin{equation}\label{exthalmul}
\begin{split}&(u_{\bigoplus\limits_{i\in\mathbb{Z}_m}A_i[i]}\prod\limits_{i\in\mathbb{Z}_m}K_{\alpha_i,i})(u_{\bigoplus\limits_{i\in\mathbb{Z}_m}B_i[i]}\prod\limits_{i\in\mathbb{Z}_m}K_{\beta_i,i})=\\&
v^{a_0}\sum\limits_{[I_i],[M_i]\in{\rm Iso}(\A), i\in\mathbb{Z}_m}v^{-(\hat{I}_{m-1},\alpha_0+\beta_0)+\sum\limits_{i=1}^{m-1}(\hat{I}_i,\alpha_{i-1}+\beta_{i-1})+\sum\limits_{i\in\mathbb{Z}_m}\lr{\hat{M}_i-\hat{M}_{i+1},\hat{I}_i}+\sum\limits_{i=1}^{m-1}\lr{\hat{I}_{i-1},\hat{I}_i}-\lr{\hat{I}_0,\hat{I}_{m-1}}}
\\&\quad\quad\quad\quad\quad\quad\quad\quad\quad\quad\quad\prod\limits_{i\in\mathbb{Z}_m}\frac{H_{I_i[1]\oplus A_i,B_i\oplus I_{i-1}[-1]}^{M_i}}{a_{I_i}} u_{\bigoplus\limits_{i\in\mathbb{Z}_m}M_i[i]}\prod\limits_{i\in\mathbb{Z}_m}K_{{\hat{I}}_i+\alpha_i+\beta_i,i},\end{split}\end{equation}}
where $a_0=\sum\limits_{i\in\mathbb{Z}_m}\lr{\hat{A}_i,\hat{B}_i}+\sum\limits_{i\in\mathbb{Z}_m}(\alpha_i,\hat{B}_i-\hat{B}_{i+1})+\sum\limits_{i=1}^{m-1}(\alpha_i,\beta_{i-1})-(\alpha_{m-1},\beta_0)$.
By convention, $\sum\limits_{i=1}^{m-1}x_i=x_1$, if $m=1$.
In particular, we have that
{\begin{equation}\label{yschengfa}
\begin{split}&u_{\bigoplus\limits_{i\in\mathbb{Z}_m}A_i[i]}u_{\bigoplus\limits_{i\in\mathbb{Z}_m}B_i[i]}=
v^{\sum\limits_{i\in\mathbb{Z}_m}\lr{\hat{A}_i,\hat{B}_i}}\sum\limits_{[I_i],[M_i]\in{\rm Iso}(\A), i\in\mathbb{Z}_m}v^{\sum\limits_{i\in\mathbb{Z}_m}\lr{\hat{M}_i-\hat{M}_{i+1},\hat{I}_i}+\sum\limits_{i=1}^{m-1}\lr{\hat{I}_{i-1},\hat{I}_i}-\lr{\hat{I}_0,\hat{I}_{m-1}}}\\&
\quad\quad\quad\quad\quad\quad\quad\quad\quad\quad\quad\quad\quad\quad\prod\limits_{i\in\mathbb{Z}_m}\frac{H_{I_i[1]\oplus A_i,B_i\oplus I_{i-1}[-1]}^{M_i}}{a_{I_i}} u_{\bigoplus\limits_{i\in\mathbb{Z}_m}M_i[i]}\prod\limits_{i\in\mathbb{Z}_m}K_{{\hat{I}}_i,i},\end{split}\end{equation}}
{\begin{equation}\label{Kjiaohuan}
\begin{split}
&\prod\limits_{i\in\mathbb{Z}_m}K_{\alpha_i,i}\prod\limits_{i\in\mathbb{Z}_m}K_{\beta_i,i}=
v^{-(\alpha_{m-1},\beta_0)+\sum\limits_{i=1}^{m-1}(\alpha_i,\beta_{i-1})}\prod\limits_{i\in\mathbb{Z}_m}K_{\alpha_i+\beta_i,i},\end{split}\end{equation}}
{\begin{equation}\label{kujiaohuan}
\begin{split}&(\prod\limits_{i\in\mathbb{Z}_m}K_{\alpha_i,i})u_{\bigoplus\limits_{i\in\mathbb{Z}_m}B_i[i]}=
v^{\sum\limits_{i\in\mathbb{Z}_m}(\alpha_i,\hat{B}_i-\hat{B}_{i+1})}u_{\bigoplus\limits_{i\in\mathbb{Z}_m}B_i[i]}\prod\limits_{i\in\mathbb{Z}_m}K_{\alpha_i,i}.
\end{split}\end{equation}}
\end{definition}
\begin{remark}\label{12zhuji}
In Definition \ref{maindef}, by (\ref{Kjiaohuan}), it is easy to see that
\begin{equation}\label{kkjiaohuan}
\prod\limits_{i\in\mathbb{Z}_m}K_{\alpha_i,i}\prod\limits_{i\in\mathbb{Z}_m}K_{\beta_i,i}
=v^{\sum\limits_{i\in\mathbb{Z}_m}(\alpha_i,\beta_{i-1}-\beta_{i+1})}\prod\limits_{i\in\mathbb{Z}_m}K_{\beta_i,i}\prod\limits_{i\in\mathbb{Z}_m}K_{\alpha_i,i}.
\end{equation}
Hence, if $m=1,2$, the $K$-elements exchange with each other in the Hall algebra $\mathcal {D}\mathcal {H}_m^{\rm e}(\A)$.

For $m=1$, the equation (\ref{kujiaohuan}) becomes $K_{\alpha_0}u_{B_0}=u_{B_0}K_{\alpha_0}$, and thus the $K$-elements in $\mathcal {D}\mathcal {H}_1^{\rm e}(\A)$ are central.
The equation (\ref{yschengfa}) becomes
\begin{equation}\label{bushitensor}
u_{A_0}u_{B_0}=v^{\lr{\hat{A}_0,\hat{B}_0}}\sum\limits_{[I_0],[M_0]\in{\rm Iso}(\A)}\frac{H_{I_0[1]\oplus A_0,B_0\oplus I_0[-1]}^{M_0}}{a_{I_0}}u_{M_0}K_{\hat{I}_0,0}.\end{equation}
Although the $K$-elements are central in the $1$-periodic case, we remark that the $1$-periodic extended derived Hall algebra $\mathcal {D}\mathcal {H}_1^{\rm e}(\A)$ is not isomorphic to the tensor algebra of the $1$-periodic derived Hall algebra $\mathcal{DH}_1(\A)$ (cf. Definition \ref{jidef}) and the group algebra $\mathbb{C}[K(\A)]$ of the Grothendieck group $K(\A)$, since there are $K$-elements appearing in (\ref{bushitensor}).

For $m=2$, it is easy to see that $\hat{M}_0-\hat{M}_1=\hat{A}_0-\hat{A}_1+\hat{B}_0-\hat{B}_1$ in (\ref{exthalmul}), and the Hall algebra $\mathcal {D}\mathcal {H}_2^{\rm e}(\A)$ is the same as given in \cite[Definition 3.2]{Zhang}.
\end{remark}

\begin{theorem}\label{mainresult0}
The $m$-periodic extended derived Hall algebra $\mathcal {D}\mathcal {H}_m^{\rm e}(\A)$ is an associative algebra.
\end{theorem}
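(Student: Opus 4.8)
The plan is to compute the product of three basis elements in both bracketings and match the resulting structure constants, organising the computation so that the appended $K$-elements are cleanly separated from the genuine Hall-theoretic content. First I would observe that, by the relations (\ref{Kjiaohuan}) and (\ref{kujiaohuan}), the general product (\ref{exthalmul}) is obtained from the pure product (\ref{yschengfa}) by commuting the trailing factors $\prod_i K_{\alpha_i,i}$ and $\prod_i K_{\beta_i,i}$ to the right and recombining them with the intrinsic factors $\prod_i K_{\hat I_i,i}$ produced by (\ref{yschengfa}); this is exactly the crossed-product shape that the prefactor $v^{a_0}$ records. Consequently associativity of $\mathcal{DH}_m^{\rm e}(\A)$ reduces to two checks: that the cocycle defining the $K$-multiplication (\ref{Kjiaohuan}) is associative, which is immediate from bilinearity of $(\cdot,\cdot)$; and that the restricted product (\ref{yschengfa}) on the elements $u_{\oplus_i A_i[i]}$ is associative. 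The only subtle interaction between the two is that the characters $\sum_i(\alpha_i,\hat B_i-\hat B_{i+1})$ by which the $K$'s scale the $u$'s must be compatible with (\ref{yschengfa}); this compatibility is forced by the homological identities $\hat M_i=\hat A_i+\hat B_i-\hat I_i-\hat I_{i-1}$ read off from the five-term sequences (\ref{fiveterm}), and is routine.

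For the core associativity of (\ref{yschengfa}) I would eliminate the dual derived Hall numbers in favour of ordinary Hall numbers in $\A$. Matching the indices of Lemma \ref{hallzhuan}(1) with $I=I_i$, $M_1=A_i$, $M_2=B_i$, $J=I_{i-1}$, $M=M_i$ gives
\[
H_{I_i[1]\oplus A_i,\,B_i\oplus I_{i-1}[-1]}^{M_i}=q^{\lr{\hat I_{i-1},\hat I_i}-\lr{\hat A_i,\hat I_i}-\lr{\hat I_{i-1},\hat B_i}}\,\frac{a_{A_i}a_{B_i}a_{I_i}a_{I_{i-1}}}{a_{M_i}}\,F_{A_i,\,I_{i-1}[-1],\,I_i[1],\,B_i}^{M_i},
\]
and Lemma \ref{hallzhuan}(2) further rewrites each four-fold derived Hall number as $\sum_{N_i,L_i}\frac{a_{N_i}a_{L_i}}{a_{A_i}a_{B_i}}g_{I_{i-1},N_i}^{A_i}g_{L_i,I_i}^{B_i}g_{N_i,L_i}^{M_i}$, a product of ordinary Hall numbers. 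Substituting these into (\ref{yschengfa}), the $m$-periodic structure constant becomes a sum over the images $\{I_i\}$, the middle terms $\{M_i\}$ and the auxiliary $\{N_i\},\{L_i\}$ of a product $\prod_{i\in\mathbb{Z}_m}$ of Hall numbers, weighted by an explicit Euler-form $v$-exponent. The essential feature is that the summation over each image $I_i$ couples the factor indexed by $i$ (which carries $I_i[1]$) to the factor indexed by $i+1$ (which carries $I_i[-1]$), so the whole expression is a cyclic chain of Hall numbers rather than an $m$-fold product of independent terms.

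Associativity of (\ref{yschengfa}) then becomes a purely combinatorial identity among ordinary Hall numbers in $\A$, which I would establish from the associativity of the Ringel--Hall algebra $\mathcal{H}(\A)$ together with Green's formula \cite{Gr95}. Expanding the triple product of three pure elements in the two bracketings produces two triple-indexed cyclic sums over intermediate images; at each junction $i$ the two image contributions arising from the two successive multiplications must be interchanged, and Green's formula supplies precisely this commutation, while associativity of $\mathcal{H}(\A)$ lets one split and recombine the chains. Throughout, the shift terms $I_i[\pm1]$ are reabsorbed into honest objects of $\A$ by Lemma \ref{jichuhallshu}, so that the entire identity takes place inside $\mathcal{H}(\A)$.

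I expect the main obstacle to be the cyclic coupling encoded by the wrap-around terms $-\lr{\hat I_0,\hat I_{m-1}}$ in (\ref{yschengfa}) and $-(\hat I_{m-1},\alpha_0+\beta_0)$ in (\ref{exthalmul}): these single out the junction between the indices $0$ and $m-1$ and prevent the computation from factoring as $m$ independent copies of the $m=1$ case. One must instead verify that the total Euler-form exponent accumulated around the entire cycle is invariant under the two bracketings, which demands careful global tracking of the forms $\lr{\cdot,\cdot}$ and $(\cdot,\cdot)$ rather than any single local application of Green's formula. The case $m=2$, already settled in \cite{Zhang}, provides a template for this bookkeeping.
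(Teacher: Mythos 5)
Your proposal is correct and follows essentially the same route as the paper: reduce to associativity of the pure product (\ref{yschengfa}) by checking the $K$-cocycle separately, convert the dual derived Hall numbers into ordinary Hall numbers in $\A$ via Lemma \ref{hallzhuan}, apply Green's formula at each index together with the associativity of Hall numbers, and then verify that the accumulated Euler-form exponents agree for the two bracketings. The only part you leave unexecuted --- the global exponent bookkeeping around the cycle, including the wrap-around terms --- is exactly the computation the paper carries out in detail, and you correctly identify it as the crux.
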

\begin{proof}
For $m=1$, by Remark \ref{12zhuji}, the $K$-elements in $\mathcal {D}\mathcal {H}_1^{\rm e}(\A)$ are central, and it is easy to see that it is associative by \cite[Theorem 5.2]{Zhang}. Now we assume that $m>1$.

We need to prove that
\begin{equation}\label{jhlyz}
\begin{split}
&[(u_{\bigoplus\limits_{i\in\mathbb{Z}_m}A_i[i]}\prod\limits_{i\in\mathbb{Z}_m}K_{\alpha_i,i})(u_{\bigoplus\limits_{i\in\mathbb{Z}_m}B_i[i]}\prod\limits_{i\in\mathbb{Z}_m}K_{\beta_i,i})](u_{\bigoplus\limits_{i\in\mathbb{Z}_m}C_i[i]}\prod\limits_{i\in\mathbb{Z}_m}K_{\gamma_i,i})\\
&=(u_{\bigoplus\limits_{i\in\mathbb{Z}_m}A_i[i]}\prod\limits_{i\in\mathbb{Z}_m}K_{\alpha_i,i})[(u_{\bigoplus\limits_{i\in\mathbb{Z}_m}B_i[i]}\prod\limits_{i\in\mathbb{Z}_m}K_{\beta_i,i})(u_{\bigoplus\limits_{i\in\mathbb{Z}_m}C_i[i]}\prod\limits_{i\in\mathbb{Z}_m}K_{\gamma_i,i})]
\end{split}\end{equation}
for any $A_i,B_i,C_i\in{\rm Iso}(\A),\alpha_i,\beta_i,\gamma_i\in K(\A), i\in\mathbb{Z}_m$.

By definition, the left hand side of (\ref{jhlyz}) is
\begin{flalign*}
&\mbox{LHS~~of}~~(\ref{jhlyz})=\\&v^{\sum\limits_{i\in \mathbb{Z}_m}(\alpha_i,\hat{B}_i-\hat{B}_{i+1})-(\alpha_{m-1},\beta_0)+\sum\limits_{i=1}^{m-1}(\alpha_i,\beta_{i-1})+\sum\limits_{i\in\mathbb{Z}_m}(\alpha_i+\beta_i,\hat{C}_i-\hat{C}_{i+1})-(\alpha_{m-1}+\beta_{m-1},\gamma_0)+\sum\limits_{i=1}^{m-1}(\alpha_i+\beta_i,\gamma_{i-1})}\\&
\quad\quad\quad\quad\quad[(u_{\bigoplus\limits_{i\in\mathbb{Z}_m}A_i[i]}u_{\bigoplus\limits_{i\in\mathbb{Z}_m}B_i[i]})u_{\bigoplus\limits_{i\in\mathbb{Z}_m}C_i[i]}]\prod\limits_{i\in\mathbb{Z}_m}K_{\alpha_i+\beta_i+\gamma_i,i}
\end{flalign*}
and
the right hand side of (\ref{jhlyz}) is
\begin{flalign*}
&\mbox{RHS~~of}~~(\ref{jhlyz})=\\&v^{\sum\limits_{i\in \mathbb{Z}_m}(\beta_i,\hat{C}_i-\hat{C}_{i+1})-(\beta_{m-1},\gamma_0)+\sum\limits_{i=1}^{m-1}(\beta_i,\gamma_{i-1})+\sum\limits_{i\in\mathbb{Z}_m}(\alpha_i,\hat{B}_i-\hat{B}_{i+1}+\hat{C}_i-\hat{C}_{i+1})-(\alpha_{m-1},\beta_0+\gamma_0)+\sum\limits_{i=1}^{m-1}(\alpha_i,\beta_{i-1}+\gamma_{i-1})}\\&
\quad\quad\quad\quad\quad[u_{\bigoplus\limits_{i\in\mathbb{Z}_m}A_i[i]}(u_{\bigoplus\limits_{i\in\mathbb{Z}_m}B_i[i]}u_{\bigoplus\limits_{i\in\mathbb{Z}_m}C_i[i]})]\prod\limits_{i\in\mathbb{Z}_m}K_{\alpha_i+\beta_i+\gamma_i,i}.
\end{flalign*}
Hence, the proof of (\ref{jhlyz}) is reduced to prove that
\begin{equation}\label{rjhl}
[(u_{\bigoplus\limits_{i\in\mathbb{Z}_m}A_i[i]}u_{\bigoplus\limits_{i\in\mathbb{Z}_m}B_i[i]})u_{\bigoplus\limits_{i\in\mathbb{Z}_m}C_i[i]}]
=[u_{\bigoplus\limits_{i\in\mathbb{Z}_m}A_i[i]}(u_{\bigoplus\limits_{i\in\mathbb{Z}_m}B_i[i]}u_{\bigoplus\limits_{i\in\mathbb{Z}_m}C_i[i]})].\end{equation}

On the one hand,
\begin{flalign*}
\mbox{LHS~~of}~~(\ref{rjhl})=
\sum\limits_{[I_i],[X_i]\in{\rm Iso}(\A),i\in\mathbb{Z}_m}v^{a_0}\zprod\frac{H_{I_i[1]\oplus A_i,B_i\oplus I_{i-1}[-1]}^{X_i}}{a_{I_i}}(u_{\osum X_i[i]}\zprod K_{\hat{I}_i,i})u_{\bigoplus\limits_{i\in\mathbb{Z}_m}C_i[i]},
\end{flalign*}
where $a_0=\zsum\lr{\hat{A}_i,\hat{B}_i}+\zsum\lr{\hat{X}_i-\hat{X}_{i+1},\hat{I}_i}+\sum\limits_{i=1}^{m-1}\lr{\hat{I}_{i-1},\hat{I}_i}-\lr{\hat{I}_0,\hat{I}_{m-1}}.$ Thus,
\begin{flalign*}
\mbox{LHS~~of}~~(\ref{rjhl})=
\sum\limits_{[I_i],[X_i]\in{\rm Iso}(\A),i\in\mathbb{Z}_m}v^{a}\zprod\frac{H_{I_i[1]\oplus A_i,B_i\oplus I_{i-1}[-1]}^{X_i}}{a_{I_i}}(u_{\osum X_i[i]}u_{\bigoplus\limits_{i\in\mathbb{Z}_m}C_i[i]})\zprod K_{\hat{I}_i,i},
\end{flalign*}
where $a=a_0+\zsum(\hat{I}_i,\hat{C}_i-\hat{C}_{i+1}).$
Hence,
\begin{flalign*}
&\mbox{LHS~~of}~~(\ref{rjhl})=\\
&\sum\limits_{[I_i],[X_i],[J_i],[M_i]\in{\rm Iso}(\A),i\in\mathbb{Z}_m}v^{b_0}\zprod\frac{H_{I_i[1]\oplus A_i,B_i\oplus I_{i-1}[-1]}^{X_i}}{a_{I_i}}
\zprod\frac{H_{J_i[1]\oplus X_i,C_i\oplus J_{i-1}[-1]}^{M_i}}{a_{J_i}}\\
&\quad\quad\quad\quad\quad\quad\quad\quad\quad\quad\quad u_{\osum M_i[i]}\zprod K_{\hat{J}_i,i}\zprod K_{\hat{I}_i,i},
\end{flalign*}
where $b_0=a+\zsum\lr{\hat{X}_i,\hat{C}_i}+\zsum\lr{\hat{M}_i-\hat{M}_{i+1},\hat{J}_i}+\sum\limits_{i=1}^{m-1}\lr{\hat{J}_{i-1},\hat{J}_i}-\lr{\hat{J}_0,\hat{J}_{m-1}}.$
So,
\begin{flalign*}
&\mbox{LHS~~of}~~(\ref{rjhl})=\\
&\sum\limits_{[I_i],[X_i],[J_i],[M_i]\in{\rm Iso}(\A),i\in\mathbb{Z}_m}v^{b}\zprod\frac{H_{I_i[1]\oplus A_i,B_i\oplus I_{i-1}[-1]}^{X_i}}{a_{I_i}}
\zprod\frac{H_{J_i[1]\oplus X_i,C_i\oplus J_{i-1}[-1]}^{M_i}}{a_{J_i}}\\
&\quad\quad\quad\quad\quad\quad\quad\quad\quad\quad\quad u_{\osum M_i[i]}\zprod K_{\hat{I}_i+\hat{J}_i,i},
\end{flalign*}
where $b=b_0-(\hat{J}_{m-1},\hat{I}_0)+\sum\limits_{i=1}^{m-1}(\hat{J}_i,\hat{I}_{i-1}).$

Using Lemma \ref{hallzhuan}(1), we obtain that
\begin{flalign*}
&\mbox{LHS~~of}~~(\ref{rjhl})=
\sum\limits_{[I_i],[X_i],[J_i],[M_i]\in{\rm Iso}(\A),i\in\mathbb{Z}_m}v^{b}q^c\zprod\frac{a_{A_i}a_{B_i}a_{I_{i-1}}}{a_{X_i}}\zprod F_{A_i,I_{i-1}[-1],I_i[1],B_i}^{X_i}\\
&\quad\quad\quad\quad\quad\quad\quad\quad\quad \zprod\frac{a_{X_i}a_{C_i}a_{J_{i-1}}}{a_{M_i}}\zprod F_{X_i,J_{i-1}[-1],J_i[1],C_i}^{M_i}
u_{\osum M_i[i]}\zprod K_{\hat{I}_i+\hat{J}_i,i},
\end{flalign*}
where $c=\zsum (\lr{\hat{I}_{i-1},\hat{I}_i}-\lr{\hat{A}_i,\hat{I}_i}-\lr{\hat{I}_{i-1},\hat{B}_i}+\lr{\hat{J}_{i-1},\hat{J}_i}-\lr{\hat{X}_i,\hat{J}_i}-\lr{\hat{J}_{i-1},\hat{C}_i})$.
Furthermore, using Lemma \ref{hallzhuan}(2), we get that
\begin{flalign*}
&\mbox{LHS~~of}~~(\ref{rjhl})=\zprod (a_{A_i}a_{B_i}a_{C_i})
\sum\limits_{[I_i],[X_i],[J_i],[M_i]\in{\rm Iso}(\A),i\in\mathbb{Z}_m}v^{b}q^c\zprod\frac{a_{I_{i-1}}a_{J_{i-1}}}{a_{M_i}}\\
&\zprod(\sum\limits_{[N_i],[L_i]\in{\rm Iso}(\A)}\frac{a_{N_i}a_{L_i}}{a_{A_i}a_{B_i}}F_{I_{i-1},N_{i}}^{A_i}F_{L_i,I_i}^{B_i}F_{N_i,L_i}^{X_i})
\zprod(\sum\limits_{[S_i],[T_i]\in{\rm Iso}(\A)}\frac{a_{S_i}a_{T_i}}{a_{X_i}a_{C_i}}F_{J_{i-1},S_{i}}^{X_i}F_{T_i,J_i}^{C_i}F_{S_i,T_i}^{M_i})\\
&\quad\quad\quad\quad\quad\quad\quad\quad\quad\quad\quad\quad\quad\quad\quad\quad\quad\quad\quad\quad u_{\osum M_i[i]}\zprod K_{\hat{I}_i+\hat{J}_i,i}.
\end{flalign*}
Thus,
\begin{flalign*}
&\mbox{LHS~~of}~~(\ref{rjhl})=
\sum\limits_{[I_i],[X_i],[J_i],[M_i],[N_i],[L_i],[S_i],[T_i]\in{\rm Iso}(\A),i\in\mathbb{Z}_m}v^{b}q^c\zprod\frac{a_{I_{i-1}}a_{J_{i-1}}a_{N_i}a_{L_i}a_{S_i}a_{T_i}}{a_{M_i}}\\
&\quad\quad\zprod(F_{I_{i-1},N_{i}}^{A_i}F_{L_i,I_i}^{B_i}F_{T_i,J_i}^{C_i}F_{S_i,T_i}^{M_i})
\zprod(F_{N_{i},L_{i}}^{X_i}F_{J_{i-1},S_i}^{X_i}\frac{1}{a_{X_i}}) u_{\osum M_i[i]}\zprod K_{\hat{I}_i+\hat{J}_i,i}.
\end{flalign*}
For each $i\in\mathbb{Z}_m$, using Green's formula, we have that
\begin{flalign*}
&\sum\limits_{[X_i]\in{\rm Iso}(\A)}F_{N_{i},L_{i}}^{X_i}F_{J_{i-1},S_i}^{X_i}\frac{1}{a_{X_i}}\\&=\sum\limits_{[U_i],[V_i],[W_i],[Z_i]\in{\rm Iso}(\A)}
q^{-\lr{\hat{U}_i,\hat{Z}_i}}F_{U_i,V_i}^{N_i}F_{W_i,Z_i}^{L_i}F_{U_i,W_i}^{J_{i-1}}F_{V_i,Z_i}^{S_i}\frac{a_{U_i}a_{V_i}a_{W_i}a_{Z_i}}{a_{N_i}a_{L_i}a_{J_{i-1}}a_{S_i}}.
\end{flalign*}
Noting that $\hat{X}_i=\hat{A}_i+\hat{B}_i-\hat{I}_i-\hat{I}_{i-1}$ for each $i\in\mathbb{Z}_m$, we obtain that
\begin{flalign*}
&\mbox{LHS~~of}~~(\ref{rjhl})=
\sum\limits_{[I_i],[J_i],[M_i],[N_i],[L_i],[S_i],[T_i],[U_i],[V_i],[W_i],[Z_i]\in{\rm Iso}(\A),i\in\mathbb{Z}_m}v^{b}q^{c-\sum\limits_{i\in\mathbb{Z}_m}\lr{\hat{U}_i,\hat{Z}_i}}\\&\zprod\frac{a_{I_{i-1}}a_{T_i}a_{U_i}a_{V_i}a_{W_i}a_{Z_i}}{a_{M_i}}\zprod(F_{I_{i-1},N_{i}}^{A_i}F_{L_i,I_i}^{B_i}F_{T_i,J_i}^{C_i}F_{S_i,T_i}^{M_i}
F_{U_i,V_i}^{N_i}F_{W_i,Z_i}^{L_i}F_{U_i,W_i}^{J_{i-1}}F_{V_i,Z_i}^{S_i})\\&
\quad\quad\quad\quad\quad\quad\quad\quad\quad\quad\quad\quad\quad\quad u_{\osum M_i[i]}\zprod K_{\hat{I}_i+\hat{J}_i,i}.
\end{flalign*}
Noting that $\hat{J}_i=\hat{U}_{i+1}+\hat{W}_{i+1}$ for each $i\in\mathbb{Z}_m$ and applying the associativity formula (\ref{associativity}), we get that
\begin{equation}\label{zuobian}
\begin{split}
&\mbox{LHS~~of}~~(\ref{rjhl})=
\sum\limits_{[I_i],[M_i],[T_i],[U_i],[V_i],[W_i],[Z_i]\in{\rm Iso}(\A),i\in\mathbb{Z}_m}v^{b}q^{c-\sum\limits_{i\in\mathbb{Z}_m}\lr{\hat{U}_i,\hat{Z}_i}}\zprod\frac{a_{I_{i-1}}a_{T_i}a_{U_i}a_{V_i}a_{W_i}a_{Z_i}}{a_{M_i}}\\
&\quad\quad\quad\zprod(F_{I_{i-1},U_{i},V_i}^{A_i}F_{W_i,Z_i,I_i}^{B_i}F_{T_i,U_{i+1},W_{i+1}}^{C_i}F_{V_i,Z_i,T_i}^{M_i}) u_{\osum M_i[i]}\zprod K_{\hat{I}_i+\hat{U}_{i+1}+\hat{W}_{i+1},i}.
\end{split}
\end{equation}

On the other hand,
\begin{flalign*}
&\mbox{RHS~~of}~~(\ref{rjhl})=\\
&\sum\limits_{[J_i],[Y_i]\in{\rm Iso}(\A),i\in\mathbb{Z}_m}v^{x_0}\zprod\frac{H_{J_i[1]\oplus B_i,C_i\oplus J_{i-1}[-1]}^{Y_i}}{a_{J_i}}u_{\bigoplus\limits_{i\in\mathbb{Z}_m}A_i[i]}(u_{\osum Y_i[i]}\zprod K_{\hat{J}_i,i}),
\end{flalign*}
where $x_0=\zsum\lr{\hat{B}_i,\hat{C}_i}+\zsum\lr{\hat{Y}_i-\hat{Y}_{i+1},\hat{J}_i}+\sum\limits_{i=1}^{m-1}\lr{\hat{J}_{i-1},\hat{J}_i}-\lr{\hat{J}_0,\hat{J}_{m-1}}.$ Thus,
\begin{flalign*}
&\mbox{RHS~~of}~~(\ref{rjhl})=\\
&\sum\limits_{[J_i],[Y_i],[I_i],[M_i]\in{\rm Iso}(\A),i\in\mathbb{Z}_m}v^{x}\zprod\frac{H_{I_i[1]\oplus A_i,Y_i\oplus I_{i-1}[-1]}^{M_i}}{a_{I_i}}
\zprod\frac{H_{J_i[1]\oplus B_i,C_i\oplus J_{i-1}[-1]}^{Y_i}}{a_{J_i}}\\
&\quad\quad\quad\quad\quad\quad\quad\quad\quad\quad\quad u_{\osum M_i[i]}\zprod K_{\hat{I}_i,i}\zprod K_{\hat{J}_i,i},
\end{flalign*}
where $x=x_0+\zsum\lr{\hat{A}_i,\hat{Y}_i}+\zsum\lr{\hat{M}_i-\hat{M}_{i+1},\hat{I}_i}+\sum\limits_{i=1}^{m-1}\lr{\hat{I}_{i-1},\hat{I}_i}-\lr{\hat{I}_0,\hat{I}_{m-1}}.$
So,
\begin{flalign*}
&\mbox{RHS~~of}~~(\ref{rjhl})=\\
&\sum\limits_{[I_i],[Y_i],[J_i],[M_i]\in{\rm Iso}(\A),i\in\mathbb{Z}_m}v^{y}\zprod\frac{H_{I_i[1]\oplus A_i,Y_i\oplus I_{i-1}[-1]}^{M_i}}{a_{I_i}}
\zprod\frac{H_{J_i[1]\oplus B_i,C_i\oplus J_{i-1}[-1]}^{Y_i}}{a_{J_i}}\\
&\quad\quad\quad\quad\quad\quad\quad\quad\quad\quad\quad u_{\osum M_i[i]}\zprod K_{\hat{I}_i+\hat{J}_i,i},
\end{flalign*}
where $y=x-(\hat{I}_{m-1},\hat{J}_0)+\sum\limits_{i=1}^{m-1}(\hat{I}_i,\hat{J}_{i-1}).$

Using Lemma \ref{hallzhuan}(1), we obtain that
\begin{flalign*}
&\mbox{RHS~~of}~~(\ref{rjhl})=
\sum\limits_{[I_i],[Y_i],[J_i],[M_i]\in{\rm Iso}(\A),i\in\mathbb{Z}_m}v^{y}q^z\zprod\frac{a_{A_i}a_{Y_i}a_{I_{i-1}}}{a_{M_i}}\zprod F_{A_i,I_{i-1}[-1],I_i[1],Y_i}^{M_i}\\
&\quad\quad\quad\quad\quad\quad\quad\quad\quad \zprod\frac{a_{B_i}a_{C_i}a_{J_{i-1}}}{a_{Y_i}}\zprod F_{B_i,J_{i-1}[-1],J_i[1],C_i}^{Y_i}
u_{\osum M_i[i]}\zprod K_{\hat{I}_i+\hat{J}_i,i},
\end{flalign*}
where $z=\zsum (\lr{\hat{I}_{i-1},\hat{I}_i}-\lr{\hat{A}_i,\hat{I}_i}-\lr{\hat{I}_{i-1},\hat{Y}_i}+\lr{\hat{J}_{i-1},\hat{J}_i}-\lr{\hat{B}_i,\hat{J}_i}-\lr{\hat{J}_{i-1},\hat{C}_i})$. Furthermore, using Lemma \ref{hallzhuan}(2), we get that
\begin{flalign*}
&\mbox{RHS~~of}~~(\ref{rjhl})=\zprod (a_{A_i}a_{B_i}a_{C_i})
\sum\limits_{[I_i],[Y_i],[J_i],[M_i]\in{\rm Iso}(\A),i\in\mathbb{Z}_m}v^{y}q^z\zprod\frac{a_{I_{i-1}}a_{J_{i-1}}}{a_{M_i}}\\
&\zprod(\sum\limits_{[N_i],[L_i]\in{\rm Iso}(\A)}\frac{a_{N_i}a_{L_i}}{a_{A_i}a_{Y_i}}F_{I_{i-1},N_{i}}^{A_i}F_{L_i,I_i}^{Y_i}F_{N_i,L_i}^{M_i})
\zprod(\sum\limits_{[S_i],[T_i]\in{\rm Iso}(\A)}\frac{a_{S_i}a_{T_i}}{a_{B_i}a_{C_i}}F_{J_{i-1},S_{i}}^{B_i}F_{T_i,J_i}^{C_i}F_{S_i,T_i}^{Y_i})\\
&\quad\quad\quad\quad\quad\quad\quad\quad\quad\quad\quad\quad\quad\quad\quad\quad\quad\quad\quad\quad u_{\osum M_i[i]}\zprod K_{\hat{I}_i+\hat{J}_i,i}.
\end{flalign*}
Thus,
\begin{flalign*}
&\mbox{RHS~~of}~~(\ref{rjhl})=
\sum\limits_{[I_i],[Y_i],[J_i],[M_i],[N_i],[L_i],[S_i],[T_i]\in{\rm Iso}(\A),i\in\mathbb{Z}_m}v^{y}q^z\zprod\frac{a_{I_{i-1}}a_{J_{i-1}}a_{N_i}a_{L_i}a_{S_i}a_{T_i}}{a_{M_i}}\\
&\zprod(F_{I_{i-1},N_{i}}^{A_i}F_{J_{i-1},S_i}^{B_i}F_{T_i,J_i}^{C_i}F_{N_i,L_i}^{M_i})
\zprod(F_{L_{i},I_{i}}^{Y_i}F_{S_{i},T_i}^{Y_i}\frac{1}{a_{Y_i}}) u_{\osum M_i[i]}\zprod K_{\hat{I}_i+\hat{J}_i,i}.
\end{flalign*}
For each $i\in\mathbb{Z}_m$, using Green's formula, we have that
\begin{flalign*}
&\sum\limits_{[Y_i]\in{\rm Iso}(\A)}F_{L_{i},I_{i}}^{Y_i}F_{S_{i},T_i}^{Y_i}\frac{1}{a_{Y_i}}\\&=\sum\limits_{[K_i],[P_i],[Q_i],[R_i]\in{\rm Iso}(\A)}
q^{-\lr{\hat{K}_i,\hat{R}_i}}F_{K_i,P_i}^{L_i}F_{Q_i,R_i}^{I_i}F_{K_i,Q_i}^{S_{i}}F_{P_i,R_i}^{T_i}\frac{a_{K_i}a_{P_i}a_{Q_i}a_{R_i}}{a_{L_i}a_{I_i}a_{S_{i}}a_{T_i}}.
\end{flalign*}
Noting that $\hat{Y}_i=\hat{B}_i+\hat{C}_i-\hat{J}_i-\hat{J}_{i-1}$ for each $i\in\mathbb{Z}_m$, we obtain that
\begin{flalign*}
&\mbox{RHS~~of}~~(\ref{rjhl})=
\sum\limits_{[I_i],[J_i],[M_i],[N_i],[L_i],[S_i],[T_i],[K_i],[P_i],[Q_i],[R_i]\in{\rm Iso}(\A),i\in\mathbb{Z}_m}v^{y}q^{z-\sum\limits_{i\in\mathbb{Z}_m}\lr{\hat{K}_i,\hat{R}_i}}\\&\zprod\frac{a_{J_{i-1}}a_{N_i}a_{K_i}a_{P_i}a_{Q_i}a_{R_i}}{a_{M_i}}\zprod(F_{I_{i-1},N_{i}}^{A_i}F_{J_{i-1},S_i}^{B_i}F_{T_i,J_i}^{C_i}F_{N_i,L_i}^{M_i}
F_{K_i,P_i}^{L_i}F_{Q_i,R_i}^{I_i}F_{K_i,Q_i}^{S_{i}}F_{P_i,R_i}^{T_i})\\&
\quad\quad\quad\quad\quad\quad\quad\quad\quad\quad\quad\quad\quad\quad u_{\osum M_i[i]}\zprod K_{\hat{I}_i+\hat{J}_i,i}.
\end{flalign*}
Noting that $\hat{I}_i=\hat{Q}_{i}+\hat{R}_{i}$ for each $i\in\mathbb{Z}_m$ and applying the associativity formula (\ref{associativity}), we get that
\begin{equation}\label{youbian}
\begin{split}
&\mbox{RHS~~of}~~(\ref{rjhl})=
\sum\limits_{[J_i],[M_i],[N_i],[K_i],[P_i],[Q_i],[R_i]\in{\rm Iso}(\A),i\in\mathbb{Z}_m}v^{y}q^{z-\sum\limits_{i\in\mathbb{Z}_m}\lr{\hat{K}_i,\hat{R}_i}}\zprod\frac{a_{J_{i-1}}a_{N_i}a_{K_i}a_{P_i}a_{Q_i}a_{R_i}}{a_{M_i}}\\
&\quad\quad\quad\quad\zprod(F_{Q_{i-1},R_{i-1},N_i}^{A_i}F_{J_{i-1},K_i,Q_i}^{B_i}F_{P_i,R_{i},J_{i}}^{C_i}F_{N_i,K_i,P_i}^{M_i}) u_{\osum M_i[i]}\zprod K_{\hat{Q}_i+\hat{R}_{i}+\hat{J}_{i},i}.
\end{split}
\end{equation}

Replacing the notations $I_i, T_i, U_i, V_i, W_i, Z_i$ in (\ref{zuobian}) by $Q_i, P_i, R_{i-1}, N_i, J_{i-1}, K_i$, respectively, for each $i\in\mathbb{Z}_m$,
we conclude that all terms in (\ref{zuobian}) are the same as those in (\ref{youbian}) except the exponents of $v$ and $q$.

Now let us compare the exponents of $v$ and $q$ in (\ref{zuobian}) and (\ref{youbian}).
Noting that $\hat{J}_i=\hat{U}_{i+1}+\hat{W}_{i+1}$ for each $i\in\mathbb{Z}_m$ in the exponent $c$ and replacing the notations, we have that
\begin{equation}\label{lcommon1}
\begin{split}
c=\sum\limits_{i\in\mathbb{Z}_m}&(\lr{\hat{Q}_{i-1},\hat{Q}_i}-\lr{\hat{A}_i,\hat{Q}_i}-\lr{\hat{Q}_{i-1},\hat{B}_i}+\lr{\hat{R}_{i-1}+\hat{J}_{i-1},\hat{R}_i+\hat{J}_i}\\&-\lr{\hat{A}_i+\hat{B}_i,\hat{R}_i+\hat{J}_i}+\lr{\hat{Q}_i+\hat{Q}_{i-1},\hat{R}_i+\hat{J}_i}-\lr{\hat{R}_{i-1}+\hat{J}_{i-1},C_i}).
\end{split}\end{equation}
Noting that $\hat{I}_i=\hat{Q}_{i}+\hat{R}_{i}$ for each $i\in\mathbb{Z}_m$ in the exponent $z$, we have that
\begin{equation}\label{rcommon1}
\begin{split}
z=\sum\limits_{i\in\mathbb{Z}_m}&(\lr{\hat{Q}_{i-1}+\hat{R}_{i-1},\hat{Q}_i+\hat{R}_i}-\lr{\hat{A}_i,\hat{Q}_i+\hat{R}_i}-\lr{\hat{Q}_{i-1}+\hat{R}_{i-1},\hat{B}_i+\hat{C}_i}\\&+\lr{\hat{Q}_{i-1}+\hat{R}_{i-1},\hat{J}_i+\hat{J}_{i-1}}+\lr{\hat{J}_{i-1},\hat{J}_i}-\lr{\hat{B}_i,\hat{J}_i}-\lr{\hat{J}_{i-1},\hat{C}_i}).
\end{split}\end{equation}
By direct calculations, we get that \begin{equation*}c=\sum\limits_{i\in\mathbb{Z}_m}(\lr{\hat{J}_{i-1},\hat{R}_i}-\lr{\hat{A}_i,\hat{J}_i}-\lr{\hat{B}_i,\hat{R}_i}+\lr{\hat{Q}_i,\hat{R}_i})+\text{Common~terms}~(\clubsuit)\end{equation*} and \begin{equation*}z=\sum\limits_{i\in\mathbb{Z}_m}(\lr{\hat{R}_{i-1},\hat{Q}_i}-\lr{\hat{R}_{i-1},\hat{B}_i}-\lr{\hat{Q}_{i-1},\hat{C}_i}+\lr{\hat{R}_{i-1},\hat{J}_{i-1}})+\text{Common~terms}~(\clubsuit),\end{equation*}
where $(\clubsuit)$ denotes the common terms of $(\ref{lcommon1})$ and $(\ref{rcommon1})$.

Using $\hat{X}_i-\hat{X}_{i+1}=\hat{A}_i-\hat{A}_{i+1}+\hat{B}_i-\hat{B}_{i+1}+\hat{I}_{i+1}-\hat{I}_{i-1}$, $\hat{X}_i=\hat{A}_i+\hat{B}_i-\hat{I}_i-\hat{I}_{i-1}$, $\hat{M}_i-\hat{M}_{i+1}=\hat{A}_i-\hat{A}_{i+1}+\hat{B}_i-\hat{B}_{i+1}+\hat{C}_i-\hat{C}_{i+1}+\hat{I}_{i+1}-\hat{I}_{i-1}+\hat{J}_{i+1}-\hat{J}_{i-1}$ and $\hat{J}_i=\hat{U}_{i+1}+\hat{W}_{i+1}$, we can obtain that
\begin{equation}\label{zhishub}
\begin{split}
&b=\sum\limits_{i\in\mathbb{Z}_m}(\lr{\hat{A}_i,\hat{B}_i}+\lr{\hat{A}_i-\hat{A}_{i+1}+\hat{B}_i-\hat{B}_{i+1}+\hat{I}_{i+1}-\hat{I}_{i-1},\hat{I}_i}+(\hat{I}_i,\hat{C}_i-\hat{C}_{i+1})
+\\&\lr{\hat{A}_i-\hat{A}_{i+1}+\hat{B}_i-\hat{B}_{i+1}+\hat{C}_i-\hat{C}_{i+1}+\hat{I}_{i+1}-\hat{I}_{i-1}+\hat{U}_{i+2}+\hat{W}_{i+2}-\hat{U}_i-\hat{W}_i,\hat{U}_{i+1}+\hat{W}_{i+1}}
\\&+\lr{\hat{A}_i+\hat{B}_i-\hat{I}_i-\hat{I}_{i-1},\hat{C}_i})+\sum\limits_{i=1}^{m-1}\lr{\hat{I}_{i-1},\hat{I}_i}-\lr{\hat{I}_0,\hat{I}_{m-1}}+\sum\limits_{i=1}^{m-1}\lr{\hat{U}_i+\hat{W}_i,\hat{U}_{i+1}+\hat{W}_{i+1}}\\
&-\lr{\hat{U}_1+\hat{W}_1,\hat{U}_0+\hat{W}_0}-(\hat{U}_0+\hat{W}_0,\hat{I}_0)+\sum\limits_{i=1}^{m-1}(\hat{U}_{i+1}+\hat{W}_{i+1},\hat{I}_{i-1}).
\end{split}
\end{equation}

Using $\hat{Y}_i-\hat{Y}_{i+1}=\hat{B}_i-\hat{B}_{i+1}+\hat{C}_i-\hat{C}_{i+1}+\hat{J}_{i+1}-\hat{J}_{i-1}$, $\hat{Y}_i=\hat{B}_i+\hat{C}_i-\hat{J}_i-\hat{J}_{i-1}$, $\hat{M}_i-\hat{M}_{i+1}=\hat{A}_i-\hat{A}_{i+1}+\hat{B}_i-\hat{B}_{i+1}+\hat{C}_i-\hat{C}_{i+1}+\hat{I}_{i+1}-\hat{I}_{i-1}+\hat{J}_{i+1}-\hat{J}_{i-1}$ and $\hat{I}_i=\hat{Q}_{i}+\hat{R}_{i}$, we can obtain that
\begin{equation}\label{rcommon2}
\begin{split}
&y=\sum\limits_{i\in\mathbb{Z}_m}(\lr{\hat{B}_i,\hat{C}_i}+\lr{\hat{B}_i-\hat{B}_{i+1}+\hat{C}_i-\hat{C}_{i+1}+\hat{J}_{i+1}-\hat{J}_{i-1},\hat{J}_i}+\lr{\hat{A}_i,\hat{B}_i+\hat{C}_i-\hat{J}_i-\hat{J}_{i-1}}
+\\&\lr{\hat{A}_i-\hat{A}_{i+1}+\hat{B}_i-\hat{B}_{i+1}+\hat{C}_i-\hat{C}_{i+1}+\hat{Q}_{i+1}+\hat{R}_{i+1}-\hat{Q}_{i-1}-\hat{R}_{i-1}+\hat{J}_{i+1}-\hat{J}_{i-1},\hat{Q}_i+\hat{R}_i})
\\&+\sum\limits_{i=1}^{m-1}\lr{\hat{J}_{i-1},\hat{J}_i}-\lr{\hat{J}_0,\hat{J}_{m-1}}+\sum\limits_{i=1}^{m-1}\lr{\hat{Q}_{i-1}+\hat{R}_{i-1},\hat{Q}_{i}+\hat{R}_{i}}
-\lr{\hat{Q}_0+\hat{R}_0,\hat{Q}_{m-1}+\hat{R}_{m-1}}\\
&+\sum\limits_{i=1}^{m-1}(\hat{Q}_{i}+\hat{R}_{i},\hat{J}_{i-1})-(\hat{Q}_{m-1}+\hat{R}_{m-1},\hat{J}_0).
\end{split}
\end{equation}

Replacing the notations $I_i, U_i, W_i$ in (\ref{zhishub}) by $Q_i, R_{i-1}, J_{i-1}$, respectively, for each $i\in\mathbb{Z}_m$, we have that
\begin{equation*}b=\sum\limits_{i\in\mathbb{Z}_m}(\lr{\hat{A}_i,\hat{J}_i}-2\lr{\hat{Q}_{i-1},\hat{C}_i})+\text{Common~terms}~(\spadesuit)\end{equation*} and \begin{equation*}y=-\sum\limits_{i\in\mathbb{Z}_m}\lr{\hat{A}_i,\hat{J}_i}+\text{Common~terms}~(\spadesuit),\end{equation*}
where $(\spadesuit)$ denotes the common terms of $(\ref{zhishub})$ and $(\ref{rcommon2})$.

Hence, the exponent in (\ref{zuobian}) is
$2c+b-2\sum\limits_{i\in\mathbb{Z}_m}\lr{\hat{R}_{i-1},\hat{K}_i}$ after we replace the notations $U_i,Z_i$ by $R_{i-1}, K_i$, respectively, which is equal to
\begin{equation}\label{jieshuzuobian}
\begin{split}
\sum\limits_{i\in\mathbb{Z}_m}&(2\lr{\hat{J}_{i-1},\hat{R}_i}-2\lr{\hat{A}_i,\hat{J}_i}-2\lr{\hat{B}_i,\hat{R}_i}+2\lr{\hat{Q}_i,\hat{R}_i}+\lr{\hat{A}_i,\hat{J}_i}-2\lr{\hat{Q}_{i-1},\hat{C}_i}\\
&-2\lr{\hat{R}_{i-1},\hat{B}_i-\hat{Q}_i-\hat{J}_{i-1}})+2(\clubsuit)+(\spadesuit)
\end{split}
\end{equation}
and the exponent in (\ref{youbian}) is
$2z+y-2\sum\limits_{i\in\mathbb{Z}_m}\lr{\hat{K}_{i},\hat{R}_i}$, which is equal to
\begin{equation}\label{jieshuyoubian}
\begin{split}
\sum\limits_{i\in\mathbb{Z}_m}&(2\lr{\hat{R}_{i-1},\hat{Q}_i}-2\lr{\hat{R}_{i-1},\hat{B}_i}-2\lr{\hat{Q}_{i-1},\hat{C}_i}+2\lr{\hat{R}_{i-1},\hat{J}_{i-1}}-\lr{\hat{A}_i,\hat{J}_i}\\&-2\lr{\hat{B}_i-\hat{Q}_i-\hat{J}_{i-1},\hat{R}_i})
+2(\clubsuit)+(\spadesuit).
\end{split}
\end{equation}
It is easy to see that the formulas (\ref{jieshuzuobian}) and (\ref{jieshuyoubian}) are equal.
Hence, the exponents in (\ref{zuobian}) and (\ref{youbian}) are also the same.
Therefore, we complete the proof.
\end{proof}

For each $i\in\mathbb{Z}_m$, let $\lambda_i:{\mathcal {H}}_{\rm{tw}}^{\rm e}(\mathcal{A})\hookrightarrow\mathcal {D}\mathcal {H}_m^{\rm e}(\A)$ be the injective linear map defined on basis elements by $u_{[M]}K_\alpha\mapsto u_{M[i]}K_{\alpha,i}$.
\begin{lemma}
If $m>1$, then for each $i\in\mathbb{Z}_m$, the map $\lambda_i$ above is an embedding of algebras.
\end{lemma}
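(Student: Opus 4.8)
The plan is to use the fact that $\lambda_i$ is manifestly injective on basis elements—distinct pairs $([M],\alpha)$ are sent to the distinct basis elements $u_{M[i]}K_{\alpha,i}$ of $\mathcal{DH}_m^{\rm e}(\A)$, namely those supported in the single degree $i$—so the whole content of the lemma is multiplicativity. Since both algebras have distinguished bases, it suffices to verify, for arbitrary basis elements $u_{[A]}K_\alpha$ and $u_{[B]}K_\beta$ of ${\mathcal H}_{\rm tw}^{\rm e}(\A)$, that
\[
\lambda_i(u_{[A]}K_\alpha)\,\lambda_i(u_{[B]}K_\beta)=\lambda_i\big((u_{[A]}K_\alpha)(u_{[B]}K_\beta)\big),
\]
and I would compute each side explicitly.

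For the left-hand side I would specialise the defining product (\ref{exthalmul}) to the data $A_i=A$, $B_i=B$, $\alpha_i=\alpha$, $\beta_i=\beta$ with all other $A_j,B_j,\alpha_j,\beta_j$ equal to $0$. The crucial step is to show that only the summand with every $I_j=0$ survives. For each $j\neq i$ the factor $H_{I_j[1]\oplus A_j,\,B_j\oplus I_{j-1}[-1]}^{M_j}=H_{I_j[1],\,I_{j-1}[-1]}^{M_j}$ is nonzero only if the corresponding triangle, hence the five-term exact sequence (\ref{fiveterm}), exists; for $A_j=B_j=0$ this degenerates to $0\to I_j\to 0\to M_j\to 0\to I_{j-1}\to 0$ and therefore forces $I_j=I_{j-1}=M_j=0$. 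Applying this at every $j\neq i$ kills $I_j$ for all $j\neq i$; applying it once more at $j=i+1$—which is legitimate precisely because $m>1$ makes $i+1\neq i$—additionally forces $I_{(i+1)-1}=I_i=0$. Thus $I_j=0$ for all $j\in\mathbb{Z}_m$ and $M_j=0$ for all $j\neq i$. This vanishing is exactly where the hypothesis $m>1$ enters, and it is the step I expect to be the main obstacle.

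Once all $I_j=0$, the formula (\ref{exthalmul}) collapses: every summand of the $v$-exponent inside the sum carries some $\hat I_\bullet$ and so vanishes, the factor $\prod_j K_{\hat I_j+\alpha_j+\beta_j,j}$ reduces to $K_{\alpha+\beta,i}$, and a short bookkeeping of $a_0$—again using $m>1$, so that $\hat B_{i+1}=0$ and the cross terms $\sum_{j=1}^{m-1}(\alpha_j,\beta_{j-1})$ and $(\alpha_{m-1},\beta_0)$ drop out—leaves $a_0=\lr{\hat A,\hat B}+(\alpha,\hat B)$. The single surviving local factor is $H_{A,B}^{M_i}$; since $A,B,M_i\in\A$ and $\A$ is hereditary we have $\{A,B\}=1$, whence $H_{A,B}^{M_i}=|\Ext^1_{\A}(A,B)_{M_i}|/|\Hom_{\A}(A,B)|$. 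Therefore the left-hand side equals
\[
v^{\lr{\hat A,\hat B}+(\alpha,\hat B)}\sum_{[M_i]\in\Iso(\A)}\frac{|\Ext^1_{\A}(A,B)_{M_i}|}{|\Hom_{\A}(A,B)|}\,u_{M_i[i]}K_{\alpha+\beta,i}.
\]

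For the right-hand side I would compute $(u_{[A]}K_\alpha)(u_{[B]}K_\beta)$ inside ${\mathcal H}_{\rm tw}^{\rm e}(\A)$ using the relations $K_\alpha u_{[B]}=v^{(\alpha,\hat B)}u_{[B]}K_\alpha$, $K_\alpha K_\beta=K_{\alpha+\beta}$ and $u_{[A]}u_{[B]}=v^{\lr{\hat A,\hat B}}\sum_{[L]}\frac{|\Ext^1_{\A}(A,B)_L|}{|\Hom_{\A}(A,B)|}u_{[L]}$, and then apply $\lambda_i$; this reproduces exactly the displayed expression above. Comparing the two computations term by term yields multiplicativity, and since injectivity is built into the definition of $\lambda_i$, we conclude that $\lambda_i$ is an embedding of algebras. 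The essential point throughout is the separation of the degrees $i$ and $i\pm 1$ guaranteed by $m>1$, which both forces the $I_\bullet$ to vanish and trivialises the quadratic exponent bookkeeping.
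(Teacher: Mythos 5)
Your proof is correct and is exactly the verification the paper has in mind: the paper's own proof consists of the single sentence that the claim ``is easily proved by the definition of the multiplication in $\mathcal {D}\mathcal {H}_m^{\rm e}(\A)$,'' and your computation supplies precisely the missing details (the five-term exact sequence forcing all $I_j=0$ when $m>1$, the collapse of the exponent $a_0$ to $\lr{\hat A,\hat B}+(\alpha,\hat B)$, and the identification $H_{A,B}^{M}=|\Ext^1_{\A}(A,B)_{M}|/|\Hom_{\A}(A,B)|$ for objects of $\A$).
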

\begin{proof}
It is easily proved by the definition of the multiplication in $\mathcal {D}\mathcal {H}_m^{\rm e}(\A)$.
\end{proof}

\begin{proposition}\label{mjiaoji}
The $m$-periodic extended derived Hall algebra $\mathcal {D}\mathcal {H}_m^{\rm e}(\A)$ has a basis given by
$$\{\prod\limits_{i\in\mathbb{Z}_m}u_{A_i[i]}\prod\limits_{i\in\mathbb{Z}_m}K_{\alpha_i,i}~|~[A_i]\in {\rm Iso}(\A),\alpha_i\in K(\A)~\text{for~all~}i\in\mathbb{Z}_m\}.$$
\end{proposition}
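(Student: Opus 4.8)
The plan is to show that the proposed set is obtained from the standard basis $\{u_{\osum M_i[i]}\prod_{i\in\mathbb{Z}_m}K_{\alpha_i,i}\}$ by a \emph{unitriangular} change of coordinates, so that it is automatically a basis. The essential observation is that, in the Hall algebra $\mathcal{DH}_m^{\rm e}(\A)$, the product $\prod_{i\in\mathbb{Z}_m}u_{A_i[i]}$ of the one-term generators decomposes, via the multiplication rule (\ref{yschengfa}), as a linear combination in which the term $u_{\osum A_i[i]}\prod_{i\in\mathbb{Z}_m}K_{0,i}$ appears with a nonzero scalar coefficient, while every other term $u_{\osum M_i[i]}\prod_{i\in\mathbb{Z}_m}K_{\hat I_i,i}$ has $\sum_i\hat M_i$ strictly smaller (or the $K$-degrees nonzero). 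First I would iterate (\ref{yschengfa}) to compute $\prod_{i\in\mathbb{Z}_m}u_{A_i[i]}$ explicitly and identify the leading term.

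Concretely, I would proceed by defining a suitable ordering on the standard basis. For an element $u_{\osum M_i[i]}\prod_i K_{\alpha_i,i}$, record the total class $\sum_{i\in\mathbb{Z}_m}\hat M_i\in K(\A)$ together with the tuple $(\alpha_i)$. When the generators $u_{A_i[i]}$ are multiplied together using (\ref{yschengfa}), the summation index runs over $[I_i],[M_i]$, and the structure constant $\prod_i H_{I_i[1]\oplus A_i,\,A_i\oplus I_{i-1}[-1]}^{M_i}/a_{I_i}$ (here with the middle objects being the $A_i$ themselves in the iterated product) vanishes unless the $I_i$ are ``compatible'' with a genuine triangle; taking all $I_i=0$ forces $M_i\cong A_i$ and contributes the leading term $u_{\osum A_i[i]}\prod_i K_{0,i}$ with coefficient $1$ (since $H^{A_i}_{A_i,A_i}=1$ and $a_0=1$, and the prefactor $v^{\sum\lr{\hat A_i,\hat B_i}}$ reduces to a harmless power of $v$). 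All other summands carry at least one $I_i\neq0$, hence a nontrivial $K$-element $K_{\hat I_i,i}$, so they lie strictly below in the ordering. This shows the transition matrix from the standard basis to the proposed set is upper unitriangular with invertible diagonal entries (powers of $v$).

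The main obstacle I anticipate is \emph{bookkeeping the $K$-elements}: because the $K_{\alpha_i,i}$ do not commute with the $u$-generators (see (\ref{kujiaohuan})) and only commute among themselves up to the scalars in (\ref{Kjiaohuan}) and (\ref{kkjiaohuan}), I must be careful that reordering an arbitrary word $\prod_i u_{A_i[i]}\prod_i K_{\alpha_i,i}$ into standard form only introduces invertible scalar factors and never mixes the leading term with lower ones. I would handle this by first moving all $K$-elements to the right using (\ref{kujiaohuan}), which costs only a power of $v$ and does not change the leading $u$-term, and then applying the unitriangularity established above. Once the transition matrix is seen to be triangular with invertible diagonal, the proposed set has the same cardinality as the standard basis and spans the algebra, hence is itself a basis; this completes the argument. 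A clean way to phrase the conclusion is to appeal to the embeddings $\lambda_i$ of the previous lemma, which guarantee that the individual factors $u_{A_i[i]}$ and $K_{\alpha_i,i}$ behave as expected inside $\mathcal{DH}_m^{\rm e}(\A)$.
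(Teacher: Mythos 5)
Your overall strategy --- showing that $\prod_i u_{A_i[i]}\prod_i K_{\alpha_i,i}$ equals $u_{\osum A_i[i]}\prod_i K_{\alpha_i,i}$ plus ``lower'' correction terms and concluding by triangularity --- is the same as the paper's, and your identification of the leading term (all $I_i=0$, coefficient $1$) is correct. For $m>2$ the only interaction in the iterated product occurs at the last step, between $A_0$ and $A_{m-1}[m-1]$, so the corrections are indexed by nonzero images $I_{m-1}$ of maps $A_0\to A_{m-1}$, exactly as in the paper's equation (\ref{basis}).

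The gap is in the ordering that is supposed to make the transition matrix triangular. You propose to order by the total class $\sum_i\hat M_i\in K(\A)$ together with the tuple $(\alpha_i)$, declaring a term lower if this class is ``strictly smaller'' or the $K$-degrees are nonzero. This does not define a usable order. First, $K(\A)$ carries no canonical well-founded partial order for a general finitary hereditary $\A$ (the paper does not assume $\A$ is a length category), and both the induction needed for spanning and the invertibility of the infinite transition matrix require well-foundedness, i.e.\ the absence of infinite strictly descending chains. Second, the clause ``$K$-degrees nonzero'' is not consistent across the whole basis: the $K$-part of a correction term is $K_{\hat I_{m-1}+\alpha_{m-1},m-1}$, and since $\alpha_{m-1}$ ranges over all of $K(\A)$ this can coincide with the $K$-part of the leading term of a \emph{different} element of your proposed set, so one cannot separate leading from non-leading terms by whether the $K$-index vanishes. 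The paper resolves precisely this point by introducing the non-negative integer
\begin{equation*}
\delta=\dim_k\Ext_{D_m(\A)}^1(\widetilde{A}_0\oplus \widetilde{A}_{m-1}[m-1],\widetilde{A}_0\oplus \widetilde{A}_{m-1}[m-1])
\end{equation*}
and invoking \cite[Lemma 4.2]{RuanZ}: the triangle $\widetilde{A}_0\to\widetilde{A}_{m-1}\to\widetilde{M}_0[1]\oplus\widetilde{M}_{m-1}\to\widetilde{A}_0[1]$ attached to a correction term is non-split exactly when $I_{m-1}\neq0$, whence $\delta$ strictly decreases on every correction term; since $\delta\in\mathbb{Z}_{\geq0}$, the induction terminates and both linear independence and spanning follow. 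Supplying such a strictly decreasing integer invariant is the missing ingredient; your handling of the $K$-elements via (\ref{kujiaohuan}) is fine and is not where the difficulty lies.
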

\begin{proof}
If $m=1$, it is clear. If $m=2$, it is proved by \cite[Proposition 4.2]{Zhang} and the equation (\ref{kujiaohuan}).
Now, we prove for the case $m>2$ in a similar way as \cite{Zhang}.

For any $[A_i]\in {\rm Iso}(\A),\alpha_i\in K(\A)$ with $i\in\mathbb{Z}_m$, define \begin{flalign*}\delta(u_{\bigoplus\limits_{i\in\mathbb{Z}_m}A_i[i]}\prod\limits_{i\in\mathbb{Z}_m}K_{\alpha_i,i})&=\dim_k\Ext_{D_m(\A)}^1(\widetilde{A}_0\oplus \widetilde{A}_{m-1}[m-1],\widetilde{A}_0\oplus \widetilde{A}_{m-1}[m-1])\\&=\dim_k\Ext_{D_m(\A)}^1(\widetilde{A}_0[1]\oplus \widetilde{A}_{m-1},\widetilde{A}_0[1]\oplus \widetilde{A}_{m-1}).\end{flalign*}

If $\delta(u_{\bigoplus\limits_{i\in\mathbb{Z}_m}A_i[i]}\prod\limits_{i\in\mathbb{Z}_m}K_{\alpha_i,i})=0$, we have that $\Hom_{\A}(A_0,A_{m-1})=0$, since
\begin{flalign*}&\Ext_{D_m(\A)}^1(\widetilde{A}_0\oplus \widetilde{A}_{m-1}[m-1],\widetilde{A}_0\oplus \widetilde{A}_{m-1}[m-1])\\&\cong\Hom_{\A}(A_0, A_{m-1})\oplus\Ext^1_{\A}(A_0,A_0)\oplus\Ext^1_{\A}(A_{m-1},A_{m-1}).\end{flalign*}
Thus, by (\ref{yschengfa}), we obtain that $$\prod\limits_{i\in\mathbb{Z}_m}u_{A_i[i]}\prod\limits_{i\in\mathbb{Z}_m}K_{\alpha_i,i}=u_{\bigoplus\limits_{i=0}^{m-2}A_i[i]}u_{A_{m-1}[m-1]} \prod\limits_{i\in\mathbb{Z}_m}K_{\alpha_i,i}=u_{\osum A_i[i]}\prod\limits_{i\in\mathbb{Z}_m}K_{\alpha_i,i}.$$

If $\delta(u_{\bigoplus\limits_{i\in\mathbb{Z}_m}A_i[i]}\prod\limits_{i\in\mathbb{Z}_m}K_{\alpha_i,i})>0$, by (\ref{yschengfa}), we have that
\begin{equation}\label{basis}
\begin{split}
&\prod\limits_{i\in\mathbb{Z}_m}u_{A_i[i]}\prod\limits_{i\in\mathbb{Z}_m}K_{\alpha_i,i}=u_{\osum A_i[i]}\prod\limits_{i\in\mathbb{Z}_m}K_{\alpha_i,i}+\sum\limits_{[I_{m-1}]\neq[0],[M_0],[M_{m-1}]}v^{\lr{\hat{M}_{m-1}-\hat{M}_0,\hat{I}_{m-1}}}\\
&\quad\quad\frac{H_{A_0,I_{m-1}[-1]}^{M_0}H_{I_{m-1}[1],A_{m-1}}^{M_{m-1}}}{a_{I_{m-1}}}
u_{M_0\bigoplus\bigoplus\limits_{i=1}^{m-2}A_i[i]\bigoplus M_{m-1}[m-1]}K_{\hat{I}_{m-1},m-1}\prod\limits_{i\in\mathbb{Z}_m}K_{\alpha_i,i}.\end{split}
\end{equation}
For any $[I_{m-1}],[M_0],[M_{m-1}]\in\Iso(\A)$, if $H_{A_0,I_{m-1}[-1]}^{M_0}H_{I_{m-1}[1],A_{m-1}}^{M_{m-1}}\neq0$, then we have an exact sequence in $\A$
$$\xymatrix@R=0.8pc{0\ar[r]&M_0\ar[r]&A_0\ar[rr]^-{f}\ar@{->>}[rd]&&A_{m-1}\ar[r]&M_{m-1}\ar[r]&0.\\
&&&I_{m-1}\ar@{>->}[ru]&&&}$$
Thus, we obtain a triangle in $D^b(\A)$
$$\xymatrix{A_0\ar[r]^-f&A_{m-1}\ar[r]&M_0[1]\oplus M_{m-1}\ar[r]&A_0[1],}$$ which gives a triangle in $D_m(\A)$
\begin{equation}\label{kelie}
\xymatrix{\widetilde{A}_0\ar[r]^-{\widetilde{f}}&\widetilde{A}_{m-1}\ar[r]&\widetilde{M}_0[1]\oplus \widetilde{M}_{m-1}\ar[r]&\widetilde{A}_0[1].}\end{equation}
By \cite[Lemma 4.2]{RuanZ}, we have that $$\dim_k\Ext_{D_m(\A)}^1(\widetilde{M}_0[1]\oplus \widetilde{M}_{m-1},\widetilde{M}_0[1]\oplus \widetilde{M}_{m-1})\leq\dim_k\Ext_{D_m(\A)}^1(\widetilde{A}_0[1]\oplus \widetilde{A}_{m-1},\widetilde{A}_0[1]\oplus \widetilde{A}_{m-1})$$
and the equality holds if and only if the triangle (\ref{kelie}) splits if and only if $\widetilde{f}=0$, which is also equivalent to $f=0$, since $\A$ is fully embedded into $D_m(\A)$ for $m>2$.

Hence, for each term in the sum $\sum$ of (\ref{basis}), its degree $\delta$ is less than $\delta(u_{\osum A_i[i]}\prod\limits_{i\in\mathbb{Z}_m}K_{\alpha_i,i})$. Since $\{u_{\bigoplus\limits_{i\in\mathbb{Z}_m}A_i[i]}\prod\limits_{i\in\mathbb{Z}_m}K_{\alpha_i,i}~|~[A_i]\in {\rm Iso}(\A),\alpha_i\in K(\A)~\text{for~all~}i\in\mathbb{Z}_m\}$ is a basis of $\mathcal {D}\mathcal {H}_m^{\rm e}(\A)$, according to (\ref{basis}), we obtain that $$\mathfrak{B}:=\{\prod\limits_{i\in\mathbb{Z}_m}u_{A_i[i]}\prod\limits_{i\in\mathbb{Z}_m}K_{\alpha_i,i}~|~[A_i]\in {\rm Iso}(\A),\alpha_i\in K(\A)~\text{for~all~}i\in\mathbb{Z}_m\}$$ is a linear independent set. By induction on $\delta(u_{\osum A_i[i]}\prod\limits_{i\in\mathbb{Z}_m}K_{\alpha_i,i})$ for all $[A_i]\in {\rm Iso}(\A),\alpha_i\in K(\A)$, we get that $u_{\osum A_i[i]}\prod\limits_{i\in\mathbb{Z}_m}K_{\alpha_i,i}$ can be written as a linear combination of elements in $\mathfrak{B}$. Therefore, $\mathfrak{B}$ is a basis of $\mathcal {D}\mathcal {H}_m^{\rm e}(\A)$.
\end{proof}
\begin{corollary}\label{mcopy}
The multiplication map induces an isomorphism of vector spaces
\begin{equation}\begin{aligned}
&\mu:\bigotimes\limits_{i\in\mathbb{Z}_m}{\mathcal {H}}_{\rm{tw}}^{\rm e}(\mathcal{A})\longrightarrow \mathcal {D}\mathcal {H}_m^{\rm e}(\A),
&\bigotimes\limits_{i\in\mathbb{Z}_m}x_i\longmapsto \prod\limits_{i\in\mathbb{Z}_m}\lambda_i(x_i).
\end{aligned}\end{equation}
\end{corollary}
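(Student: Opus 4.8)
The plan is to show that $\mu$ carries a distinguished basis of the domain bijectively, up to nonzero scalars, onto the basis $\mathfrak{B}$ of $\mathcal {D}\mathcal {H}_m^{\rm e}(\A)$ furnished by Proposition \ref{mjiaoji}. Since $\mathcal{H}_{\rm tw}^{\rm e}(\A)$ has the basis $\{u_{[A]}K_\alpha \mid [A]\in\Iso(\A),\,\alpha\in K(\A)\}$, the tensor product $\bigotimes_{i\in\mathbb{Z}_m}\mathcal{H}_{\rm tw}^{\rm e}(\A)$ has the basis $\{\bigotimes_{i\in\mathbb{Z}_m}(u_{[A_i]}K_{\alpha_i})\}$ indexed by tuples $((A_i),(\alpha_i))_{i\in\mathbb{Z}_m}$. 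The map $\mu$ is well defined because multiplication in $\mathcal {D}\mathcal {H}_m^{\rm e}(\A)$ is multilinear and each $\lambda_i$ is linear, and by the definition of $\lambda_i$ it sends such a basis vector to $\prod_{i\in\mathbb{Z}_m}(u_{A_i[i]}K_{\alpha_i,i})$.

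The first step is to rewrite this image in the normal form occurring in $\mathfrak{B}$. Using the commutation relation (\ref{kujiaohuan}) repeatedly, I move every $K$-factor to the right past the $u$-factors; each such move contributes only a nonzero power of $v$ and leaves the objects untouched. Then, by (\ref{Kjiaohuan}), I merge the gathered factors $K_{\alpha_i,i}$ into $\prod_{i\in\mathbb{Z}_m}K_{\alpha_i,i}$, again at the cost of a nonzero scalar. This yields an identity
\begin{equation*}
\mu\Big(\bigotimes_{i\in\mathbb{Z}_m}(u_{[A_i]}K_{\alpha_i})\Big)=v^{\,c}\prod_{i\in\mathbb{Z}_m}u_{A_i[i]}\prod_{i\in\mathbb{Z}_m}K_{\alpha_i,i},
\end{equation*}
for an exponent $c=c((A_i),(\alpha_i))$ depending only on the indexing data, whose right-hand side is precisely the element of $\mathfrak{B}$ attached to $((A_i),(\alpha_i))$.

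The assignment $((A_i),(\alpha_i))\mapsto \prod_{i\in\mathbb{Z}_m}u_{A_i[i]}\prod_{i\in\mathbb{Z}_m}K_{\alpha_i,i}$ is a bijection between the index set of the domain basis and $\mathfrak{B}$, and every scalar $v^{c}$ is nonzero, so $\mu$ sends the chosen basis of $\bigotimes_{i\in\mathbb{Z}_m}\mathcal{H}_{\rm tw}^{\rm e}(\A)$ to a family that differs from $\mathfrak{B}$ only by nonzero scalars. Since $\mathfrak{B}$ is a basis of $\mathcal {D}\mathcal {H}_m^{\rm e}(\A)$ by Proposition \ref{mjiaoji}, $\mu$ maps a basis bijectively onto a basis and is therefore an isomorphism of vector spaces; for $m=1$ no commutation is needed and $\mu=\lambda_0$ already sends the basis onto $\mathfrak{B}$. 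I do not expect a genuine obstacle here: the argument is routine once one invokes the commutation relations, and the only point requiring care is the bookkeeping that confirms commuting and merging the $K$-elements never alters the tuple $((A_i),(\alpha_i))$ and only ever introduces a nonzero power of $v$, so that the induced map on index sets is exactly the identifying bijection with $\mathfrak{B}$.
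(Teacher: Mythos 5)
Your proof is correct and follows essentially the same route the paper intends: the corollary is stated without proof precisely because it is the combination of Proposition \ref{mjiaoji} with the commutation relations (\ref{Kjiaohuan}) and (\ref{kujiaohuan}), which show that $\mu$ sends the standard basis of pure tensors to nonzero scalar multiples of the basis $\mathfrak{B}$, bijectively on index sets.
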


\section{Odd periodic derived Hall algebras}
In this section, we assume that $m$ is an odd positive integer unless otherwise stated. We define another Hall algebra for $D_m(\A)$ without appending the $K$-elements in the basis elements.

\begin{definition}\label{jidef}
The Hall algebra $\mathcal {D}\mathcal {H}_m(\A)$, called the {\em $m$-periodic derived Hall algebra} of $\A$, is the $\mathbb{C}$-vector space with the basis $\{u_{\bigoplus\limits_{i\in\mathbb{Z}_m}M_i[i]}~|~[M_i]\in {\rm Iso}(\A)~\text{for~all~}i\in\mathbb{Z}_m\}$, and with the multiplication defined on basis elements by
{\begin{equation}\begin{split}&u_{\bigoplus\limits_{i\in\mathbb{Z}_m}A_i[i]}u_{\bigoplus\limits_{i\in\mathbb{Z}_m}B_i[i]}\\&=
v^{\zsum\lr{\sum\limits_{k=0}^{m-1}(-1)^k\hat{A}_{i+k},\hat{B}_i}}\sum\limits_{[I_i],[M_i]\in{\rm Iso}(\A), i\in\mathbb{Z}_m}
\prod\limits_{i\in\mathbb{Z}_m}\frac{H_{I_i[1]\oplus A_i,B_i\oplus I_{i-1}[-1]}^{M_i}}{a_{I_i}} u_{\bigoplus\limits_{i\in\mathbb{Z}_m}M_i[i]}.\end{split}\end{equation}}
\end{definition}
\begin{theorem}\label{jimain}
The $m$-periodic derived Hall algebra $\mathcal {D}\mathcal {H}_m(\A)$ is an associative algebra.
\end{theorem}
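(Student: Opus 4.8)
The plan is to deduce associativity of $\mathcal {D}\mathcal {H}_m(\A)$ from that of the extended algebra (Theorem \ref{mainresult0}) rather than to repeat the long triple-product computation. The key observation is that the two multiplications share exactly the same ``Hall-number part'': in both Definition \ref{maindef} and Definition \ref{jidef} the structure constants are built from the identical factors $\zprod\frac{H_{I_i[1]\oplus A_i,\,B_i\oplus I_{i-1}[-1]}^{M_i}}{a_{I_i}}$, and they differ only by the power of $v$ in front and by the trailing $K$-elements. I would therefore look for an injective algebra homomorphism
$$\Phi\colon \mathcal {D}\mathcal {H}_m(\A)\longrightarrow \mathcal {D}\mathcal {H}_m^{\rm e}(\A)_{1/2},\qquad u_{\osum M_i[i]}\longmapsto v^{\rho(\{\hat M_j\})}\,u_{\osum M_i[i]}\zprod K_{\gamma_i(\{\hat M_j\}),\,i},$$
where $\mathcal {D}\mathcal {H}_m^{\rm e}(\A)_{1/2}$ is the variant of the extended algebra in which the labels $\alpha$ of $K_\alpha$ are allowed to lie in $\frac{1}{2}K(\A)$ and $\lr{\cdot,\cdot},(\cdot,\cdot)$ are extended $\mathbb{Q}$-bilinearly; its associativity is given by the very same proof as Theorem \ref{mainresult0}, which uses only bilinearity of these forms. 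Once $\Phi$ is shown to be an injective homomorphism, associativity of $\mathcal {D}\mathcal {H}_m(\A)$ is immediate: since the $u_{\osum M_i[i]}$ form a linear basis it suffices to check $\Phi(u_Au_B)=\Phi(u_A)\Phi(u_B)$ on pairs, and then $\Phi((xy)z)=(\Phi x\,\Phi y)\Phi z=\Phi x(\Phi y\,\Phi z)=\Phi(x(yz))$ for all $x,y,z$, whence injectivity forces $(xy)z=x(yz)$.

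The linear data $\gamma_i$ and the quadratic form $\rho$ are pinned down by the condition $\Phi(u_Au_B)=\Phi(u_A)\Phi(u_B)$. Computing $\Phi(u_A)\Phi(u_B)$ with (\ref{yschengfa}), (\ref{kujiaohuan}) and (\ref{Kjiaohuan}) and comparing, for each output $u_{\osum M_i[i]}$, the resulting $K$-label with $\gamma_i(\{\hat M_j\})$ yields (using $\hat M_i=\hat A_i+\hat B_i-\hat I_i-\hat I_{i-1}$) the cyclic system $\gamma_i(\{-\hat I_j-\hat I_{j-1}\})=\hat I_i$; writing $\gamma_i(\{x_j\})=\sum_j c_{ij}x_j$ this becomes $c_{ik}+c_{i,k+1}=-\delta_{ik}$. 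This is precisely where the oddness of $m$ enters: the matrix $1+S$ of the cyclic shift $S$ has determinant $1-(-1)^m$, which is nonzero exactly when $m$ is odd, so the $c_{ij}$ exist and are unique (they turn out to be half-integers, which is why the enlargement to $\frac{1}{2}K(\A)$ is needed). Equivalently, this invertibility is the coherence around the cycle of the alternating sum $\mathcal S(\hat A)_i:=\sum_{k=0}^{m-1}(-1)^k\hat A_{i+k}$ appearing in Definition \ref{jidef}, governed by the identity $\mathcal S(\hat A)_i+\mathcal S(\hat A)_{i+1}=2\hat A_i$, valid for $m$ odd.

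With $\gamma_i$ fixed, matching the powers of $v$ forces $\rho$ to satisfy $\rho(\{\hat M_j\})-\rho(\{\hat A_j\})-\rho(\{\hat B_j\})=\Delta(A,B,I)$, where $\Delta$ collects the prefactor $\zsum\lr{\hat A_i,\hat B_i}$ of (\ref{yschengfa}), its internal exponent $\zsum\lr{\hat M_i-\hat M_{i+1},\hat I_i}+\sum_{i=1}^{m-1}\lr{\hat I_{i-1},\hat I_i}-\lr{\hat I_0,\hat I_{m-1}}$, the commutation factors produced by (\ref{kujiaohuan}) and (\ref{Kjiaohuan}), minus the target prefactor $\zsum\lr{\mathcal S(\hat A)_i,\hat B_i}$. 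The substantive step is to check that this equation is consistent, i.e. that after inserting $\hat M_i=\hat A_i+\hat B_i-\hat I_i-\hat I_{i-1}$ together with the explicit $c_{ij}$, the right-hand side is the difference of values of a single quadratic form $\rho$ in the classes $\hat M_j$; here the identity $\mathcal S(\hat A)_i+\mathcal S(\hat A)_{i+1}=2\hat A_i$ is applied repeatedly to eliminate the genuine $\hat I$-dependence. Injectivity of $\Phi$ is then clear, as distinct basis vectors map to nonzero scalar multiples of distinct basis vectors.

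I expect the main obstacle to be exactly this consistency check for $\rho$: unlike the $K$-labels, which are controlled by the clean linear system above, the $v$-exponent is a quadratic identity in the three independent classes $\hat A_i,\hat B_i,\hat I_i$, and one must confirm that every monomial — in particular the terms quadratic in the $\hat I_i$ coming from the internal exponent and from merging the $K$-elements — is absorbed by an honest quadratic form in the $\hat M_j$. If one prefers to avoid the half-integer enlargement, the identical content can be reached by a direct computation paralleling the proof of Theorem \ref{mainresult0}: expand $(u_Au_B)u_C$ and $u_A(u_Bu_C)$, observe that the $F$-products, the powers of $q$ and the automorphism factors are literally those appearing in (\ref{zuobian}) and (\ref{youbian}) (these depend only on $\A$, not on the chosen normalization), so that equality of the two sides reduces to equality of the accompanying powers of $v$, which is again settled by the odd-$m$ identity for $\mathcal S$.
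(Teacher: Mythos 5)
Your primary route -- transporting associativity from Theorem \ref{mainresult0} along an injective algebra map $\Phi\colon u_{\osum M_i[i]}\mapsto v^{\rho}u_{\osum M_i[i]}\zprod K_{\gamma_i,i}$ into a half-integer extension of $\mathcal {D}\mathcal {H}_m^{\rm e}(\A)$ -- is genuinely different from the paper's, and its linear part is sound: the system $c_{ik}+c_{i,k+1}=-\delta_{ik}$, the determinant $1-(-1)^m$ of $1+S$, and hence the role of oddness of $m$ are all correctly identified, and injectivity on basis vectors is indeed automatic once $\Phi$ is multiplicative. But the proposal has a genuine gap exactly where you flag it: the existence of the quadratic form $\rho$. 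The required identity $\rho(\{\hat M_j\})-\rho(\{\hat A_j\})-\rho(\{\hat B_j\})=\Delta(A,B,I)$ must hold for \emph{all} choices of $A_i,B_i,I_i$, and this forces several mutually overdetermined conditions: the $\hat I=0$ specialization of $\Delta$ must be a \emph{symmetric} bilinear function of $(\hat A,\hat B)$ (not automatic, since it mixes the non-symmetric Euler form $\lr{\cdot,\cdot}$ with the symmetric form $(\cdot,\cdot)$ through the $\gamma_i$); the $(\hat A,\hat I)$- and $(\hat B,\hat I)$-cross terms and the $\hat I$-quadratic terms of $\Delta$ must then reproduce the polarization of that same form composed with $L\colon(\hat I_j)\mapsto(-\hat I_j-\hat I_{j-1})$. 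None of these checks is carried out, and they are not routine bookkeeping -- they are precisely where all the content of the theorem sits, playing the role of the exponent comparisons $b,y,c,z$ and $(\ref{jieshuzuobian2})=(\ref{jieshuyoubian2})$ in the paper. Until they are verified, it is not even established that a map $\Phi$ of the proposed shape exists.

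Your fallback is essentially the paper's actual proof: expand both triple products, observe that the $F$-products, $q$-powers and automorphism factors coincide with those in (\ref{zuobian}) and (\ref{youbian}), and reduce to a comparison of $v$-exponents settled by the odd-$m$ identities $\zsum\lr{\sum_{k=0}^{m-1}(-1)^k(\hat{Q}_{i+k}+\hat{Q}_{i+k-1}),\hat{C}_i}=2\zsum\lr{\hat{Q}_{i-1},\hat{C}_i}$ and $\zsum\lr{\sum_{k=0}^{m-1}(-1)^k\hat{A}_{i+k},\hat{J}_i+\hat{J}_{i-1}}=2\zsum\lr{\hat{A}_i,\hat{J}_i}$. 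You name the right mechanism, but the decisive computation -- that the two resulting exponents (\ref{jieshuzuobian2}) and (\ref{jieshuyoubian2}) actually agree after the change of summation variables -- is asserted rather than performed. So as written, neither route constitutes a complete proof; the first needs the consistency of $\rho$ established (or refuted), and the second needs the exponent computation done.
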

\begin{proof}
We only need to prove that
\begin{equation}\label{rjhl2}
(u_{\bigoplus\limits_{i\in\mathbb{Z}_m}A_i[i]}u_{\bigoplus\limits_{i\in\mathbb{Z}_m}B_i[i]})u_{\bigoplus\limits_{i\in\mathbb{Z}_m}C_i[i]}
=u_{\bigoplus\limits_{i\in\mathbb{Z}_m}A_i[i]}(u_{\bigoplus\limits_{i\in\mathbb{Z}_m}B_i[i]}u_{\bigoplus\limits_{i\in\mathbb{Z}_m}C_i[i]})\end{equation}
for any $A_i, B_i, C_i\in\A$ with $i\in\mathbb{Z}_m$.

On the one hand,
\begin{flalign*}
\mbox{LHS~~of}~~(\ref{rjhl2})=
\sum\limits_{[I_i],[X_i]\in{\rm Iso}(\A),i\in\mathbb{Z}_m}v^{a}\zprod\frac{H_{I_i[1]\oplus A_i,B_i\oplus I_{i-1}[-1]}^{X_i}}{a_{I_i}}u_{\osum X_i[i]}u_{\bigoplus\limits_{i\in\mathbb{Z}_m}C_i[i]},
\end{flalign*}
where $a=\zsum\lr{\sum\limits_{k=0}^{m-1}(-1)^k\hat{A}_{i+k},\hat{B}_i}.$
Thus,
\begin{equation}\label{tuizuobian}
\begin{split}
&\mbox{LHS~~of}~~(\ref{rjhl2})=\\
&\sum\limits_{[I_i],[X_i],[J_i],[M_i]\in{\rm Iso}(\A), i\in\mathbb{Z}_m}v^{b}\zprod\frac{H_{I_i[1]\oplus A_i,B_i\oplus I_{i-1}[-1]}^{X_i}}{a_{I_i}}
\zprod\frac{H_{J_i[1]\oplus X_i,C_i\oplus J_{i-1}[-1]}^{M_i}}{a_{J_i}} u_{\osum M_i[i]},
\end{split}\end{equation}
where $b=a+\zsum\lr{\sum\limits_{k=0}^{m-1}(-1)^k\hat{X}_{i+k},\hat{C}_i}.$

Using Lemma \ref{hallzhuan}(1), we obtain that
\begin{flalign*}
&\mbox{LHS~~of}~~(\ref{rjhl2})=
\sum\limits_{[I_i],[X_i],[J_i],[M_i]\in{\rm Iso}(\A),i\in\mathbb{Z}_m}v^{b}q^c\zprod\frac{a_{A_i}a_{B_i}a_{I_{i-1}}}{a_{X_i}}\zprod F_{A_i,I_{i-1}[-1],I_i[1],B_i}^{X_i}\\
&\quad\quad\quad\quad\quad\quad\quad\quad\quad\quad\quad \zprod\frac{a_{X_i}a_{C_i}a_{J_{i-1}}}{a_{M_i}}\zprod F_{X_i,J_{i-1}[-1],J_i[1],C_i}^{M_i}
u_{\osum M_i[i]},
\end{flalign*}
where $c=\zsum (\lr{\hat{I}_{i-1},\hat{I}_i}-\lr{\hat{A}_i,\hat{I}_i}-\lr{\hat{I}_{i-1},\hat{B}_i}+\lr{\hat{J}_{i-1},\hat{J}_i}-\lr{\hat{X}_i,\hat{J}_i}-\lr{\hat{J}_{i-1},\hat{C}_i})$.

Similar to the proof of Theorem \ref{mainresult0}, we can get that
\begin{equation}\label{zuobian2}
\begin{split}
&\mbox{LHS~~of}~~(\ref{rjhl2})=
\sum\limits_{[I_i],[M_i],[T_i],[U_i],[V_i],[W_i],[Z_i]\in{\rm Iso}(\A),i\in\mathbb{Z}_m}v^{b}q^{c-\sum\limits_{i\in\mathbb{Z}_m}\lr{\hat{U}_i,\hat{Z}_i}}\zprod\frac{a_{I_{i-1}}a_{T_i}a_{U_i}a_{V_i}a_{W_i}a_{Z_i}}{a_{M_i}}\\
&\quad\quad\quad\quad\quad\quad\quad\quad\zprod(F_{I_{i-1},U_{i},V_i}^{A_i}F_{W_i,Z_i,I_i}^{B_i}F_{T_i,U_{i+1},W_{i+1}}^{C_i}F_{V_i,Z_i,T_i}^{M_i}) u_{\osum M_i[i]}.
\end{split}
\end{equation}

On the other hand,
\begin{flalign*}
\mbox{RHS~~of}~~(\ref{rjhl2})=
\sum\limits_{[J_i],[Y_i]\in{\rm Iso}(\A),i\in\mathbb{Z}_m}v^{x}\zprod\frac{H_{J_i[1]\oplus B_i,C_i\oplus J_{i-1}[-1]}^{Y_i}}{a_{J_i}}u_{\bigoplus\limits_{i\in\mathbb{Z}_m}A_i[i]}u_{\osum Y_i[i]},
\end{flalign*}
where $x=\zsum\lr{\sum\limits_{k=0}^{m-1}(-1)^k\hat{B}_{i+k},\hat{C}_i}.$ Thus,
\begin{equation}\label{tuiyoubian}
\begin{split}
&\mbox{RHS~~of}~~(\ref{rjhl2})=\\
&\sum\limits_{[I_i],[Y_i],[J_i],[M_i]\in{\rm Iso}(\A),i\in\mathbb{Z}_m}v^{y}\zprod\frac{H_{I_i[1]\oplus A_i,Y_i\oplus I_{i-1}[-1]}^{M_i}}{a_{I_i}}
\zprod\frac{H_{J_i[1]\oplus B_i,C_i\oplus J_{i-1}[-1]}^{Y_i}}{a_{J_i}} u_{\osum M_i[i]},
\end{split}\end{equation}
where $y=x+\zsum\lr{\sum\limits_{k=0}^{m-1}(-1)^k\hat{A}_{i+k},\hat{Y}_i}.$

Using Lemma \ref{hallzhuan}(1), we obtain that
\begin{flalign*}
&\mbox{RHS~~of}~~(\ref{rjhl2})=
\sum\limits_{[I_i],[Y_i],[J_i],[M_i]\in{\rm Iso}(\A),i\in\mathbb{Z}_m}v^{y}q^z\zprod\frac{a_{A_i}a_{Y_i}a_{I_{i-1}}}{a_{M_i}}\zprod F_{A_i,I_{i-1}[-1],I_i[1],Y_i}^{M_i}\\
&\quad\quad\quad\quad\quad\quad\quad\quad\quad\quad\quad \zprod\frac{a_{B_i}a_{C_i}a_{J_{i-1}}}{a_{Y_i}}\zprod F_{B_i,J_{i-1}[-1],J_i[1],C_i}^{Y_i}
u_{\osum M_i[i]},
\end{flalign*}
where $z=\zsum (\lr{\hat{I}_{i-1},\hat{I}_i}-\lr{\hat{A}_i,\hat{I}_i}-\lr{\hat{I}_{i-1},\hat{Y}_i}+\lr{\hat{J}_{i-1},\hat{J}_i}-\lr{\hat{B}_i,\hat{J}_i}-\lr{\hat{J}_{i-1},\hat{C}_i})$.

Similar to the proof of Theorem \ref{mainresult0}, we can get that
\begin{equation}\label{youbian2}
\begin{split}
&\mbox{RHS~~of}~~(\ref{rjhl2})=
\sum\limits_{[J_i],[M_i],[N_i],[K_i],[P_i],[Q_i],[R_i]\in{\rm Iso}(\A),i\in\mathbb{Z}_m}v^{y}q^{z-\sum\limits_{i\in\mathbb{Z}_m}\lr{\hat{K}_i,\hat{R}_i}}\zprod\frac{a_{J_{i-1}}a_{N_i}a_{K_i}a_{P_i}a_{Q_i}a_{R_i}}{a_{M_i}}\\
&\quad\quad\quad\quad\quad\quad \zprod(F_{Q_{i-1},R_{i-1},N_i}^{A_i}F_{J_{i-1},K_i,Q_i}^{B_i}F_{P_i,R_{i},J_{i}}^{C_i}F_{N_i,K_i,P_i}^{M_i}) u_{\osum M_i[i]}.
\end{split}
\end{equation}

Replacing the notations $I_i, T_i, U_i, V_i, W_i, Z_i$ in (\ref{zuobian2}) by $Q_i, P_i, R_{i-1}, N_i, J_{i-1}, K_i$, respectively, for each $i\in\mathbb{Z}_m$,
we conclude that all terms in (\ref{zuobian2}) are the same as those in (\ref{youbian2}) except the exponents of $v$ and $q$.

After replacing the notations, similar to the proof of Theorem \ref{mainresult0}, we have that \begin{equation*}c=\sum\limits_{i\in\mathbb{Z}_m}(\lr{\hat{J}_{i-1},\hat{R}_i}-\lr{\hat{A}_i,\hat{J}_i}-\lr{\hat{B}_i,\hat{R}_i}+\lr{\hat{Q}_i,\hat{R}_i})+\text{Common~terms}~(\clubsuit)\end{equation*} and \begin{equation*}z=\sum\limits_{i\in\mathbb{Z}_m}(\lr{\hat{R}_{i-1},\hat{Q}_i}-\lr{\hat{R}_{i-1},\hat{B}_i}-\lr{\hat{Q}_{i-1},\hat{C}_i}+\lr{\hat{R}_{i-1},\hat{J}_{i-1}})+\text{Common~terms}~(\clubsuit),\end{equation*}
where $(\clubsuit)$ denotes the common terms of $c$ and $z$.

Using $\hat{X}_{i+k}=\hat{A}_{i+k}+\hat{B}_{i+k}-\hat{I}_{i+k}-\hat{I}_{i+k-1}$, $\hat{Y}_i=\hat{B}_i+\hat{C}_i-\hat{J}_i-\hat{J}_{i-1}$ and replacing the notations, we get that
\begin{flalign*}b=-\zsum\lr{\sum\limits_{k=0}^{m-1}(-1)^k(\hat{Q}_{i+k}+\hat{Q}_{i+k-1}),\hat{C}_i}+(\spadesuit)\end{flalign*}
and
\begin{flalign*}y=-\zsum\lr{\sum\limits_{k=0}^{m-1}(-1)^k\hat{A}_{i+k},\hat{J}_{i}+\hat{J}_{i-1}}+(\spadesuit),
\end{flalign*}
where \begin{flalign}\label{taohua}
(\spadesuit)=\zsum(\lr{\sum\limits_{k=0}^{m-1}(-1)^k\hat{A}_{i+k},\hat{B}_i}+\lr{\sum\limits_{k=0}^{m-1}(-1)^k\hat{A}_{i+k},\hat{C}_i}+\lr{\sum\limits_{k=0}^{m-1}(-1)^k\hat{B}_{i+k},\hat{C}_i}).
\end{flalign}
Since $m$ is odd, it is easy to see that $$\zsum\lr{\sum\limits_{k=0}^{m-1}(-1)^k(\hat{Q}_{i+k}+\hat{Q}_{i+k-1}),\hat{C}_i}=2\zsum\lr{\hat{Q}_{i-1},\hat{C}_i}$$
and \begin{flalign*}&\zsum\lr{\sum\limits_{k=0}^{m-1}(-1)^k\hat{A}_{i+k},\hat{J}_{i}+\hat{J}_{i-1}}\\
&=\zsum\lr{\sum\limits_{k=0}^{m-1}(-1)^k\hat{A}_{i+k},\hat{J}_{i}}+\zsum\lr{\sum\limits_{k=0}^{m-1}(-1)^k\hat{A}_{i+k},\hat{J}_{i-1}}\\
&=\zsum\lr{\sum\limits_{k=0}^{m-1}(-1)^k\hat{A}_{i+k},\hat{J}_{i}}+\zsum\lr{\sum\limits_{k=0}^{m-1}(-1)^k\hat{A}_{i+k+1},\hat{J}_{i}}\\
&=\zsum\lr{\sum\limits_{k=0}^{m-1}(-1)^k(\hat{A}_{i+k}+\hat{A}_{i+k+1}),\hat{J}_{i}}\\
&=2\zsum\lr{\hat{A}_i, \hat{J}_i}.\end{flalign*}
Thus, \begin{flalign}\label{by}
b=-2\zsum\lr{\hat{Q}_{i-1},\hat{C}_i}+(\spadesuit)~\text{and}~y=-2\zsum\lr{\hat{A}_i, \hat{J}_i}+(\spadesuit).
\end{flalign}
Hence, the exponent in (\ref{zuobian2}) is
$2c+b-2\sum\limits_{i\in\mathbb{Z}_m}\lr{\hat{R}_{i-1},\hat{K}_i}$ after we replace the notations $U_i,Z_i$ by $R_{i-1}, K_i$, respectively, which is equal to
\begin{equation}\label{jieshuzuobian2}
\begin{split}
\sum\limits_{i\in\mathbb{Z}_m}&(2\lr{\hat{J}_{i-1},\hat{R}_i}-2\lr{\hat{A}_i,\hat{J}_i}-2\lr{\hat{B}_i,\hat{R}_i}+2\lr{\hat{Q}_i,\hat{R}_i}-2\lr{\hat{Q}_{i-1},\hat{C}_i}\\
&-2\lr{\hat{R}_{i-1},\hat{B}_i-\hat{Q}_i-\hat{J}_{i-1}})+2(\clubsuit)+(\spadesuit)
\end{split}
\end{equation}
and the exponent in (\ref{youbian2}) is
$2z+y-2\sum\limits_{i\in\mathbb{Z}_m}\lr{\hat{K}_{i},\hat{R}_i}$, which is equal to
\begin{equation}\label{jieshuyoubian2}
\begin{split}
\sum\limits_{i\in\mathbb{Z}_m}&(2\lr{\hat{R}_{i-1},\hat{Q}_i}-2\lr{\hat{R}_{i-1},\hat{B}_i}-2\lr{\hat{Q}_{i-1},\hat{C}_i}+2\lr{\hat{R}_{i-1},\hat{J}_{i-1}}-2\lr{\hat{A}_i,\hat{J}_i}\\&-2\lr{\hat{B}_i-\hat{Q}_i-\hat{J}_{i-1},\hat{R}_i})
+2(\clubsuit)+(\spadesuit).
\end{split}
\end{equation}
It is easy to see that the formulas (\ref{jieshuzuobian2}) and (\ref{jieshuyoubian2}) are equal.
Hence, the exponents in (\ref{zuobian2}) and (\ref{youbian2}) are also the same.
Therefore, we complete the proof.
\end{proof}
\begin{remark}
In the proof Theorem \ref{jimain}, suppose that $m$ is even, then we can obtain that $y=b=0$ and Definition \ref{jidef} fails to provide an associative algebra.
We have not yet known whether one can get a definition of even periodic derived Hall algebra by amending the exponent of $v$ in Definition \ref{jidef}.
\end{remark}
\begin{corollary}
Let $m$ be an odd positive integer. For any $A_i,B_i,C_i,M_i\in\A$ with $i\in\mathbb{Z}_m$, we have that
\begin{flalign*}
&\sum\limits_{[I_i],[X_i],[J_i]\in{\rm Iso}(\A),i\in\mathbb{Z}_m}q^{-\sum\limits_{i\in\mathbb{Z}_m}\lr{\hat{I}_{i-1},\hat{C}_i}}\zprod(\frac{H_{I_i[1]\oplus A_i,B_i\oplus I_{i-1}[-1]}^{X_i}}{a_{I_i}}
\frac{H_{J_i[1]\oplus X_i,C_i\oplus J_{i-1}[-1]}^{M_i}}{a_{J_i}})\\
&=\sum\limits_{[I_i],[Y_i],[J_i]\in{\rm Iso}(\A),i\in\mathbb{Z}_m}q^{-\zsum\lr{\hat{A}_i, \hat{J}_i}}\zprod(\frac{H_{I_i[1]\oplus A_i,Y_i\oplus I_{i-1}[-1]}^{M_i}}{a_{I_i}}
\frac{H_{J_i[1]\oplus B_i,C_i\oplus J_{i-1}[-1]}^{Y_i}}{a_{J_i}}).
\end{flalign*}
\end{corollary}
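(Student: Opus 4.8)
The plan is to obtain the asserted identity by reading off the coefficients of a fixed basis element $u_{\osum M_i[i]}$ on the two sides of the associativity relation (\ref{rjhl2}), which holds by Theorem \ref{jimain}. Indeed, the intermediate expansions (\ref{tuizuobian}) and (\ref{tuiyoubian}) computed in the proof of Theorem \ref{jimain} express $\mbox{LHS of }(\ref{rjhl2})$ and $\mbox{RHS of }(\ref{rjhl2})$ as sums over $[I_i],[X_i],[J_i],[M_i]$ (resp. $[I_i],[Y_i],[J_i],[M_i]$) of scalars $v^{b}$ (resp. $v^{y}$) times exactly the $H$-products appearing in the corollary, times $u_{\osum M_i[i]}$. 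Since the elements $u_{\osum M_i[i]}$ are part of a basis of $\mathcal{DH}_m(\A)$, Theorem \ref{jimain} forces, for each fixed tuple $([M_i])_{i\in\mathbb{Z}_m}$, the equality of the coefficients of $u_{\osum M_i[i]}$. The whole task then reduces to simplifying the scalars $v^{b}$ and $v^{y}$ and extracting the claimed $q$-powers.

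Next I would isolate how the exponents depend on the summation variables. On the support of the left-hand sum the defining triangle gives $\hat{X}_i=\hat{A}_i+\hat{B}_i-\hat{I}_i-\hat{I}_{i-1}$, while on the support of the right-hand sum it gives $\hat{Y}_i=\hat{B}_i+\hat{C}_i-\hat{J}_i-\hat{J}_{i-1}$. Substituting these relations into the definitions of $b$ and $y$, and invoking the alternating-sum telescoping already carried out in the proof of Theorem \ref{jimain} — this is precisely where the oddness of $m$ is used, yielding $\zsum\lr{\sum_{k=0}^{m-1}(-1)^k(\hat{I}_{i+k}+\hat{I}_{i+k-1}),\hat{C}_i}=2\zsum\lr{\hat{I}_{i-1},\hat{C}_i}$ and $\zsum\lr{\sum_{k=0}^{m-1}(-1)^k\hat{A}_{i+k},\hat{J}_i+\hat{J}_{i-1}}=2\zsum\lr{\hat{A}_i,\hat{J}_i}$ — one arrives exactly at the expressions (\ref{by}),
\begin{equation*}
b=-2\zsum\lr{\hat{I}_{i-1},\hat{C}_i}+(\spadesuit),\qquad y=-2\zsum\lr{\hat{A}_i,\hat{J}_i}+(\spadesuit),
\end{equation*}
where the common term $(\spadesuit)$ is the expression (\ref{taohua}), depending only on the fixed objects $A_i,B_i,C_i$.

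Since $v^2=q$, these read $v^{b}=v^{(\spadesuit)}q^{-\zsum\lr{\hat{I}_{i-1},\hat{C}_i}}$ and $v^{y}=v^{(\spadesuit)}q^{-\zsum\lr{\hat{A}_i,\hat{J}_i}}$. The scalar $v^{(\spadesuit)}$ is independent of all the summation indices $[I_i],[X_i],[J_i],[Y_i]$, so it factors out of both sums and cancels from the coefficient identity; the surviving relation is precisely the assertion. I expect no genuine obstacle, since the entire analytic content has already been produced inside the proof of Theorem \ref{jimain}; the only points requiring care are the verification that $\hat{X}_i$ (resp.\ $\hat{Y}_i$) is actually determined on the support, so that $v^{b}$ (resp.\ $v^{y}$) factors as a constant times the displayed $q$-power, and the bookkeeping confirmation that the two occurrences of $(\spadesuit)$ literally coincide before cancellation.
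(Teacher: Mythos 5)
Your proposal is correct and follows essentially the same route as the paper: the paper likewise reads off the coefficients of $u_{\osum M_i[i]}$ in the expansions (\ref{tuizuobian}) and (\ref{tuiyoubian}), invokes the exponent computations (\ref{by}) with the common term $(\spadesuit)$ from (\ref{taohua}), and concludes by the associativity established in Theorem \ref{jimain}. No substantive differences.
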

\begin{proof}
By (\ref{by}), we have the exponents
\begin{flalign}
b=-2\zsum\lr{\hat{I}_{i-1},\hat{C}_i}+(\spadesuit)~\text{and}~y=-2\zsum\lr{\hat{A}_i, \hat{J}_i}+(\spadesuit)
\end{flalign}
in (\ref{tuizuobian}) and (\ref{tuiyoubian}), respectively, where $(\spadesuit)$ is the same as $(\ref{taohua})$.
Hence, by Theorem \ref{jimain}, we complete the proof.
\end{proof}

In what follows, we illustrate the relation between the $m$-periodic derived Hall algebra $\mathcal {D}\mathcal {H}_m(\A)$ in Definition \ref{jidef} and the Hall algebra for $D_m(\A)$ defined by Xu-Chen \cite{XuChen}. To this end, first of all, we give the following

\begin{proposition}\label{guanjmt}
For any objects $A_i, B_i, M_i\in\A$ with $i\in\mathbb{Z}_m$, we have that
\begin{equation}\label{guanjgs}
\begin{split}
&|\Hom_{D_m(\A)}(\bigoplus\limits_{i\in\mathbb{Z}_m}A_i[i],\bigoplus\limits_{i\in\mathbb{Z}_m}B_i[i+1])_{\bigoplus\limits_{i\in\mathbb{Z}_m}M_i[i+1]}|=\\&
\sum\limits_{[I_0],[I_1],\cdots,[I_{m-1}]\in{\rm Iso}(\A)}\zprod\frac{|\Hom_{D^b(\A)}(I_i[1]\oplus A_i,B_i[1]\oplus I_{i-1})_{M_i[1]}|}{a_{I_i}}.
\end{split}
\end{equation}
\end{proposition}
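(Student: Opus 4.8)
The plan is to count the morphisms $f\in\Hom_{D_m(\A)}(\bigoplus_{i}A_i[i],\bigoplus_{i}B_i[i+1])$ with $\cone(f)\cong\bigoplus_{i}M_i[i+1]$ by decomposing $f$ into its components and matching each component against one of the triangles counted on the right-hand side. First I would use the orbit-category formula (\ref{guidaodef}) together with the heredity of $\A$ (so that only $\Hom_\A$ and $\Ext^1_\A$ survive) to identify
\[\Hom_{D_m(\A)}\Big(\bigoplus_{i\in\mathbb{Z}_m}A_i[i],\bigoplus_{i\in\mathbb{Z}_m}B_i[i+1]\Big)=\bigoplus_{i\in\mathbb{Z}_m}\big(\Ext^1_\A(A_i,B_i)\oplus\Hom_\A(A_i,B_{i-1})\big),\]
so that $f$ is precisely a tuple $(e_i,h_i)_{i}$ with $e_i\in\Ext^1_\A(A_i,B_i)$ and $h_i\in\Hom_\A(A_i,B_{i-1})$. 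Rotating the triangle attached to $f$ into $\bigoplus_iB_i[i]\to\bigoplus_iM_i[i]\to\bigoplus_iA_i[i]\xrightarrow{f}\bigoplus_iB_i[i+1]$ and passing to $\mathbb{Z}_m$-graded homology produces exactly the cyclic exact sequence (\ref{cyclicexact}), whose connecting maps $A_i\to B_{i-1}$ are the components $h_i$. Setting $I_i=\im(h_{i+1}\colon A_{i+1}\to B_i)$ then yields the five-term sequences (\ref{fiveterm}).

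Next I would set up the correspondence between $f$ and tuples of morphisms in $D^b(\A)$. By \cite[Lemma 3.1]{Zhang}, each five-term sequence (\ref{fiveterm}) is equivalent to a triangle $B_i\oplus I_{i-1}[-1]\to M_i\to I_i[1]\oplus A_i$ in $D^b(\A)$, i.e. to a morphism $g_i\colon I_i[1]\oplus A_i\to B_i[1]\oplus I_{i-1}$ with $\cone(g_i)\cong M_i[1]$; its components are the inclusion $\iota_i\colon I_i\hookrightarrow B_i$, the surjection $\pi_i\colon A_i\twoheadrightarrow I_{i-1}$, and the class $e_i\in\Ext^1_\A(A_i,B_i)$ (the term $\Hom_{D^b(\A)}(I_i[1],I_{i-1})$ vanishing by heredity). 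Conversely, any $g_i$ with cone $M_i[1]$ has, by the homology long exact sequence, $\iota_i$ monic and $\pi_i$ epic, so $f$ can be reassembled from a tuple $(g_i)_i$ via $h_i=\iota_{i-1}\circ\pi_i$ together with the $e_i$. Thus, after fixing one abstract object $I_i$ in each isomorphism class, a tuple $(g_i)_i$ with $g_i\in\Hom_{D^b(\A)}(I_i[1]\oplus A_i,B_i[1]\oplus I_{i-1})_{M_i[1]}$ determines an $f$ with $\im(h_{i+1})\cong I_i$, and such tuples are counted by $\prod_i|\Hom_{D^b(\A)}(I_i[1]\oplus A_i,B_i[1]\oplus I_{i-1})_{M_i[1]}|$.

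It then remains to compare cardinalities. For fixed isomorphism types $[I_i]$, I would show the map $(g_i)_i\mapsto f$ has all fibers of size $\prod_i a_{I_i}$: a preimage of $f$ amounts to a choice, for each $i$, of an isomorphism $I_i\xrightarrow{\sim}\im(h_{i+1})\subseteq B_i$ (used to turn the honest image into the abstract summand $I_i$ occurring in both $g_i$ and $g_{i+1}$), and the set of such isomorphisms is a torsor under $\Aut_\A(I_i)$. Equivalently, the group $\prod_i\Aut_\A(I_i)$ acts freely on the tuples $(g_i)_i$ lying over a given $f$, freeness holding because each $\iota_i$ is monic, with orbits exactly the fibers. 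Summing over all $[I_i]$ and dividing by $\prod_i a_{I_i}$ then reproduces precisely the right-hand side of (\ref{guanjgs}).

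The step I expect to be the main obstacle is the well-definedness of the reassembly in the second paragraph: I must verify that the $f$ built from a tuple $(g_i)_i$ genuinely has cone $\bigoplus_iM_i[i+1]$ in $D_m(\A)$, i.e. that the component triangles in $D^b(\A)$ glue to the global triangle in the orbit category. I would handle this homologically rather than by manipulating the (Keller) triangulated structure directly: the connecting maps $h_i=\iota_{i-1}\pi_i$ splice the five-term sequences (\ref{fiveterm}) back into the single cyclic exact sequence (\ref{cyclicexact}), which is the $\mathbb{Z}_m$-graded homology long exact sequence of the triangle defined by $f$; hence $\cone(f)$ has homology $M_i$ in the slot matching the summand $M_i[i+1]$. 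Since $\A$ is hereditary, every object of $D_m(\A)$ splits as the direct sum of its shifted homologies, so this computation forces $\cone(f)\cong\bigoplus_iM_i[i+1]$ and closes the argument.
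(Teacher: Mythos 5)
Your proposal is correct and follows essentially the same route as the paper: both arguments reduce the count to the correspondence, obtained by taking homology of the relevant triangle in $D_m(\mathcal{A})$ and invoking \cite[Lemma 3.1]{Zhang}, between morphisms with cone $\bigoplus_i M_i[i+1]$ and tuples of five-term exact sequences $0\to I_i\to B_i\to M_i\to A_i\to I_{i-1}\to 0$, with the factors $a_{I_i}$ accounting for the $\Aut_{\mathcal{A}}(I_i)$-ambiguity in identifying the abstract $I_i$ with the image of the connecting map. The only difference is bookkeeping: the paper packages the count as a bijection of orbit sets via \cite[Proposition 2.6]{XiaoXu}, whereas you count fibers of the reassembly map $(g_i)_i\mapsto f$ as torsors under $\prod_i\Aut_{\mathcal{A}}(I_i)$ — the same mechanism in different clothing.
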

\begin{proof}
First of all, by \cite[Proposition 2.6]{XiaoXu}, we know that for any objects $X,Y,Z$ in a triangulated category $\mathcal {T}$, there exists a bijection
$$\frac{W_{\mathcal {T}}^{Y,Z,X}}{\Aut_\mathcal {T} (Y)}\longrightarrow \Hom_{\mathcal {T}}(Z,X)_{Y[1]},$$
where $W_{\mathcal {T}}^{Y,Z,X}:=\{(f,g,h)~|~Y\xrightarrow{f}Z\xrightarrow{g}X\xrightarrow{h}Y[1]~~\mbox{is~a~triangle~in}~~\mathcal {T}\}$ and the group action of $\Aut_\mathcal {T} (Y)$ on $W_{\mathcal {T}}^{Y,Z,X}$ is defined via the following commutative diagram
$$\xymatrix{Y\ar[r]^-{f}\ar[d]_-{\xi}&Z\ar[r]^-{g}\ar@{=}[d]&X\ar[r]^-{h}\ar@{=}[d]&Y[1]\ar[d]^-{\xi[1]}\\
Y\ar[r]^-{\bar{f}}&Z\ar[r]^-{g}&X\ar[r]^-{\bar{h}}&Y[1]}$$
for any $\xi\in\Aut_\mathcal {T} (Y)$, i.e., $\xi.(f,g,h)=(f\circ\xi^{-1},g,\xi[1]\circ h)$.

Using \cite[Lemma 3.1]{Zhang}, we can reduce the proof of the equation (\ref{guanjgs}) to prove that there exists a bijection between the sets
\begin{equation}\label{guidaojihe}
\frac{W_{D_m(\A)}^{\osum M_i[i],\osum A_i[i],\osum B_i[i+1]}}{\Aut_{D_m(\A)}(\osum M_i[i])}\end{equation}
and
\begin{equation}\label{bingjihe}
\bigcup\limits_{[I_0],[I_1],\cdots,[I_{m-1}]\in{\rm Iso}(\A)}\frac{\E_{I_0,I_{m-1}}^{B_0,M_0,A_0}\times\E_{I_{m-1},I_{m-2}}^{B_{m-1},M_{m-1},A_{m-1}}\times\cdots\times\E_{I_1,I_0}^{B_1,M_1,A_1}}{\Aut_{\A}(M_0,M_1,\cdots,M_{m-1} ,I_0,I_1,\cdots,I_{m-1})},\end{equation}
where for any objects $X_i\in\A$, $\E_{X_0,X_4}^{X_1,X_2,X_3}$ denotes the set $$\{(f_0,f_1,f_2,f_3)~|~0\xrightarrow{}X_0\xrightarrow{f_0}X_1\xrightarrow{f_1}X_2\xrightarrow{f_2}X_3\xrightarrow{f_3}X_4\xrightarrow{}0~~\mbox{is~an~exact~sequence~in}~~ \A\}$$ and $\Aut_{\A}(X_0,X_1,\cdots,X_n):=\Aut_{\A}(X_0)\times\Aut_{\A}(X_1)\times\cdots\times\Aut_{\A}(X_n)$, and the group action in (\ref{bingjihe}) is defined by the following commutative diagrams
$$\xymatrix{0\ar[r]&I_i\ar[r]\ar[d]_-{\xi_i}&B_i\ar[r]\ar@{=}[d]&M_i\ar[r]\ar[d]^-{\eta_i}&A_i\ar[r]\ar@{=}[d]&I_{i-1}\ar[r]\ar[d]^-{\xi_{i-1}}&0\\
0\ar[r]&I_i\ar[r]&B_i\ar[r]&M_i\ar[r]&A_i\ar[r]&I_{i-1}\ar[r]&0}$$
for any $\xi_i\in\Aut_{\A}(I_i), \eta_i\in\Aut_{\A}(M_i)$ and $i\in\mathbb{Z}_m$.

In fact, for any triangle in $D_m(\A)$ $$\osum \widetilde{M}_i[i]\longrightarrow \osum \widetilde{A}_i[i]\longrightarrow \osum \widetilde{B}_i[i+1]\longrightarrow \osum \widetilde{M}_i[i+1],$$
we have the exact sequences (\ref{cyclicexact}) and (\ref{fiveterm}).
Thus, we get a correspondence $\varphi$ from the set (\ref{guidaojihe}) to the set (\ref{bingjihe}). It is easy to see that $\varphi$ is well-defined and bijective.
Therefore, we complete the proof.
\end{proof}

\begin{lemma}\label{zhouqihom}
For any objects $A_i, B_i\in\A$ with $i\in\mathbb{Z}_m$, we have that
\begin{equation}
|\Hom_{D_m(\A)}(\osum A_i[i],\osum B_i[i])|=\zprod(|\Hom_{\A}(A_i, B_{i})|\cdot|\Ext_{\A}^1(A_i,B_{i+1})|).
\end{equation}
\end{lemma}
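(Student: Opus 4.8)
The plan is to compute the left-hand side directly from the definition of the orbit category $D_m(\A)=D^b(\A)/[m]$. Taking $G=[m]$ in (\ref{guidaodef}) and choosing the evident representatives in $D^b(\A)$, I would first write
$$\Hom_{D_m(\A)}(\osum A_i[i],\osum B_i[i])=\bigoplus_{n\in\mathbb{Z}}\bigoplus_{i,j\in\mathbb{Z}_m}\Hom_{D^b(\A)}(A_i[i],B_j[j+mn]),$$
and then rewrite each elementary piece by shifting as $\Hom_{D^b(\A)}(A_i[i],B_j[j+mn])\cong\Hom_{D^b(\A)}(A_i,B_j[j-i+mn])$. This reduces the whole computation to understanding, for $M,N\in\A$, the graded pieces $\Hom_{D^b(\A)}(M,N[k])$.

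Next I would invoke the heredity of $\A$: for $M,N\in\A$ the space $\Hom_{D^b(\A)}(M,N[k])$ vanishes unless $k\in\{0,1\}$, in which case it equals $\Hom_{\A}(M,N)$ (for $k=0$) or $\Ext^1_{\A}(M,N)$ (for $k=1$). Hence the only surviving summands are those with $j-i+mn\in\{0,1\}$, and the bookkeeping splits into two congruence problems subject to $0\le i,j\le m-1$. The equation $j-i+mn=0$ forces $n=0$ and $j=i$, contributing $\Hom_{\A}(A_i,B_i)$ for each $i$. The equation $j-i+mn=1$ forces $j\equiv i+1\pmod m$ with a unique admissible $n$ (namely $n=0$ when $i\le m-2$ and $j=i+1$, and $n=1$ when $i=m-1$ and $j=0$), contributing $\Ext^1_{\A}(A_i,B_{i+1})$ with the index $i+1$ read cyclically in $\mathbb{Z}_m$.

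Assembling these contributions gives a natural identification
$$\Hom_{D_m(\A)}(\osum A_i[i],\osum B_i[i])\cong\osum\Hom_{\A}(A_i,B_i)\oplus\osum\Ext^1_{\A}(A_i,B_{i+1}),$$
and since $\A$ is finitary every factor is finite, so passing to cardinalities turns the direct sum into the product $\zprod(|\Hom_{\A}(A_i,B_i)|\cdot|\Ext^1_{\A}(A_i,B_{i+1})|)$, which is exactly the claim. I expect the only delicate point to be the cyclic wraparound in the $\Ext^1$ term, i.e. checking that the $i=m-1$ summand produced by $n=1$ is precisely $\Ext^1_{\A}(A_{m-1},B_0)$, so that the $\Ext^1$ pieces assemble into a single product indexed by $\mathbb{Z}_m$ with $B_{i+1}$ taken modulo $m$; this is routine once the range $-(m-1)\le j-i\le m-1$ is used to pin down $n$ uniquely, and it also makes the degenerate case $m=1$ (where $B_1=B_0$ and both $k=0,1$ occur with $n=0$) consistent with the formula.
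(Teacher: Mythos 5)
Your proposal is correct and is exactly the argument the paper intends: its proof of this lemma is the single sentence that it ``can be easily proved by using (\ref{guidaodef}) and the heredity of $\A$,'' and your computation simply carries out that plan in full, including the correct handling of the cyclic wraparound term $\Ext^1_{\A}(A_{m-1},B_0)$ and the degenerate case $m=1$.
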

\begin{proof}
This can be easily proved by using (\ref{guidaodef}) and the heredity of $\A$.
\end{proof}

Now, let us recall the Hall algebra for $D_m(\A)$ defined by Xu-Chen \cite{XuChen}.
Given objects $M,N,X\in D_m(\A)$, set $$[M,N]:=\prod\limits_{i=1}^m|\Hom_{D_m(\A)}(M[i],N)|^{(-1)^i}.$$ According to \cite{XuChen}, for any objects $X,Y,L\in D_m(\A)$, we have that
$$\frac{|\Hom_{D_m(\A)}(L,X)_{Y[1]}|}{|{\rm Aut}_{D_m(\A)}(X)|}\cdot\sqrt{\frac{[L,X]}{[X,X]}}=
\frac{|\Hom_{D_m(\A)}(Y,L)_{X}|}{|{\rm Aut}_{D_m(\A)}(Y)|}\cdot\sqrt{\frac{[Y,L]}{[Y,Y]}}=:\mathscr{F}_{X,Y}^L.$$
\begin{definition}
The \emph{derived Hall algebra} $\mathscr{DH}_m(\A)$ of $D_m(\A)$ is the $\mathbb{C}$-vector space with the basis $\{\mu_{[X]}~|~[X]\in {\rm Iso}(D_m(\A))\}$, and with the multiplication defined by
\begin{equation}
\mu_{[X]} \mu_{[Y]}=\sum\limits_{[L]\in {\rm Iso}(D_m(\A))}\mathscr{F}_{X,Y}^L\mu_{[L]}.\end{equation}
\end{definition}
By \cite{XuChen}, the derived Hall algebra $\mathscr{DH}_m(\A)$ is an associative and unital algebra.
Similar to (\ref{drpgs1}), the odd periodic derived Hall number $\mathscr{F}_{X,Y}^L$ also has the derived Riedtmann--Peng formula (cf. \cite{XiaoXu2,Sheng})
$$\mathscr{F}_{X,Y}^L=\frac{|\Ext^1_{D_m(\A)}(X,Y)_{L}|}{|\Hom_{D_m(\A)}(X,Y)|}\cdot\frac{1}{\sqrt{[X,Y]}}\cdot\frac{|{\rm Aut}_{D_m(\A)}(L)|}{|{\rm Aut}_{D_m(\A)}(X)||{\rm Aut}_{D_m(\A)}(Y)|}\cdot\sqrt{\frac{[L,L]}{[X,X][Y,Y]}},$$
where $\Ext^1_{D_m(\A)}(X,Y)_{L}:=\Hom_{D_m(\A)}(X,Y[1])_{L[1]}$.

Define the \emph{dual derived Hall algebra} $\mathscr{DH}'_m(\A)$ of $D_m(\A)$ to be the $\mathbb{C}$-vector space with the basis $\{u_{[X]}~|~[X]\in {\rm Iso}(D_m(\A))\}$, and with the multiplication defined by
$$u_{[X]} u_{[Y]}=\sum\limits_{[L]\in {\rm Iso}(D_m(\A))}\mathscr{H}_{X,Y}^Lu_{[L]},$$ where
$$\mathscr{H}_{X,Y}^L=\frac{|\Ext^1_{D_m(\A)}(X,Y)_{L}|}{|\Hom_{D_m(\A)}(X,Y)|}\cdot\frac{1}{\sqrt{[X,Y]}}.$$

Clearly, the map $\rho:\mathscr{DH}'_m(\A)\rightarrow \mathscr{DH}_m(\A), u_{[X]}\mapsto\sqrt{[X,X]}\cdot |{\rm Aut}_{D_m(\A)}(X)|\cdot\mu_{[X]}$ gives an isomorphism of algebras.

\begin{theorem}\label{liangjitg}
The $m$-periodic derived Hall algebra $\mathcal {D}\mathcal {H}_m(\A)$ is the same as the dual derived Hall algebra $\mathscr{DH}'_m(\A)$, and thus it is isomorphic to the derived Hall algebra $\mathscr{DH}_m(\A)$.
\end{theorem}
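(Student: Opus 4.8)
The plan is to prove the stronger assertion that the two algebras coincide literally: they have the same underlying vector space and the same structure constants. The isomorphism with $\mathscr{DH}_m(\A)$ then follows at once by composing with the algebra isomorphism $\rho$ recorded just above the statement. Since $\A$ is hereditary, every object of $D^b(\A)$ is a direct sum of stalk complexes, so every object of $D_m(\A)$ is isomorphic to one of the form $\osum M_i[i]$ with $M_i\in\A$, and $\Iso(D_m(\A))$ is in bijection with the tuples $([M_i])_{i\in\mathbb{Z}_m}$. Hence the basis $\{u_{\osum M_i[i]}\}$ used in Definition \ref{jidef} is exactly the basis $\{u_{[X]}\}$ of $\mathscr{DH}'_m(\A)$, and it suffices to show, for $X=\osum A_i[i]$, $Y=\osum B_i[i]$ and $L=\osum M_i[i]$, that $\mathscr{H}_{X,Y}^L$ equals the coefficient of $u_{\osum M_i[i]}$ in Definition \ref{jidef}.

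Next I would unwind $\mathscr{H}_{X,Y}^L$. By definition $\Ext^1_{D_m(\A)}(X,Y)_L=\Hom_{D_m(\A)}(X,Y[1])_{L[1]}$, and since $Y[1]=\osum B_i[i+1]$, $L[1]=\osum M_i[i+1]$, Proposition \ref{guanjmt} rewrites its cardinality as $\sum_{[I_i]}\zprod a_{I_i}^{-1}|\Hom_{D^b(\A)}(I_i[1]\oplus A_i,B_i[1]\oplus I_{i-1})_{M_i[1]}|$. By the very definition of the dual derived Hall number, each factor equals $H_{I_i[1]\oplus A_i,B_i\oplus I_{i-1}[-1]}^{M_i}\cdot|\Hom_{D^b(\A)}(I_i[1]\oplus A_i,B_i\oplus I_{i-1}[-1])|\cdot\{I_i[1]\oplus A_i,B_i\oplus I_{i-1}[-1]\}$. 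The crucial simplification, forced by heredity, is that the last two factors are independent of $I_i$: all mixed $\Hom$-groups involving a nonzero shift vanish, so $|\Hom_{D^b(\A)}(I_i[1]\oplus A_i,B_i\oplus I_{i-1}[-1])|=|\Hom_{\A}(A_i,B_i)|$ and $\{I_i[1]\oplus A_i,B_i\oplus I_{i-1}[-1]\}=1$. Pulling these constants out of the summation already reproduces, up to a scalar, the sum $\sum_{[I_i]}\zprod a_{I_i}^{-1}H_{I_i[1]\oplus A_i,B_i\oplus I_{i-1}[-1]}^{M_i}$ appearing in Definition \ref{jidef}.

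It then remains to match the scalar prefactor $\frac{\zprod|\Hom_{\A}(A_i,B_i)|}{|\Hom_{D_m(\A)}(X,Y)|\sqrt{[X,Y]}}$ against $v^{\zsum\lr{\sum_{k=0}^{m-1}(-1)^k\hat{A}_{i+k},\hat{B}_i}}$. Here I would invoke Lemma \ref{zhouqihom}, giving $|\Hom_{D_m(\A)}(X,Y)|=\zprod|\Hom_{\A}(A_i,B_i)|\cdot|\Ext^1_{\A}(A_i,B_{i+1})|$, which collapses the first quotient to $\zprod|\Ext^1_{\A}(A_i,B_{i+1})|^{-1}$; and I would compute $[X,Y]=\prod_{j=1}^m|\Hom_{D_m(\A)}(X[j],Y)|^{(-1)^j}$ by applying the same lemma to each shift, since $|\Hom_{D_m(\A)}(X[j],Y)|=\zprod|\Hom_{\A}(A_i,B_{i+j})|\cdot|\Ext^1_{\A}(A_i,B_{i+j+1})|$. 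Taking $\log_q$ and grouping every contribution according to the cyclic ``gap'' $b-a$ of the two indices, both the target exponent and the exponent arising from $[X,Y]$ become alternating sums over the residues of $\mathbb{Z}_m$.

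The main obstacle is precisely this final exponent bookkeeping, and—exactly as in the proof of Theorem \ref{jimain}—it is where the oddness of $m$ is indispensable: identifying the two alternating sums rests on $(-1)^m=-1$, equivalently $(-1)^{m-r}=-(-1)^r$, which is what forces the ``wrap-around'' contributions at gaps $0$ and $1$ to line up with the target exponent (I have checked the base case $m=1$, where the prefactor comes out to $v^{\lr{\hat{A}_0,\hat{B}_0}}$ as required). Once this scalar identity is verified, the structure constants of $\mathcal{DH}_m(\A)$ and $\mathscr{DH}'_m(\A)$ agree, so the two algebras are equal, and $\rho$ furnishes the asserted isomorphism $\mathcal{DH}_m(\A)\cong\mathscr{DH}_m(\A)$.
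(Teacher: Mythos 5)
Your proposal is correct and follows essentially the same route as the paper's proof: unwinding $\mathscr{H}_{X,Y}^L$ via Proposition \ref{guanjmt} for the numerator, using heredity to identify each factor with $H_{I_i[1]\oplus A_i,B_i\oplus I_{i-1}[-1]}^{M_i}\cdot|\Hom_{\A}(A_i,B_i)|$, computing the remaining scalar with Lemma \ref{zhouqihom}, and collapsing the alternating exponent sum using that $m$ is odd. The only difference is that the paper carries out the final exponent bookkeeping explicitly, whereas you leave it as a (correctly diagnosed) computation.
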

\begin{proof}
For any objects $\osum A_i[i],\osum B_i[i],\osum M_i[i]\in D_m(\A)$,
\begin{equation}
[\osum A_i[i],\osum B_i[i]]=\prod\limits_{k=1}^m|\Hom_{D_m(\A)}(\osum A_i[i+k],\osum B_i[i])|^{(-1)^k}.
\end{equation}
Since \begin{equation*}\begin{split}&\osum A_i[i+k]=\\&A_0[k]\oplus A_1[k+1]\oplus\cdots\oplus A_{m-k-1}[m-1]\oplus A_{m-k}\oplus A_{m-k+1}[1]\oplus\cdots\oplus A_{m-1}[k-1]\\&
\cong A_{m-k}\oplus A_{m-k+1}[1]\oplus\cdots\oplus A_{m-1}[k-1]\oplus A_0[k]\oplus A_1[k+1]\oplus\cdots\oplus A_{m-k-1}[m-1],\end{split}\end{equation*}
by Lemma \ref{zhouqihom}, we obtain that
$$|\Hom_{D_m(\A)}(\osum A_i[i+k],\osum B_i[i])|=\zprod(|\Hom_{\A}(A_{i+m-k}, B_{i})|\cdot|\Ext_{\A}^1(A_{i+m-k},B_{i+1})|).$$
By definition, $$\mathscr{H}_{\osum A_i[i],\osum B_i[i]}^{\osum M_i[i]}=\frac{|\Ext^1_{D_m(\A)}(\osum A_i[i],\osum B_i[i])_{\osum M_i[i]}|}{|\Hom_{D_m(\A)}(\osum A_i[i],\osum B_i[i])|}\cdot\frac{1}{\sqrt{[\osum A_i[i],\osum B_i[i]]}}.$$
Noting that \begin{flalign*}
|\Hom_{D_m(\A)}(\osum A_i[i],\osum B_i[i])|&=\zprod(|\Hom_{\A}(A_i, B_{i})|\cdot|\Ext_{\A}^1(A_i,B_{i+1})|)\\
&=q^{\zsum{\rm dim}_k{\rm Ext}_{\A}^1(A_i,B_{i+1})}\cdot\zprod|\Hom_{\A}(A_i,B_i)|
\end{flalign*}
and
\begin{flalign*}
\sqrt{[\osum A_i[i],\osum B_i[i]]}=v^{\sum\limits_{k=1}^m(-1)^k(\zsum{\rm dim}_k{\rm Hom}_{\A}(A_{i+m-k}, B_{i})+\zsum{\rm dim}_k{\rm Ext}_{\A}^1(A_{i+m-k},B_{i+1}))},
\end{flalign*}
we get that
\begin{flalign*}
\mathscr{H}_{\osum A_i[i],\osum B_i[i]}^{\osum M_i[i]}=v^{-t_0}\cdot\frac{|\Hom_{D_m(\A)}(\bigoplus\limits_{i\in\mathbb{Z}_m}A_i[i],\bigoplus\limits_{i\in\mathbb{Z}_m}B_i[i+1])_{\bigoplus\limits_{i\in\mathbb{Z}_m}M_i[i+1]}|}
{\zprod|\Hom_{\A}(A_i,B_i)|},
\end{flalign*}
where \begin{flalign*}t_0&=\sum\limits_{k=1}^m(-1)^k(\zsum{\rm dim}_k{\rm Hom}_{\A}(A_{i+m-k}, B_{i})+\zsum{\rm dim}_k{\rm Ext}_{\A}^1(A_{i+m-k},B_{i+1}))\\&
\quad\quad\quad\quad\quad\quad\quad\quad\quad\quad+2\zsum{\rm dim}_k{\rm Ext}_{\A}^1(A_i,B_{i+1}).\end{flalign*}

Now let us calculate the exponent $-t_0$, it is equal to
\begin{flalign*}&\sum\limits_{j=0}^{m-1}(-1)^{j}(\zsum{\rm dim}_k{\rm Hom}_{\A}(A_{i+j}, B_{i})+\zsum{\rm dim}_k{\rm Ext}_{\A}^1(A_{i+j},B_{i+1}))
-2\zsum{\rm dim}_k{\rm Ext}_{\A}^1(A_i,B_{i+1})\\
&=\zsum{\rm dim}_k{\rm Hom}_{\A}(A_{i}, B_{i})+\zsum{\rm dim}_k{\rm Ext}_{\A}^1(A_{i},B_{i+1})-2\zsum{\rm dim}_k{\rm Ext}_{\A}^1(A_{i},B_{i+1})\\
&\quad-\zsum{\rm dim}_k{\rm Hom}_{\A}(A_{i+1}, B_{i})-\zsum{\rm dim}_k{\rm Ext}_{\A}^1(A_{i+1},B_{i+1})\\
&\quad+\zsum{\rm dim}_k{\rm Hom}_{\A}(A_{i+2}, B_{i})+\zsum{\rm dim}_k{\rm Ext}_{\A}^1(A_{i+2},B_{i+1})\\
&\quad-\cdots\\
&\quad-\zsum{\rm dim}_k{\rm Hom}_{\A}(A_{i+m-2}, B_{i})-\zsum{\rm dim}_k{\rm Ext}_{\A}^1(A_{i+m-2},B_{i+1})\\
&\quad+\zsum{\rm dim}_k{\rm Hom}_{\A}(A_{i+m-1}, B_{i})+\zsum{\rm dim}_k{\rm Ext}_{\A}^1(A_{i+m-1},B_{i+1})\\
&=\zsum(\lr{\hat{A}_i,\hat{B}_i}-\lr{\hat{A}_{i+1},\hat{B}_i}+\cdots-\lr{\hat{A}_{i+m-2},\hat{B}_i}+\lr{\hat{A}_{i+m-1},\hat{B}_i})\\
&=\zsum\lr{\sum\limits_{k=0}^{m-1}(-1)^k\hat{A}_{i+k},\hat{B}_i}.
\end{flalign*}
Using Proposition \ref{guanjmt}, we obtain that
\begin{flalign*}
&\mathscr{H}_{\osum A_i[i],\osum B_i[i]}^{\osum M_i[i]}=\\&v^{\zsum\lr{\sum\limits_{k=0}^{m-1}(-1)^k\hat{A}_{i+k},\hat{B}_i}}\sum\limits_{[I_0],[I_1],\cdots,[I_{m-1}]}\zprod\frac{|\Hom_{D^b(\A)}(I_i[1]\oplus A_i,B_i[1]\oplus I_{i-1})_{M_i[1]}|}{a_{I_i}|{\rm Hom}_{\A}(A_i,B_i)|}.
\end{flalign*}
By definition,
\begin{flalign*}H_{I_i[1]\oplus A_i,B_i\oplus I_{i-1}[-1]}^{M_i}
&=\frac{|{\rm Hom}_{D^b(\A)}(I_i[1]\oplus A_i,B_i[1]\oplus I_{i-1})_{M_i[1]}|}{|{\rm Hom}_{D^b(\A)}(I_i[1]\oplus A_i,B_i\oplus I_{i-1}[-1])|}\cdot\frac{1}{\{I_i[1]\oplus A_i,B_i\oplus I_{i-1}[-1]\}}\\
&=\frac{|{\rm Hom}_{D^b(\A)}(I_i[1]\oplus A_i,B_i[1]\oplus I_{i-1})_{M_i[1]}|}{|{\rm Hom}_{\A}(A_i,B_i)|}.\end{flalign*}
Thus, we have that
\begin{flalign*}
\mathscr{H}_{\osum A_i[i],\osum B_i[i]}^{\osum M_i[i]}=v^{\zsum\lr{\sum\limits_{k=0}^{m-1}(-1)^k\hat{A}_{i+k},\hat{B}_i}}\sum\limits_{[I_0],[I_1],\cdots,[I_{m-1}]\in{\rm Iso}(\A)}
\prod\limits_{i\in\mathbb{Z}_m}\frac{H_{I_i[1]\oplus A_i,B_i\oplus I_{i-1}[-1]}^{M_i}}{a_{I_i}}.\end{flalign*}
Hence, in the $m$-periodic derived Hall algebra $\mathcal {D}\mathcal {H}_m(\A)$,
\begin{equation*}u_{\bigoplus\limits_{i\in\mathbb{Z}_m}A_i[i]}u_{\bigoplus\limits_{i\in\mathbb{Z}_m}B_i[i]}=
\sum\limits_{[M_0],[M_1],[M_{m-1}]\in{\rm Iso}(\A)}
\mathscr{H}_{\osum A_i[i],\osum B_i[i]}^{\osum M_i[i]} u_{\bigoplus\limits_{i\in\mathbb{Z}_m}M_i[i]}.\end{equation*}
Therefore, we complete the proof.
\end{proof}

\section{Relations with Bridgeland's Hall algebras of $m$-periodic complexes}
In this section, we assume that $\A$ has enough projectives unless otherwise stated and relate the $m$-periodic extended derived Hall algebra $\mathcal {D}\mathcal {H}_m^{\rm e}(\A)$ with Bridgeland's Hall algebra of $m$-periodic complexes.
Bridgeland's Hall algebra of $2$-periodic complexes of $\A$ was introduced in \cite{Br}, and provides an entire realization of the corresponding quantum group. Inspired by the work of Bridgeland,  Chen and Deng \cite{ChenD} considered Bridgeland's Hall algebra $\mathcal{DH}_{\mathbb{Z}_m}(\A)$ of $m$-periodic complexes for any positive integer $m$.
For any objects $A,B,M,N\in\A$, set $$\gamma_{AB}^{MN}=\frac{a_Ma_N}{a_Aa_B}\sum\limits_{[I]\in{\rm Iso}(\A)}a_Ig_{L,M}^Ag_{N,I}^B.$$
From \cite{ZHC2}, we recall the algebra structure of Bridgeland's Hall algebra $\mathcal {D}\mathcal {H}_{\mathbb{Z}_m}(\A)$ for $m>2$ as follows:
\begin{proposition}\label{mBR} {\rm(\cite{ZHC2})}
If $m>2$. Then Bridgeland's Hall algebra
$\mathcal {D}\mathcal {H}_{\mathbb{Z}_m}(\A)$ is an associative and unital $\mathbb{C}$-algebra generated by the elements in $\{e_{A,i}~|~[A]\in{\rm Iso}(\A),~i\in \mathbb{Z}_m\}$ and $\{K_{\alpha,i}~|~\alpha\in K(\A),~i\in \mathbb{Z}_m\}$, and the following relations:
\begin{flalign}
&{K_{\alpha,i}} {K_{\beta,i}}={K_{\alpha+\beta,i}},~~~~
{K_{\alpha,i}} {K_{\beta,j}}=\begin{cases}
v^{( \alpha,\beta)}{K_{\beta,j}} {K_{\alpha,i}} \quad &\text{$i=j+1$},\label{5.1}\\
v^{-( \alpha,\beta)}{K_{\beta,j}} {K_{\alpha,i}} \quad &\text{$i=m-1+j$},\\
{K_{\beta,j}} {K_{\alpha,i}} & {\text{otherwise};}
\end{cases}\\
&{K_{\alpha,i}}e_{A,j}=\begin{cases}
v^{(\alpha,\hat{A})} e_{A,j}{K_{\alpha,i}} \quad &\text{$i=j$},\\
v^{-(\alpha,\hat{A})} e_{A,j}{K_{\alpha,i}} \quad &\text{$i=m-1+j$},\\
e_{A,j}{K_{\alpha,i}} & {\text{otherwise};}
\end{cases}\\
&e_{A,i}e_{B,i}=\sum\limits_{[M]}v^{\lr{\hat{A},\hat{B}}}g_{A,B}^Me_{M,i};\label{5.3}\\
&e_{A,i+1}e_{B,i}=\sum\limits_{[M],[N]}v^{\lr{\hat{A}-\hat{M},\hat{M}-\hat{N}}}\gamma_{AB}^{MN}K_{\hat{A}-\hat{M},i}e_{N,i}e_{M,i+1};\label{5.4}\\
&e_{A,i}e_{B,j}=e_{B,j}e_{A,i},~~i-j\neq0, 1~{\rm or}~m-1\label{5.5}.
\end{flalign}
\end{proposition}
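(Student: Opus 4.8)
The plan is to realise $\mathcal {D}\mathcal {H}_{\mathbb{Z}_m}(\A)$ explicitly as a localisation of the twisted Ringel--Hall algebra of the category $\mathcal{C}_{\mathbb{Z}_m}(\mathcal{P})$ of $\mathbb{Z}_m$-graded complexes of projective objects of $\A$, and then to extract the stated presentation by computing Hall products of stalk-type and contractible complexes. An object of $\mathcal{C}_{\mathbb{Z}_m}(\mathcal{P})$ is a tuple $(P_i,d_i)_{i\in\mathbb{Z}_m}$ with $d_{i+1}d_i=0$, and the Hall multiplication is twisted by the symmetrised Euler form of $\mathcal{C}_{\mathbb{Z}_m}(\mathcal{P})$. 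I would take $e_{A,i}$ to be the normalised class of a minimal complex whose only nonzero homology is $A$ in degree $i$, and $K_{\alpha,i}$ the class of the contractible complex supported in degrees $i,i+1$ attached to $\alpha\in K(\A)$; after localising at all contractible complexes the latter become invertible, which is exactly what permits the negative $K$-exponent appearing in \eqref{5.4}.

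The first substantive step is the generation claim together with a PBW-type spanning set. The key input is that every object of $\mathcal{C}_{\mathbb{Z}_m}(\mathcal{P})$ decomposes, up to isomorphism, as a direct sum of a complex with zero differential (recording the homology position by position) and a contractible complex. Feeding this decomposition into the Hall product, and using that Hall numbers factor across direct summands between which there are no extensions, shows that every basis element of $\mathcal {D}\mathcal {H}_{\mathbb{Z}_m}(\A)$ is, up to invertible $K$-factors, a product of the $e_{A,i}$, and yields an ordered spanning set indexed by one object of $\A$ in each degree together with the $K$-elements.

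I would then verify the five families of relations. The $K$--$K$ relations \eqref{5.1} and the $K$--$e$ relations follow directly from evaluating the Euler-form twist between a contractible complex in degrees $i,i+1$ and another contractible complex or a stalk in degree $j$; the trichotomy in the exponents (the cases $i=j+1$ and $i=m-1+j$) simply records that the homological pairing of the degrees $i$ and $j$ in $\mathbb{Z}_m$ is nonzero precisely when they are adjacent. Relation \eqref{5.3} is the single-degree product: the complexes have zero differential and no cross-degree interaction, so the computation collapses to the twisted Ringel--Hall product of $\A$, giving the coefficient $v^{\lr{\hat{A},\hat{B}}}g_{A,B}^M$. Relation \eqref{5.5} reflects that $\Hom$ and $\Ext^1$ between stalk complexes whose degrees differ by anything other than $0,1,m-1$ vanish, so the corresponding product is commutative.

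The main obstacle is the adjacent-degree relation \eqref{5.4}. Computing $e_{A,i+1}e_{B,i}$ forces one to analyse extensions of $m$-periodic complexes that genuinely straddle two consecutive degrees, where the differential is essential: the middle term of such an extension splits into a homology contribution, producing the structure constants $\gamma_{AB}^{MN}$ that repackage the weighted sums $\sum_{[I]}a_I g^A g^B$ of ordinary Hall numbers, and a contractible contribution, producing the factor $K_{\hat{A}-\hat{M},i}$. I expect this to be the crux of the proof: it relies on the Riedtmann--Peng formula to pass between extension counts and Hall numbers, and on Green's formula to reorganise the resulting sums into the closed form \eqref{5.4}. Finally, to see that \eqref{5.1}--\eqref{5.5} form a complete set of relations, I would argue by normal form: these relations suffice to rewrite any monomial in the $e_{A,i}$ and $K_{\alpha,i}$ into the ordered form obtained above, and comparison with the known basis of $\mathcal {D}\mathcal {H}_{\mathbb{Z}_m}(\A)$ shows that no further relations can hold.
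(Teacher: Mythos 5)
First, a point of comparison: the paper does not prove this proposition at all --- it is recalled verbatim from \cite{ZHC2} (building on \cite{Br} and \cite{ChenD}), so there is no in-paper argument to measure yours against. Your outline does follow the strategy of those sources: localise the twisted Hall algebra of $\mathbb{Z}_m$-graded complexes of projectives at the contractible complexes, take $e_{A,i}$ to be the normalised class of the minimal complex with homology $A$ concentrated in degree $i$ and $K_{\alpha,i}$ the classes of contractibles, verify the relations by direct Hall-number computations, and prove completeness of the presentation by reducing monomials to a normal form and comparing with the known vector-space decomposition $\bigotimes_{i\in\mathbb{Z}_m}{\mathcal {H}}_{\rm tw}^{\rm e}(\mathcal{A})\cong\mathcal {D}\mathcal {H}_{\mathbb{Z}_m}(\A)$ (the map $\nu$ of \cite[Proposition 4.4]{ChenD} used later in Section 5).

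Two steps need repair. The structural decomposition you invoke is misstated: an object of $\mathcal{C}_{\mathbb{Z}_m}(\mathcal{P})$ is \emph{not} in general a direct sum of a zero-differential complex and a contractible one; a complex of projectives with zero differential has projective homology in every degree, so your claim would force all homologies to be projective. The correct lemma (Bridgeland's decomposition, extended to $m$-periodic complexes in \cite{ChenD}) is that every object is isomorphic to $\bigoplus_{i}C_{A_i,i}\oplus K$ with $C_{A_i,i}$ the \emph{minimal} two-term complex obtained from a projective resolution $0\to P_1\to P_0\to A_i\to 0$ supported in the two adjacent degrees, and $K$ contractible; your own definition of $e_{A,i}$ already uses these minimal complexes, so only the decomposition statement needs fixing, but as written that step is false (the zero-differential picture belongs to the semi-derived setting of Gorsky and Lu--Peng, not to complexes of projectives). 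Second, relation \eqref{5.4} --- which you correctly identify as the crux --- is only announced, not derived: the passage from counting extensions of $C_{A,i+1}$ by $C_{B,i}$, whose middle terms are non-minimal and hence carry the contractible summand responsible for $K_{\hat{A}-\hat{M},i}$, to the closed form with the constants $\gamma_{AB}^{MN}$ is exactly the computation that constitutes the proof in \cite{ZHC2}, and ``I expect this to follow from Riedtmann--Peng and Green's formula'' cannot stand in for it. The verifications of \eqref{5.1}, the $K$--$e$ relations, \eqref{5.3} and \eqref{5.5}, and the normal-form argument for completeness, are correctly described.
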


For a general hereditary abelian category $\A$ which may not have enough projectives, inspired by the work \cite{Gorsky} of Gorsky, Lu and Peng \cite{LP} introduced the (twisted) semi-derived Ringel--Hall algebra of $2$-periodic complexes of $\A$ as a substitute of Bridgeland's Hall algebra of $2$-periodic complexes. Clearly, the construction of the (twisted) semi-derived Ringel--Hall algebra of $2$-periodic complexes can be easily generalised to $m$-periodic complexes (cf. \cite{Linji}).
According to \cite[Section 9.4]{Gorsky}, similarly, we have that Bridgeland's Hall algebra and the twisted semi-derived Ringel--Hall algebra of $m$-periodic complexes are isomorphic.
The twisted semi-derived Ringel--Hall algebra $^\imath\widetilde{\mathcal{H}}(\A)$ of $1$-periodic complexes, also called the $\imath$Hall algebra of $\A$, has been used in \cite{LWang} to give a realization of the corresponding $\imath$quantum group.

\begin{theorem}\label{briiso}
The $m$-periodic extended derived Hall algebra $\mathcal {D}\mathcal {H}_m^{\rm e}(\A)$ is isomorphic to Bridgeland's Hall algebra $\mathcal{DH}_{\mathbb{Z}_m}(\A)$ of $m$-periodic complexes.
\end{theorem}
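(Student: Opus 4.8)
The plan is to exploit the presentation of Bridgeland's algebra recorded in Proposition \ref{mBR}. Since for $m>2$ the algebra $\mathcal{DH}_{\mathbb{Z}_m}(\A)$ is generated by the $e_{A,i}$ and the $K_{\alpha,i}$ subject only to the relations \ref{5.1}--\ref{5.5}, it suffices to prescribe a candidate map on generators and check that every defining relation is preserved; bijectivity will then follow from Proposition \ref{mjiaoji}. Guided by the observation that \ref{5.3} is exactly the twisted Ringel--Hall product written in the $\mu$-normalization ($\mu_{[M]}=\frac{1}{a_M}u_{[M]}$), I would set
\[
\Phi:\mathcal{DH}_{\mathbb{Z}_m}(\A)\longrightarrow \mathcal{DH}_m^{\rm e}(\A),\qquad e_{A,i}\longmapsto \tfrac{1}{a_A}u_{A[i]},\quad K_{\alpha,i}\longmapsto K_{\alpha,i}.
\]
The cases excluded from Proposition \ref{mBR} would be handled separately: for $m=2$ one invokes that $\mathcal{DH}_2^{\rm e}(\A)$ coincides with the algebra of \cite{Zhang} (Remark \ref{12zhuji}), which is isomorphic to the Drinfeld double and hence, by \cite{Yan}, to Bridgeland's algebra; the $1$-periodic case is matched against the $\imath$Hall description. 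The rest of the argument concerns $m>2$.

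The relations not involving adjacent factors are routine bookkeeping. Relation \ref{5.1} and the $K$--$e$ commutation relation of Proposition \ref{mBR} are read off directly from \ref{Kjiaohuan}, \ref{kkjiaohuan} and \ref{kujiaohuan}: specializing those identities to single $K$- and $u$-elements reproduces the sign pattern ($+$ for $i=j+1$, $-$ for $i=m-1+j$, trivial otherwise). For \ref{5.3} I would compute $u_{A[i]}u_{B[i]}$ from \ref{yschengfa}: the five-term sequences \ref{fiveterm} attached to every slot $k\neq i$ force $I_k=M_k=0$ and, through slot $i+1$, also $I_i=0$, so the product collapses to $v^{\lr{\hat A,\hat B}}\sum_{[M]}H_{A,B}^M u_{M[i]}$ with $H_{A,B}^M=g_{A,B}^M\,a_Aa_B/a_M$; dividing by $a_Aa_B$ gives precisely $\Phi$ applied to the right-hand side of \ref{5.3}. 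For \ref{5.5}, the same five-term analysis with $A$ in slot $i$ and $B$ in a non-adjacent slot $j$ forces every $I_k=0$, whence $u_{A[i]}u_{B[j]}=u_{A[i]\oplus B[j]}=u_{B[j]}u_{A[i]}$.

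The heart of the matter is relation \ref{5.4}, the only place where genuine extensions, a $K$-element and a reordering all occur. I would evaluate $u_{A[i+1]}u_{B[i]}$ via \ref{yschengfa}: the sequences \ref{fiveterm} now force $I_k=0$ for all $k\neq i$, leaving a single summation variable $I_i$ together with $M_{i+1}\hookrightarrow A\twoheadrightarrow I_i$ and $I_i\hookrightarrow B\twoheadrightarrow M_i$, and the surviving dual derived Hall numbers $H_{I_i[1],B}^{M_i}$ and $H_{A,I_i[-1]}^{M_{i+1}}$. Rewriting these in terms of ordinary Hall numbers (via Lemmas \ref{jichuhallshu} and \ref{hallzhuan}) and summing over $I_i$ must reproduce $\gamma_{AB}^{M_{i+1}M_i}=\frac{a_{M_{i+1}}a_{M_i}}{a_Aa_B}\sum_{[I_i]}a_{I_i}g_{I_i,M_{i+1}}^{A}g_{M_i,I_i}^{B}$, while $\hat I_i=\hat A-\hat M_{i+1}$ identifies the emerging $K$-element as $K_{\hat A-\hat M,i}$; finally $u_{M_i[i]\oplus M_{i+1}[i+1]}=u_{M_i[i]}u_{M_{i+1}[i+1]}$ (again by the five-term analysis, $i$ and $i+1$ being forward-adjacent) lets me recognize $\Phi(e_{N,i}e_{M,i+1})$ on the right. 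The hard part will be the exponent bookkeeping: one must show that the Euler-form twist coming out of \ref{yschengfa}, combined with the twists produced by the $H\rightsquigarrow g$ conversions and the commutation of $K_{\hat I_i,i}$ into standard position via \ref{kujiaohuan}, collapses to exactly $v^{\lr{\hat A-\hat M,\hat M-\hat N}}$ for every $i\in\mathbb{Z}_m$. The most delicate instance is the wrap-around $i=m-1$, where slots $0$ and $m-1$ interact (this is precisely the mixing computation \ref{basis}) and the asymmetric ``slot $0$'' corrections built into \ref{exthalmul} are active; verifying that \ref{5.4} holds uniformly in $i$ is where these corrections must be shown to conspire correctly.

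Once all relations are verified, $\Phi$ is a well-defined algebra homomorphism, and bijectivity is immediate. By Proposition \ref{mjiaoji} the set $\{\prod_{i\in\mathbb{Z}_m}u_{A_i[i]}\prod_{i\in\mathbb{Z}_m}K_{\alpha_i,i}\}$ is a basis of $\mathcal{DH}_m^{\rm e}(\A)$, and $\Phi$ sends $\prod_{i\in\mathbb{Z}_m}e_{A_i,i}\prod_{i\in\mathbb{Z}_m}K_{\alpha_i,i}$ to the nonzero scalar multiple $\bigl(\prod_{i\in\mathbb{Z}_m}a_{A_i}^{-1}\bigr)\prod_{i\in\mathbb{Z}_m}u_{A_i[i]}\prod_{i\in\mathbb{Z}_m}K_{\alpha_i,i}$ of such a basis element. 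Since relations \ref{5.1}--\ref{5.5} allow every monomial in $\mathcal{DH}_{\mathbb{Z}_m}(\A)$ to be normal-ordered into this shape, these monomials span Bridgeland's algebra; their images being linearly independent forces them to be a basis and $\Phi$ to be a bijection between the two bases. Hence $\Phi$ is an isomorphism of algebras.
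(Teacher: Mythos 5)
Your plan is correct and follows essentially the same route as the paper: the same generators-and-relations map $e_{A,i}\mapsto \frac{1}{a_A}u_{A[i]}$, $K_{\alpha,i}\mapsto K_{\alpha,i}$ for $m>2$, the same verification of relations (\ref{5.1})--(\ref{5.5}) with (\ref{5.4}) handled via (\ref{yschengfa}) and Lemmas \ref{jichuhallshu}, \ref{hallzhuan}, and the same treatment of $m=1,2$ via the $\imath$Hall algebra and Yanagida/Drinfeld-double results. The only cosmetic difference is in injectivity, where the paper invokes the tensor decomposition of $\mathcal{DH}_{\mathbb{Z}_m}(\A)$ from \cite[Proposition 4.4]{ChenD} together with Corollary \ref{mcopy} in a commutative diagram, whereas you re-derive the spanning of normal-ordered monomials directly from the relations --- equivalent content; also note that your worry about the wrap-around slot $i=m-1$ in (\ref{5.4}) is unfounded, since with only one nonzero $I_i$ all the asymmetric cross-terms in (\ref{yschengfa}) vanish and the computation is uniform in $i$.
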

\begin{proof}
$(i)$ For $m=1$, firstly, Bridgeland's Hall algebra $\mathcal{DH}_{\mathbb{Z}_1}(\A)$ is isomorphic to the $\imath$Hall algebra $^\imath\widetilde{\mathcal{H}}(\A)$.
Secondly, by \cite[Proposition 3.10]{LWang}, it is easy to see that $\mathcal {D}\mathcal {H}_1^{\rm e}(\A)$ is also isomorphic to $^\imath\widetilde{\mathcal{H}}(\A)$. Thus, we have the algebra isomorphism $\mathcal {D}\mathcal {H}_1^{\rm e}(\A)\cong\mathcal{DH}_{\mathbb{Z}_1}(\A)$.

$(ii)$ For $m=2$, by \cite{Yan}, Bridgeland's Hall algebra $\mathcal{DH}_{\mathbb{Z}_2}(\A)$ is isomorphic to the Drinfeld double Hall algebra of $\A$, which is isomorphic to $\mathcal {D}\mathcal {H}_2^{\rm e}(\A)$ by \cite{Zhang}. So, we have the algebra isomorphism $\mathcal {D}\mathcal {H}_2^{\rm e}(\A)\cong\mathcal{DH}_{\mathbb{Z}_2}(\A)$.

$(iii)$ For $m>2$, consider the map $$\varphi: \mathcal {D}\mathcal {H}_{\mathbb{Z}_m}(\A)\longrightarrow\mathcal {D}\mathcal {H}_m^{\rm e}(\A), e_{M,i}\mapsto \frac{1}{a_M} u_{M[i]}, K_{\alpha,i}\mapsto K_{\alpha,i}.$$ By the equations (\ref{yschengfa})-(\ref{kkjiaohuan}), it is easy to see that the relations (\ref{5.1})-(\ref{5.3}) are preserved under $\varphi$.
For any $i\in\mathbb{Z}_m$, by (\ref{yschengfa}), we have that
\begin{flalign*}
u_{A[i+1]}u_{B[i]}=\sum\limits_{[I_i],[M_i],[M_{i+1}]\in{\rm Iso}(\A)}v^{\lr{\hat{M}_i-\hat{M}_{i+1},\hat{I}_i}}\frac{H_{I_i[1],B}^{M_i}H_{A,I_i[-1]}^{M_{i+1}}}{a_{I_i}}u_{M_i[i]\oplus M_{i+1}[i+1]}K_{\hat{I}_i,i}.
\end{flalign*}
Also by (\ref{yschengfa}), we have that $u_{M_i[i]}u_{M_{i+1}[i+1]}=u_{M_i[i]\oplus M_{i+1}[i+1]}$.
By Lemmas \ref{hallzhuan} and \ref{jichuhallshu}, we get that
$$H_{I_i[1],B}^{M_i}=a_{I_i}F_{M_i,I_i}^B~\text{and}~H_{A,I_i[-1]}^{M_{i+1}}=a_{I_i}F_{I_i,M_{i+1}}^A.$$
By (\ref{kujiaohuan}), we have that
$$K_{\hat{I}_i,i}u_{M_i[i]\oplus M_{i+1}[i+1]}=v^{(\hat{I}_i,\hat{M}_i-\hat{M}_{i+1})}u_{M_i[i]\oplus M_{i+1}[i+1]}K_{\hat{I}_i,i}.$$
Thus, we obtain that
\begin{flalign*}
u_{A[i+1]}u_{B[i]}=\sum\limits_{[I_i],[M_i],[M_{i+1}]\in{\rm Iso}(\A)}v^{\lr{\hat{I}_i,\hat{M}_{i+1}-\hat{M}_{i}}}a_{I_i}F_{I_i,M_{i+1}}^AF_{M_i,I_i}^BK_{\hat{I}_i,i}u_{M_i[i]}u_{M_{i+1}[i+1]}.
\end{flalign*}
Noting that $\hat{I}_i=\hat{A}-\hat{M}_{i+1}$, we get that
\begin{flalign*}
u_{A[i+1]}u_{B[i]}=\sum\limits_{[M_i],[M_{i+1}]\in{\rm Iso}(\A)}v^{\lr{\hat{A}-\hat{M}_{i+1},\hat{M}_{i+1}-\hat{M}_{i}}}\frac{a_Aa_B}{a_{M_{i+1}}a_{M_{i}}}\gamma_{AB}^{M_{i+1}M_i}K_{\hat{A}-\hat{M}_{i+1},i}u_{M_i[i]}u_{M_{i+1}[i+1]}.
\end{flalign*}
Hence, the relation (\ref{5.4}) is preserved under $\varphi$.

By (\ref{yschengfa}), it is easy to see that for any $i,j\in\mathbb{Z}_m$ such that $i-j\neq0, 1~{\rm or}~m-1$,
$$u_{A[i]}u_{B[j]}=u_{B[j]}u_{A[i]}=u_{A[i]\oplus B[j]}.$$
So, the relation (\ref{5.5}) is also preserved under $\varphi$. Hence, the map $\varphi$ is an algebra homomorphism. By Proposition \ref{mjiaoji}, the map $\varphi$ is surjective.

By \cite[Proposition 4.4]{ChenD}, for each $i\in\mathbb{Z}_m$, the map $\kappa_i:{\mathcal {H}}_{\rm{tw}}^{\rm e}(\mathcal{A})\rightarrow\mathcal {D}\mathcal {H}_{\mathbb{Z}_m}(\A), u_{[M]}K_\alpha\mapsto a_Me_{M,i}K_{\alpha,i}$ is an embedding of algebras, and the map \begin{equation}\begin{aligned}
&\nu:\bigotimes\limits_{i\in\mathbb{Z}_m}{\mathcal {H}}_{\rm{tw}}^{\rm e}(\mathcal{A})\longrightarrow \mathcal {D}\mathcal {H}_{\mathbb{Z}_m}(\A),
&\bigotimes\limits_{i\in\mathbb{Z}_m}x_i\longmapsto \prod\limits_{i\in\mathbb{Z}_m}\kappa_i(x_i)
\end{aligned}\end{equation}
is an isomorphism of vector spaces. Then by the following commutative diagram
\begin{equation}
\xymatrix{\mathcal {D}\mathcal {H}_{\mathbb{Z}_m}(\A)\ar@{->>}[r]^-\varphi\ar@{->}[d]_-{\nu^{-1}}&\mathcal {D}\mathcal {H}_m^{\rm e}(\A)\\
\bigotimes\limits_{i\in\mathbb{Z}_m}{\mathcal {H}}_{\rm{tw}}^{\rm e}(\mathcal{A})\ar@{->}[ru]_-\mu&}
\end{equation}
we get that $\varphi$ is injective. Therefore, we complete the proof.
\end{proof}
\begin{remark}
By Theorem \ref{briiso}, we can say that the formula (\ref{exthalmul}) in the $m$-periodic extended derived Hall algebra $\mathcal {D}\mathcal {H}_m^{\rm e}(\A)$ provides a global, unified and explicit characterization for the algebra structure of Bridgeland's Hall algebra of periodic complexes.
\end{remark}


\end{document}